\newcommand{\Mono}{\ar@{>->}[r]}
\newcommand{\Epi}{\ar@{->>}[r]}
\newcommand{\Z}{{\mathbb Z}}
\newcommand{\C}{{\mathbb C}}
\newcommand{\N}{{\mathbb N}}
\newcommand{\Q}{{\mathbb Q}}
\newcommand{\F}{{\mathbb F}}
\newcommand{\Fp}{{\mathbb{F}_p}}
\newcommand{\pcom}{_{p}^{\wedge}}
\newcommand{\padic}{\Z_p}
\newcommand{\iso}{\cong}
\newcommand{\con}{\equiv}
\newcommand{\hocolim}{\operatornamewithlimits{hocolim}}
\newcommand{\Hom}{\operatorname{Hom}\nolimits}
\newcommand{\Coker}{\operatorname{Coker}\nolimits}
\newtheorem{Thm}{Theorem}[section]
\newtheorem{Prop}[Thm]{Proposition}
\newtheorem{Lem}[Thm]{Lemma}
\theoremstyle{remark}
\newtheorem{Rm}[Thm]{Remark}
\newenvironment{psmatrix}{\left(\begin{smallmatrix}}{\end{smallmatrix}\right)}
\def\Matrix#1#2#3#4{\left(\begin{matrix}#1&#2\\#3&#4\end{matrix}\right)}
\def\psMatrix#1#2#3#4{\begin{psmatrix}#1&#2\\#3&#4\end{psmatrix}}
\newcommand{\kx}{k^{\times}}
\newcommand{\G}{G(k)}
\title[Cohomology of Kac-Moody groups]{Cohomology of Kac-Moody groups over a finite field}
\author{Jaume Aguad\'{e}}\author{Albert Ruiz}
\address{Departament de Matem\`atiques\\ Universitat Aut\`onoma de Barcelona\\ 08193 Cerdanyola del Vall\`es\\ Spain.}
\email{aguade@mat.uab.cat}\email{Albert.Ruiz@uab.cat}
\thanks{The authors are partially supported by grants FEDER-MICINN MTM2010-20692 and 2009SGR-1092.}
\subjclass[2010]{55R35, 20G44, 81R10}
\keywords{Cohomology; classifying spaces; Kac-Moody groups}
\begin{document}
\begin{abstract}We compute the mod $p$ cohomology algebra of a family of infinite discrete Kac-Moody groups of rank two defined over
finite fields of characteristic different from $p$.\end{abstract} \maketitle

\section{Introduction}\label{Introduction}

A functorial definition of discrete Kac-Moody groups over any commutative ring was established by Tits in 1987 (\cite{Tits}) and
there is now a great deal of interest in these objects, as illustrated by the recent monograph by R\'{e}my (\cite{Remy}), 
and by some of the references it
contains. The work of Kac-Peterson (\cite{KacPete}) and Kitchloo (\cite{tesiNitu}) on the classifying spaces
$BK(\C)$ of the topological Kac-Moody groups over the complex field were the starting point for the study of these objects from
the point of view of the so-called \emph{homotopical group theory} (cf.\ the address by Grodal at the 2010 ICM, \cite{Grodal}).
Since then, some significant progress has been made in this area. For instance, we know that the mod $p$ cohomology of $BK(\C)$
is noetherian (\cite{CarlesNitu}). In the case of rank two, the mod $p$ cohomology of $BK(\C)$ has been explicitely computed
(\cite{Kyoto}), and the self maps of $BK(\C)$ have been described and classified (\cite{Advances}).

The main results in this paper (Proposition \ref{trivial} and Theorem \ref{Main}) provide an explicit description of
the cohomology algebra $H^*(BG_{\mathcal{D}}(k);\Fp)$ for $G_{\mathcal{D}}(k)$ a discrete infinite Kac-Moody group of
rank two over a finite field $k$ of characteristic different from $p$ (under some restrictions, as stated in
\ref{Main}). To arrive to this result we need detailed descriptions of the root system (section \ref{roots}), the
parabolic subgroups and the L\'{e}vi subgroups (section \ref{parabolicsec}) of $G_{\mathcal{D}}(k)$. The modular
invariant theory of the dihedral group plays also a main role in our computation (sections \ref{Weylsec} and
\ref{Coker}). Some of these partial steps in the main result may also have interest in their own.

In the classic case of a finite Weyl group, Friedlander (\cite{Friedlander}) discovered that there is a homotopy equivalence
between
the $p$-completion of the classifying space of the Chevalley group over the field $\F_q$ and the homotopy fixed points of an
unstable Adams map $\psi^q$ defined on the classifying space of the corresponding compact connected Lie group. In a final section
of this paper we see that this result does not generalize to the Kac-Moody group case (infinite Weyl group). This suggests a
discrepancy between the algebraic and the homotopical definitions of Kac-Moody groups over finite fields, in the case of an
infinite Weyl group.

We want to acknowledge some helpful correspondence and conversations with Alejandro Adem and Bertrand R\'{e}my on some
details of our work. The first author is grateful to the Pacific Institute for the Mathematical Sciences in Vancouver
for its hospitality during the preparation of this paper.

\begin{Rm}
After finishing this paper, we became aware of results in J.~Foley's PhD Thesis (\cite{Foley}), having a 
significant overlap with the results of this work and, in some points, reaching beyond 
(cf. Remark \ref{acyclicgeneral}).
\end{Rm}

\section{Kac-Moody groups over a field}\label{Tits}

In this preliminary section we recall the basic notions and notations of Kac-Moody groups over a field. Our main references are
the original paper by Tits (\cite{Tits}) and the book by R\'{e}my (\cite{Remy}).

A Kac-Moody group functor depends on a set of data $\mathcal{D}$ consisting of a $n\times n$ generalized Cartan matrix $A$, a
free abelian group $\Lambda$, and elements $\alpha_1,\ldots,\alpha_n\in\Lambda$ and
$h_1,\ldots,h_n\in\Lambda^\vee=\Hom(\Lambda,\Z)$ such that $\langle h_i,\alpha_j\rangle=A_{ij}$. From these data
$\mathcal{D}$ one can construct a functor $G_{\mathcal{D}}(-)$ from commutative rings to groups which coincides with the
classic Chevalley-Demazure functor if $A$ is a Cartan matrix. When restricted to fields, the functor
$G_{\mathcal{D}}(-)$ can be characterized by a small set of axioms.

Let $k$ be a field and let $k^+$ and $\kx$ be the additive and multiplicative groups of $k$, respectively. The abelian group
$T=\Hom(\Lambda,\kx)$ plays the role of a maximal torus in $G_{\mathcal{D}}(k)$ through a monomorphism $\eta\colon T\to
G_{\mathcal{D}}(k)$. For each $1\le i\le n$ there is a homomorphism $\phi_i\colon SL_2(k)\to G_{\mathcal{D}}(k)$ as well as two
monomorphisms $x_i^+,x_i^-\colon k^+\to G_{\mathcal{D}}(k)$ such that $x_i^+(r)=\phi_i\psMatrix1r01$ and
$x_i^-(r)=\phi_i\psMatrix10{-r}1$. The kernel of each $\phi_i$ is central in $SL_2(k)$. The group $G_{\mathcal{D}}(k)$ is
generated by $T$ and the images of all $\phi_i$. Each $\phi_i$ sends diagonal matrices into the maximal torus $T$ as follows: for
any $r\in\kx$ we have
$$
\hspace*{5cm}\phi_i\psMatrix r00{r^{-1}}(\lambda)=r^{h_i(\lambda)}.\hspace*{5.5cm}(\ast)
$$
$T$ normalizes $U_i^+=x_i^+(k^+)$ and $U_i^-=x_i^-(k^+)$ by the formula
$$
\hspace*{4.5cm}t\,x_i^\pm(r)\,t^{-1}=x_i^\pm(t(\alpha_i)^{\pm1}r)\hspace*{5cm}(\ast\ast)
$$
for any $r\in k$.

The equations $\omega_i(a_j)=a_j-A_{ij}a_i$, $i,j=1,\ldots,n$ define an action of a group $W$ (the Weyl group) on $\Q
a_1\oplus\cdots\oplus\Q a_n$. The orbit $\Phi=W\{a_1,\ldots,a_n\}$ is the root system of $G_{\mathcal{D}}(k)$. Every
root is an integral linear combination of $\{a_1,\ldots,a_n\}$ with coefficients all positive or all negative. We talk
of positive roots $\Phi^+$ and negative roots $\Phi^-$. $W$ is a finite group if and only if the matrix $A$ is a Cartan
matrix (i.\ e.\ $A$ is the product of a diagonal matrix and a positive definite symmetric matrix). The root system
depends only on the matrix $A$ and not on the full data $\mathcal{D}$. The Weyl group $W$ acts also on $\Lambda$ (and on
$T$) by by $\omega_i(\lambda)=\lambda-\langle h_i,\lambda\rangle\,\alpha_i$, for $\lambda\in\Lambda$ and
$i=1,\ldots,n$.

$G_{\mathcal{D}}(k)$ is a group with a double $BN$-pair. There are Borel subgroups $B^+$, $B^-$, standard parabolic subgroups
$P_I^+$, $P_I^-$ for any $I\subset\{1,\ldots,n\}$, root groups $U_a$ for any $a\in\Phi$, and all the rich theory of double
BN-pairs applies.

From a topological point of view, one of the most remarkable properties of Kac-Moody groups with infinite Weyl group,
which does not hold in the classic case of a finite Weyl group, is the following.

\begin{Thm}\label{Mitchell}
(\cite{Mitchell}) Let $G$ be a Kac-Moody group with an infinite Weyl group and let $\mathcal{C}$ denote the poset of proper
standard parabolic subgroups $P_I$ of $G$. Then there is a homotopy equivalence $BG\simeq\hocolim_{\mathcal{C}}BP_I$.\qed
\end{Thm}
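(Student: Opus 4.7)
The plan is to realize $BG$ as a Borel construction on a contractible $G$-space and then decompose that Borel construction along $G$-orbits of cells. This reduces the theorem to two inputs: the construction of the building of $G$ as a $G$-CW complex, and its contractibility when $W$ is infinite.

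First I would invoke the BN-pair structure on $G$ to construct its (positive) Tits building $\Delta$: the simplices of $\Delta$ are in bijection with cosets $gP_I$ for $I\subsetneq\{1,\ldots,n\}$, face relations reverse coset inclusion, and $G$ acts simplicially with the stabilizer of the simplex labelled by $gP_I$ equal to $gP_Ig^{-1}$. In particular, each $G$-orbit of simplices contains a unique representative fixed by the standard parabolic $P_I$, and the face poset of a fundamental domain is exactly $\mathcal{C}$. This is formal from the theory of BN-pairs and is independent of whether $W$ is finite or infinite.

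The critical step, and the main obstacle, is to show that $\Delta$ (in an appropriate geometric realization) is contractible when $W$ is infinite. The combinatorial realization will not be contractible in general, so one must use Davis's metric realization: apartments are then copies of the Davis complex of the Coxeter group $W$, which is CAT(0) and thus contractible precisely because $W$ is infinite, and the building is glued from its apartments along residues in such a way that the CAT(0) property (and hence contractibility) is inherited. Once this is in hand, the equivariant projection $\Delta\to\ast$ is a $G$-equivariant homotopy equivalence, so passing to Borel constructions yields
$$
BG \simeq EG\times_G \ast \simeq EG\times_G \Delta.
$$

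Finally I would apply the standard homotopy-colimit decomposition of the Borel construction of a $G$-CW complex. Since the cells of $\Delta$ are permuted freely within each $G$-orbit, and the orbits together with their face relations give the poset $\mathcal{C}$ (with $I\subseteq J$ corresponding to inclusion of stabilizers $P_I\subseteq P_J$ arising from the opposite face relation on simplices), one gets
$$
EG\times_G \Delta \simeq \hocolim_{\mathcal{C}}\, EG\times_G (G/P_I) \simeq \hocolim_{\mathcal{C}} BP_I,
$$
which, combined with the previous equivalence, is the desired statement. All of the work is contained in the contractibility of $\Delta$; the rest is a formal unwinding of the BN-pair and of the $G$-CW decomposition.
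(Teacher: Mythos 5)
The paper does not prove this statement at all --- it is quoted from \cite{Mitchell} with an immediate \qed\ --- so your task was to reconstruct the argument, and your overall architecture (realize $BG$ as $EG\times_G\Delta$ for a contractible $G$-complex built from the $BN$-pair, then apply the isotropy decomposition of the Borel construction) is the standard and correct one. Two points need repair, though. First, the phrase ``the cells of $\Delta$ are permuted freely within each $G$-orbit'' is not what you need and is in fact false: the stabilizer of the simplex labelled $gP_I$ is $gP_Ig^{-1}$, so the action on each orbit is transitive with large isotropy. What the decomposition $EG\times_G\Delta\simeq\hocolim_{\mathcal C}EG\times_G(G/P_I)$ actually requires is that stabilizers fix their simplices pointwise (guaranteed by the type function on the building) and that $\mathcal C$ is the face poset of a strict fundamental domain; then $EG\times_G(G/P_I)\simeq BP_I$ and the formula follows.

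Second, and more substantively: the $G$-complex whose orbit poset is exactly $\mathcal C$ (all proper $I$) is $\hocolim_{\mathcal C}G/P_I$, whereas the Davis realization you invoke for the CAT(0) contractibility argument has chambers indexed only by the \emph{spherical} subsets $I$ (those with $W_I$ finite). The two posets coincide in the rank-two situation of this paper, but not for a general Kac--Moody group, so as written your contractibility input proves the decomposition over the spherical poset, and an additional cofinality step (e.g.\ $BP_I\simeq\hocolim_{J\subsetneq I}BP_J$ for non-spherical $I$, by induction on rank) is needed to recover the statement over $\mathcal C$. Mitchell's own argument sidesteps this and is more elementary: filter the chambers by the word length of $w\in W$ and observe that attaching the chamber of $w$ cones off the subcomplex indexed by $I(w)=\{s:\ell(ws)<\ell(w)\}$, which is a cone (hence contractible) exactly because $I(w)\subsetneq S$ for every $w$, i.e.\ because an infinite Coxeter group has no longest element. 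That is precisely where the hypothesis ``$W$ infinite'' enters; in the rank-two case used in the paper it reduces to the assertion that the coset graph of $P_1\leftarrow B\rightarrow P_2$ is a tree, equivalently $G=P_1\ast_BP_2$, which is the form in which the paper applies the theorem.
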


The purpose of this paper is to compute $H^*(BG_{\mathcal{D}}(k);\Fp)$ when the free
abelian group $\Lambda$ is of rank two, $k$ is a finite field, the Weyl group of $G_{\mathcal{D}}(k)$ is infinite and
$p$ is an odd prime different from the characteristic of the field $k$.

Throughout this paper we fix the following notations:

\begin{enumerate}
\item $p,\ell$ are different primes, $p$ is odd and $k$ is a finite field of order $q$ and characteristic $\ell$.
\item $a,b$ are positive integers such that $ab\ge4$ and $A$ is the generalized Cartan matrix $$A=\Matrix2{-a}{-b}2.$$
Without loss of generality, we assume $a\le b$.
\item $n_i,m_i,s_i,t_i$ for $i=1,2$ are positive integers such that
$$\Matrix{s_1}{t_1}{s_2}{t_2}\Matrix{n_1}{n_2}{m_1}{m_2}=A.$$
We denote $\Delta=s_1t_2-s_2t_1$, $\nabla=n_1m_2-n_2m_1$, so that $\Delta\nabla=4-ab$.
\item $\Lambda$ is a free abelian group of rank two $\Lambda=\Z e_1\oplus\Z e_2$ and $\Lambda^\vee=\Z
e_1^\vee\oplus\Z e_2^\vee$ is its $\Z$-dual.
\item $\alpha_1,\alpha_2\in\Lambda$ and $h_1,h_2\in\Lambda^\vee$ are the elements $\alpha_i=n_ie_1+m_ie_2$,
$h_i=s_ie_1^\vee+t_ie_2^\vee$, $i=1,2$.
\item $G$ is the Kac-Moody group $G_{\mathcal{D}}(k)$ where $\mathcal{D}$ consists of the matrix $A$, the lattice
$\Lambda$ and the elements $\{\alpha_1,\alpha_2\}$ and $\{h_1,h_2\}$.
\end{enumerate}

In particular, we want to emphasize that through all this paper the prime $p$ is always assumed to be odd and different
from the characteristic of the field $k$.

\section{The root system in rank two}\label{roots}

In this section we will study the root system $\Phi$ associated to the generalized Cartan matrix $A$. Recall that $\Phi$ is the
orbit of the basis vectors of $\Q^2$ under the action of the infinite dihedral group $W=\langle\omega_1,\omega_2\rangle$ acting
as $$\omega_1=\Matrix{-1}a01,\quad\omega_2=\Matrix10b{-1}.$$ This root system $\Phi$ has been studied in \cite{Tits-Belgique},
\cite{KacPete}, \cite{tesiNitu}, \cite{tesiAlbert}. The lattice $\Lambda$ and the elements
$\alpha_1,\alpha_2\in\Lambda$, $h_1,h_2\in\Lambda^\vee$ do not play any role in this section.

If we write $\tau=\omega_1\omega_2$, then the matrices in $W$ can be described as follows (cf.\ \cite{tesiAlbert},
\cite{tesiNitu})
$$\tau^n=\Matrix{d_{2n+1}}{-d_{2n}}{c_{2n}}{-c_{2n-1}},\quad
\tau^n\omega_1=\Matrix{-d_{2n+1}}{d_{2n+2}}{-c_{2n}}{c_{2n+1}}$$ where the integers $c_n$, $d_n$ are defined inductively
for any
integer $n$ as follows:
\begin{align*}
c_0=d_0=0,\quad &c_1=d_1=1,\\ c_{n+1}=bd_n-c_{n-1},\quad &d_{n+1}=ac_n-d_{n-1}.\end{align*}

The following proposition lists many properties of these integers that we will use in this section.

\begin{Prop}\label{longlist}
\noindent
\begin{enumerate}
\item $c_{2n+1}=d_{2n+1}$, $bd_{2n}=ac_{2n}$.
\item $c_{2n}\equiv0\,( b)$, $c_{2n+1}\equiv(-1)^n\,( b)$, $d_{2n}\equiv0\,(a)$,
$d_{2n+1}\equiv(-1)^n\,(a)$.
\item $c_{-n}=-c_n$, $d_{-n}=-d_n$.
\item If $a=1$ and $b=4$, then $d_{2n}=n$, $c_{2n}=4n$, $c_{2n+1}=d_{2n+1}=2n+1$.
\item If $ab>4$, let $\zeta>1$ be a real root of $X^4-(ab-2)X^2+1$. Then
$$c_n=\begin{cases}\displaystyle\frac{\zeta^{2n}-1}{\zeta^{n-1}(\zeta^2-1)}\quad\text{ $n$ odd}\\ \\
\displaystyle\frac{b(\zeta^{2n}-1)}{\zeta^{n-2}(\zeta^4-1)}\quad\text{ $n$ even}\end{cases}$$
\item $c_n,d_n>0$ for $n>0$.
\item For $ab>4$ the function $f(n)=c_{2n+1}-c_{2n-1}$, $n>0$, is strictly increasing.
\item If $a,b>1$ then the sequences $\{c_n\}$, $\{d_n\}$ are strictly increasing.
\item The sequences $\{c_{2n}\}$, $\{d_{2n}\}$, $\{c_{2n+1}\}$, $\{d_{2n+1}\}$ are strictly increasing.
\item If $(a,b)\ne(1,4)$ then $d_{2n}<d_{2n+1}<d_{2n+4}$ for $n\ge0$.
\item If $a=1$ then for $n>1$ we have $d_{2n+1}<\min\{(b-1)d_{2n},(b-2)d_{2n-1}\}$.
\end{enumerate}
\end{Prop}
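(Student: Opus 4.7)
The plan is to derive a second-order linear recursion on the even-indexed and odd-indexed subsequences, use it (together with a closed form in the $ab>4$ case) to handle the positivity and monotonicity statements, and treat the first-order identities and congruences by direct induction on the given recursion.

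First I would prove items (1), (2), (3) by induction on $n$ from the defining recursion. For (1), one inducts simultaneously on $c_{2n+1}=d_{2n+1}$ and $bd_{2n}=ac_{2n}$: assuming both hold up to index $2n-1$, the identities $bd_{2n}=b(ac_{2n-1}-d_{2n-2})=abd_{2n-1}-bd_{2n-2}=abd_{2n-1}-ac_{2n-2}=ac_{2n}$ and $d_{2n+1}=ac_{2n}-d_{2n-1}=bd_{2n}-c_{2n-1}=c_{2n+1}$ close the induction. Item (2) follows immediately by reducing the recursion $c_{n+1}+c_{n-1}=bd_n$ modulo $b$ to get $c_{n+1}\equiv -c_{n-1}\ (b)$ (and analogously mod $a$), then unrolling from the base values. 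Item (3) is obtained by observing that the same recursion rewritten as $c_{n-1}=bd_n-c_{n+1}$ is invariant under the sign flip $c_n\mapsto-c_{-n}$, $d_n\mapsto-d_{-n}$.

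Next, combining $c_{n+2}+c_n=bd_{n+1}$ with $d_{n+1}+d_{n-1}=ac_n$ and $c_n+c_{n-2}=bd_{n-1}$ yields the fundamental second-order recursion
$$c_{n+2}=(ab-2)c_n-c_{n-2},\qquad d_{n+2}=(ab-2)d_n-d_{n-2},$$
whose characteristic polynomial is precisely $X^4-(ab-2)X^2+1$. Item (4) is then an immediate check on the small recursion with $(a,b)=(1,4)$ (where the polynomial becomes $(X^2-1)^2$, giving polynomial growth). For item (5), when $ab>4$ the polynomial has four distinct real roots $\zeta,\zeta^{-1},-\zeta,-\zeta^{-1}$ with $\zeta>1$, so $c_{2n+1}=A\zeta^{2n+1}+B\zeta^{-(2n+1)}$ with $A,B$ determined by $c_1=1$, $c_3=ab-1$; solving gives the stated formula, and analogously for $c_{2n}$ using $c_0=0$, $c_2=b$.

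Items (6)--(9) then follow from (4) and (5): in the $ab>4$ case, positivity of each subsequence is immediate from the closed form because $\zeta>1$; for item (7), using the second-order recursion,
$$(c_{2n+3}-c_{2n+1})-(c_{2n+1}-c_{2n-1})=(ab-4)c_{2n+1}>0,$$
which gives strict monotonicity of $f$. Item (9) for each of the four subsequences follows either from (7) (for $c_{2n+1}$) combined with $c_{2n+1}-c_{2n-1}\ge c_3-c_1=ab-2>0$, or from the analogous identity for $d$, or by direct verification in the $(1,4)$ case. Item (8) under the hypothesis $a,b>1$ then reduces to showing $c_{n+1}>c_n$ and $d_{n+1}>d_n$, which follows by induction from the first-order recursion combined with item (1): e.g.\ $c_{2n+1}-c_{2n}=bd_{2n}-c_{2n-1}-c_{2n}=bd_{2n}-c_{2n}-c_{2n-1}=(b-1)\frac{b}{a}d_{2n}\cdot\!\!\ldots$, simplified using $ac_{2n}=bd_{2n}$.

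The hardest steps are items (10) and (11), where the asymmetry between $a$ and $b$ (with $a\le b$) matters and the $a=1$ case must be isolated. For (10), I would split into $(a,b)=(1,b)$ with $b\ge 5$ and $a\ge 2$ (equivalently $ab\ge 4$ and $(a,b)\ne(1,4)$); in each regime the inequality $d_{2n}<d_{2n+1}$ comes from $d_{2n+1}-d_{2n}=ac_{2n}-d_{2n-1}-d_{2n}$ together with growth estimates from the closed form, while $d_{2n+1}<d_{2n+4}$ uses (9) plus $d_{2n+4}-d_{2n+1}\ge d_{2n+2}-d_{2n+1}$ and an explicit gap estimate via $\zeta$. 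For (11), when $a=1$ the closed form specializes to $d_{2n}=(\zeta^{2n}-\zeta^{-2n})/(b(\zeta^2-\zeta^{-2}))\cdot b$-adjusted expressions; the inequalities $d_{2n+1}<(b-1)d_{2n}$ and $d_{2n+1}<(b-2)d_{2n-1}$ reduce, using $d_{2n+1}=c_{2n}-d_{2n-1}=bd_{2n}-d_{2n-1}$, to $(b-1)d_{2n}>bd_{2n}-d_{2n-1}$, i.e.\ $d_{2n-1}>d_{2n}$, and $(b-2)d_{2n-1}>bd_{2n}-d_{2n-1}$, i.e.\ $(b-1)d_{2n-1}>bd_{2n}$. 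These require the sharp ratio bound $d_{2n}/d_{2n-1}<(b-1)/b$ for $n>1$, which I would establish by induction using the second-order recursion: the base case $n=2$ is a direct computation, and the inductive step follows because the ratio $d_{2n+2}/d_{2n+1}$ tends to $\zeta^{-2}$ from above when $a=1$, while $(b-1)/b$ sits above $\zeta^{-2}$ (this last comparison is where I expect the main technical obstacle, as it requires estimating the root $\zeta$ of $X^4-(b-2)X^2+1$ against $b/(b-1)$).
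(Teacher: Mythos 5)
Your overall strategy matches the paper's: items (1)--(3) by induction on the defining recursion, the closed form (5) via the characteristic polynomial $X^4-(ab-2)X^2+1$, and the monotonicity statements from a mix of the first-order recursions, item (1), and the closed form. Your proof of (7) is actually cleaner than the paper's (which computes $f$ from (5) and checks $f'>0$): the telescoped second difference $(c_{2n+3}-c_{2n+1})-(c_{2n+1}-c_{2n-1})=(ab-4)c_{2n+1}>0$ together with $f(1)=ab-2>0$ settles (7) and feeds directly into (9). Items (1)--(6), (8), (9) are fine modulo the garbled computation at the end of your paragraph on (8); the intended argument is simply $c_{n+1}=bd_n-c_{n-1}$, which equals $bc_n-c_{n-1}$ ($n$ odd, by (1)) or $ac_n-c_{n-1}$ ($n$ even, by (1)), and in either case is $\ge 2c_n-c_{n-1}>c_n$ by induction when $a,b>1$.

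There are, however, two genuine soft spots. First, in (10) your proposed intermediate inequality $d_{2n+4}-d_{2n+1}\ge d_{2n+2}-d_{2n+1}$ is a dead end: for $a=1$ one has $d_{2n+2}=d_{2n+1}-d_{2n}<d_{2n+1}$, so the right-hand side is \emph{negative} and the inequality carries no information. The entire content of $d_{2n+1}<d_{2n+4}$ is the ``explicit gap estimate via $\zeta$'' that you leave undone; the paper carries it out by noting $\zeta^4=(b-2)\zeta^2-1\ge 3\zeta^2-1>\zeta^2+1$, deducing $\zeta^{2k+6}-1>\zeta^2(\zeta^{2k}-1)(\zeta^2+1)$ for $k=2n+1$, and translating this back through (5). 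Second, in (11) you correctly reduce the two inequalities to $d_{2n}<d_{2n-1}$ and $bd_{2n}<(b-1)d_{2n-1}$, but the ``sharp ratio bound'' you then flag as the main technical obstacle is illusory: for $a=1$ the recursion gives $d_{2n}=d_{2n-1}-d_{2n-2}$, which yields the first inequality at once (for $n>1$), and substituting it into the second turns $bd_{2n}<(b-1)d_{2n-1}$ into $d_{2n-1}<bd_{2n-2}$, which is immediate from $d_{2n-1}=bd_{2n-2}-d_{2n-3}$ and positivity. (Equivalently, the paper writes $d_{2n+1}=(b-2)d_{2n-1}-d_{2n-3}$ directly.) No induction on ratios or comparison of $\zeta^{-2}$ with $(b-1)/b$ is needed --- and as an aside, your asymptotic claim is off: $d_{2n+2}/d_{2n+1}\to\zeta^2/(\zeta^2+1)$, not $\zeta^{-2}$.
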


\begin{proof}
Most of these properties are either evident or can be easily proven by induction. To prove (7), use (5) to obtain a
formula for $f(n)$ and then check that $(d/dx)f(x)>0$ (cf.\ \cite{tesiAlbert}). To prove (8) by induction notice that if
$n$ is odd then
$$c_{n+1}=bd_n-c_{n-1}=bc_n-c_{n-1}\ge2c_n-c_{n-1}>c_n$$ and if $n$ is even then
$$c_{n+1}=bd_n-c_{n-1}=ac_n-c_{n-1}\ge2c_n-c_{n-1}>c_n$$ and similarly with $\{d_n\}$.

To prove (9) it is enough to consider the case $a=1$. If $b=4$, the result follows from (4). If $b\ge5$ we know from (7)
that $\{c_{2n+1}\}$ is strictly increasing. Then, $d_{2n}=(c_{2n+1}+c_{2n-1})/b$ and $c_{2n}=bd_{2n}$ are also strictly
increasing.

In (10) we can also assume $a=1$ and $b\ge5$. If we write $d_{2n+1}-d_{2n}$ a a function of $\zeta$ using (5) then we
see that $d_{2n}<d_{2n+1}$. We have $$\zeta^4=(b-2)\,\zeta^2-1\ge3\,\zeta^2-1>\zeta^2+1.$$ Then if $k=2n+1$ we see that
$\zeta^{2k+2}>\zeta^{2k}+\zeta^{2k-2}-1$ and so $$\zeta^{2k+6}-1>\zeta^2(\zeta^{2k}-1)(\zeta^2+1)$$ which implies
$d_{k+3}>d_k$.

If $a=1$ we have $$d_{2n+1}=bd_{2n}-d_{2n-1}=(b-1)d_{2n}-d_{2n-1}+d_{2n}$$ and $$d_{2n}=d_{2n-1}-d_{2n-2}<d_{2n-1}$$ for
$n>1$. This proves the first inequality in (11). To get the second one, observe
\begin{equation*}\begin{split}d_{2n+1}&=bd_{2n}-d_{2n-1}=(b-1)d_{2n-1}-bd_{2n-2}\\
&=(b-2)d_{2n-1}+bd_{2n-2}-d_{2n-3}-bd_{2n-2}<(b-2)d_{2n-1}\end{split}\end{equation*} if $n>1$.
\end{proof}

In this section, it is convenient to denote $\{u_0,v_0\}$ the standard basis of $\Q^2$. We will use the following
notation ($i\ge0$):
\begin{align*}
u_i=\tau^iu_0=(d_{2i+1},c_{2i}),\quad
&v_i=\tau^{-i}v_0=(d_{2i},c_{2i+1}),\\
\overline{u}_i=\tau^i\omega_1v_0=(d_{2i+2},c_{2i+1}),\quad
&\overline{v}_i=\tau^{-i}\omega_2u_0=(d_{2i+1},c_{2i+2}).
\end{align*}

It is clear that these are the positive roots $\Phi^+$ of the infinite root system $\Phi$. We write
$$
\mathcal{A}=\big\{u_i,\overline{u}_i\,|\,i\ge0\big\},\quad
\mathcal{B}=\big\{v_i,\overline{v}_i\,|\,i\ge0\big\}.
$$

Recall (\cite{Tits}) that a set of roots $\Psi$ is called \emph{pre-nilpotent} if there are elements $\omega,\omega'\in
W$ such that $\omega\Psi\subset\Phi^+$ and $\omega'\Psi\subset\Phi^-$.

\begin{Prop}\label{prenilpotent}
If $e,w\in\mathcal{A}$ (resp.\ $\mathcal{B}$), then the pair $\{e,w\}$ is pre-nilpotent.
\end{Prop}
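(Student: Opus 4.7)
The strategy is to exploit the fact that $\mathcal{A}$ and $\mathcal{B}$ are each stable under translation by a single generator of the infinite cyclic part of $W$: $\tau$ translates the indices in $\mathcal{A}$, while $\tau^{-1}$ translates those in $\mathcal{B}$. Consequently, a single sufficiently large power of $\tau^{\pm 1}$ can simultaneously push both roots of a given pair across zero into the negative cone. Since $\mathcal{A}, \mathcal{B}\subset \Phi^+$, the requirement $\omega\{e,w\}\subset\Phi^+$ is free --- take $\omega = 1$ --- so only the second half of the pre-nilpotency definition requires work.

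For $e, w \in \mathcal{A}$, I would first establish the base step by a short matrix computation, using $\omega_1 u_0 = -u_0$, $\omega_2 v_0 = -v_0$, $\omega_2 u_0 = \bar v_0$, $\omega_1 v_0 = \bar u_0$:
$$\tau^{-1} u_0 = \omega_2\omega_1 u_0 = -\omega_2 u_0 = -\bar v_0,\qquad \tau^{-1}\bar u_0 = \omega_2\omega_1\omega_1 v_0 = \omega_2 v_0 = -v_0,$$
both negative roots. Since $u_i = \tau^i u_0$ and $\bar u_i = \tau^i\omega_1 v_0$, iterating these two identities gives $\tau^{-j}u_i = -\bar v_{j-i-1}$ and $\tau^{-j}\bar u_i = -v_{j-i-1}$ for every $j > i$, both lying in $\Phi^-$. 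Consequently, if $N$ exceeds the indices of both $e$ and $w$, then $\omega' = \tau^{-N}$ sends $\{e,w\}$ into $\Phi^-$, completing the $\mathcal{A}$ case.

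The case $e, w \in \mathcal{B}$ is entirely symmetric: the analogous computations $\tau v_0 = -\bar u_0$ and $\tau \bar v_0 = -u_0$, combined with $v_i = \tau^{-i}v_0$ and $\bar v_i = \tau^{-i}\omega_2 u_0$, make a sufficiently large positive power $\omega' = \tau^{N}$ do the job. There is no real obstacle here; the only point to verify is the short calculation that $\tau^{\mp 1}$ already sends the base roots of $\mathcal{A}$ (respectively $\mathcal{B}$) into $\Phi^-$, after which the uniform translation structure of each set supplies a common $\omega'$ automatically.
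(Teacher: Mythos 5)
Your proof is correct and follows the same strategy as the paper's: take $\omega=1$ for the positive half, and use a sufficiently large power of $\tau^{-1}$ (resp.\ $\tau$) to push both roots into $\Phi^-$. The paper simply asserts that such an $N$ exists, whereas you verify it with the explicit identities $\tau^{-j}u_i=-\overline{v}_{j-i-1}$ and $\tau^{-j}\overline{u}_i=-v_{j-i-1}$, which check out against the definitions in Section 3.
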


\begin{proof} If $e,w\in\mathcal{A}$ and $N>0$ is large enough then we have $\tau^{-N}e,\tau^{-N}w\in\Phi^-$. If
$e,w\in\mathcal{B}$ then we may use $\tau^N$ for $N>0$ large enough.
\end{proof}

\begin{Prop}\label{noroots}
If $e,w\in\mathcal{A}$ (resp.\ $\mathcal{B}$) and $a>1$, then $e+w\notin\Phi$.
\end{Prop}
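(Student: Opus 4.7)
The plan is to bring in the $W$-invariant symmetric bilinear form $(-,-)$ on $\Q a_1\oplus\Q a_2$ obtained by symmetrizing $A$ via the diagonal matrix $\operatorname{diag}(b,a)$, so that $(a_1,a_1)=2b$, $(a_2,a_2)=2a$, $(a_1,a_2)=-ab$; a direct check shows $\omega_1,\omega_2$ preserve it. Since $\Phi=W\{a_1,a_2\}$, every $\alpha\in\Phi$ satisfies $(\alpha,\alpha)\in\{2a,2b\}$. The strategy is first to show $(e,w)>0$ strictly for all $e,w\in\mathcal A$, and then to combine this positivity with the length constraint on $e+w$ to force a contradiction.

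The key step is the positivity claim. Using $W$-invariance together with $u_i=\tau^i a_1$ and $\overline{u}_i=\tau^i\omega_1 a_2$, it reduces to positivity of the three pairings $(a_1,\tau^n a_1)$, $(a_2,\tau^n a_2)$ and $(a_1,\tau^n\omega_1 a_2)$ for $n\in\Z$. Substituting the $(a_1,a_2)$-coordinates of $\tau^n a_1=u_n$, $\tau^n a_2=-d_{2n}a_1-c_{2n-1}a_2$ and $\tau^n\omega_1 a_2=\overline{u}_n$, and simplifying via the recursions $ac_{2n}=d_{2n+1}+d_{2n-1}$ and $bd_{2n}=c_{2n+1}+c_{2n-1}$, the three pairings collapse respectively to $b(d_{2n+1}-d_{2n-1})$, $a(c_{2n+1}-c_{2n-1})$ and $b(d_{2n+2}-d_{2n})$, each strictly positive by Proposition~\ref{longlist}(9).

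For the endgame, assume $e+w\in\Phi$. From $(e+w,e+w)=(e,e)+(w,w)+2(e,w)$, the bounds $(e,e),(w,w)\ge 2a$ and $(e,w)>0$ force $(e+w,e+w)>4a\ge 2a$, hence $(e+w,e+w)=2b$. A quick case analysis on $((e,e),(w,w))\in\{2a,2b\}^2$ (using $(e,w)>0$) leaves only $(e,e)=(w,w)=2a$ with $(e,w)=b-2a$; that is, $e=\overline{u}_i$, $w=\overline{u}_j$ for some $i\le j$. The subcase $i=j$ is ruled out because $2\overline{u}_i\in 2\Lambda$, whereas no element of $\Phi=W\{a_1,a_2\}$ can lie in $2\Lambda$ (since $W$ preserves $\Lambda$ and $a_1,a_2\notin 2\Lambda$). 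For $i<j$, setting $n=j-i\ge 1$, $(e,w)=a(c_{2n+1}-c_{2n-1})\ge a(c_3-c_1)=a(ab-2)$ by Proposition~\ref{longlist}(7) (with a direct verification when $ab=4$), and $a(ab-2)>b-2a$ reduces to $a^2>1$, which holds because $a>1$. The case $e,w\in\mathcal B$ is handled symmetrically with $v_i,\overline{v}_i$ in place of $u_i,\overline{u}_i$.

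The main obstacle is arranging the final case analysis so that the hypothesis $a>1$ is used exactly to secure $a(ab-2)>b-2a$; one must also separately dispose of the degenerate subcase $e=w$ via the lattice/divisibility argument, as it slips through the pure length counting.
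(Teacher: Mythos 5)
Your proof is correct, and it takes a genuinely different route from the paper's. The paper first rules out $e+w\in\mathcal{B}$ by applying a high power of $\tau$, invokes the $W$-invariant quadratic form $Q(X,Y)=bX^2+aY^2-abXY$ only to discard the two configurations in which $e$, $w$ and $e+w$ all have the same length, and then disposes of the four remaining coordinate identities (e.g.\ $u_i+u_j=\overline{u}_k$) by reducing the entries $c_n,d_n$ modulo $a$ or $b$ via Proposition~\ref{longlist}(2). You instead exploit the polarized form systematically: your positivity lemma $(e,w)>0$ for all $e,w\in\mathcal{A}$ --- which the paper never states, although it is in effect the unproved content of its assertion that $Q(e)=Q(w)=Q(e+w)$ is impossible, since that equality forces a negative pairing --- combines with $(e+w,e+w)\in\{2a,2b\}$ to collapse everything to the single case $\overline{u}_i+\overline{u}_j$, which the growth estimate $c_{2n+1}-c_{2n-1}\ge ab-2$ and the hypothesis $a>1$ then kill. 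I checked your three pairing formulas $b(d_{2n+1}-d_{2n-1})$, $a(c_{2n+1}-c_{2n-1})$, $b(d_{2n+2}-d_{2n})$ against the recursions and they are right. What your route buys is a cleaner logic (no separate exclusion of $e+w\in\mathcal{B}$ or of negative roots, no case-by-case congruences, and the $a>1$ hypothesis enters at exactly one visible point); what the paper's route buys is brevity once the four cases are listed. Two small points: your ``$2\Lambda$'' should be the root lattice $\Z a_1\oplus\Z a_2$ rather than the weight lattice $\Lambda$ of section~\ref{Tits} (the divisibility argument is correct for the former), and the appeal to Proposition~\ref{longlist}(7) is legitimate because the only subcase needing it has $b>2a\ge4$, hence $ab>4$ automatically, so your parenthetical about $ab=4$ is not actually needed.
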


\begin{proof}
If $e,w\in\mathcal{A}$ and $e+w\in\mathcal{B}$ then for $N>$ large enough $\tau^N(e+w)\in\Phi^-$ while
$\tau^N(e),\tau^N(w)\in\Phi^+$. Hence, if $e+w\in\Phi$ then $e+w\in\mathcal{A}$. Moreover, $W$ leaves
invariant the quadratic form $$Q(X,Y)=bX^2+aY^2-abXY$$ and $Q(e)=Q(w)=Q(e+w)$ is impossible. Hence, any relation
$e+w\in\Phi$ should be one of these ($i,j,k\ge0$):
1) $u_i+u_j=\overline{u}_k$,
2) $u_i+\overline{u}_j=u_k$,
3) $u_i+\overline{u}_j=\overline{u}_k$,
4) $\overline{u}_i+\overline{u}_j=u_k$.

Each of these equalities can be translated into an equality between coefficients $c_n$, $d_n$ as follows:
\begin{align*}
1)\quad&(d_{2i+1},c_{2i})+(d_{2j+1},c_{2j})=(d_{2k+2},c_{2k+1})\\
2)\quad&(d_{2i+1},c_{2i})+(d_{2j+2},c_{2j+1})=(d_{2k+1},c_{2k})\\
3)\quad&(d_{2i+1},c_{2i})+(d_{2j+2},c_{2j+1})=(d_{2k+2},c_{2k+1})\\
4)\quad&(d_{2i+2},c_{2i+1})+(d_{2j+2},c_{2j+1})=(d_{2k+1},c_{2k})
\end{align*}

In each case, reducing modulo $a$ or $b$ and applying \ref{longlist}(2) we get a contradiction. If we start with
$e,w\in\mathcal{B}$ and assume $e+w\in\Phi$, then applying $-\tau^N$ for $N$ large enough we obtain a relation $e'+w'\in
\Phi$ with $e',w'\in\mathcal{A}$ which we have seen is not possible.
\end{proof}

The above result fails for $a=1$ since in this case one sees immediately that
$$\overline{u}_0+\overline{u}_1=u_1,\quad\overline{v}_0+\overline{v}_1=v_1$$ and so, applying $\tau^{\pm i}$ we
obtain relations $$\overline{u}_i+\overline{u}_{i+1}=u_{i+1},\quad\overline{v}_i+\overline{v}_{i+1}=v_{i+1}.\quad(*)$$

\begin{Prop}\label{rootsa=1}
If $a=1$, $b>4$, $e,w\in\mathcal{A}$ (resp.\ $\mathcal{B}$) and $e+w=f\in\Phi$, then the equality $e+w=f$ is as in $(*)$
above.
\end{Prop}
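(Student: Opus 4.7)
My plan is to follow the strategy of Proposition~\ref{noroots} and, at the stage where that proof concludes by reducing modulo $a$, use instead the explicit trigonometric formulas of Proposition~\ref{longlist}(5) to rule out the remaining possibilities. Exactly as in the proof of~\ref{noroots}, applying a suitable power of $\tau$ forces $f\in\mathcal{A}$ whenever $e,w\in\mathcal{A}$ and $e+w=f\in\Phi$; combined with the invariance of $Q(X,Y)=bX^2+Y^2-bXY$ on the two orbits $Wu_0$ (with $Q=b$) and $Wv_0$ (with $Q=1$), this leaves the four mixed cases 1)--4) of that proof. Since $b>4$, reduction modulo $b$ as in~\ref{noroots} immediately discards cases 1) and 2), forces $j\equiv k\,(2)$ in case 3), and forces $i\not\equiv j\,(2)$ in case 4).

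To finish, I would pass to the hyperbolic parametrisation supplied by~\ref{longlist}(5). Setting $\zeta=e^{\theta}$, so that $\cosh(2\theta)=(b-2)/2$ and $\cosh\theta=\sqrt{b}/2$, the formulas read
\begin{equation*}
d_{2n+1}=c_{2n+1}=\frac{\sinh((2n+1)\theta)}{\sinh\theta},\quad d_{2n}=\frac{\sinh(2n\theta)}{\sinh(2\theta)},\quad c_{2n}=\frac{b\sinh(2n\theta)}{\sinh(2\theta)}.
\end{equation*}
In case 4) the two coordinate equations $d_{2i+2}+d_{2j+2}=d_{2k+1}$ and $c_{2i+1}+c_{2j+1}=c_{2k}$ become, after sum-to-product and using $2\cosh\theta=\sqrt{b}$,
\begin{equation*}
\sinh((i+j+2)\theta)\cosh((j-i)\theta)=\cosh\theta\sinh((2k+1)\theta),
\end{equation*}
\begin{equation*}
\sinh((i+j+1)\theta)\cosh((j-i)\theta)=\cosh\theta\sinh(2k\theta).
\end{equation*}
Dividing and noting that $x\mapsto\sinh((x+1)\theta)/\sinh(x\theta)=\cosh\theta+\sinh\theta\coth(x\theta)$ is strictly decreasing for $x>0$, I conclude $i+j+1=2k$; back-substitution yields $\cosh((j-i)\theta)=\cosh\theta$, hence $|j-i|=1$, so the relation is $\overline{u}_m+\overline{u}_{m+1}=u_{m+1}$ with $m=\min(i,j)$, which is~$(*)$.

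Case 3) is ruled out by the same mechanism. The analogous manipulation produces $\cosh((k+j+2)\theta)/\cosh((k+j+1)\theta)=\sinh((2i+1)\theta)/\sinh(2i\theta)$; its left-hand side equals $\cosh\theta+\sinh\theta\tanh((k+j+1)\theta)<e^\theta$, while the right-hand side equals $\cosh\theta+\sinh\theta\coth(2i\theta)>e^\theta$ (undefined for $i=0$, a case handled by direct comparison of $c$-coordinates), giving the required contradiction. The $\mathcal{B}$-case is treated symmetrically with the same hyperbolic toolkit, yielding the corresponding $(*)$-relation. The main obstacle will be the clean translation into $\sinh$/$\cosh$ language; once that is in place, the strict monotonicity of these ratio functions handles everything uniformly.
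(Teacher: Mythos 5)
Your treatment of the $\mathcal{A}$-case is correct and takes a genuinely different route from the paper's. Both arguments start the same way (the $Q$-invariance and the congruences of \ref{longlist}(2) leave only the cases $u_i+\overline{u}_j=\overline{u}_k$ and $\overline{u}_i+\overline{u}_j=u_k$), but from there the paper applies $\tau^{-1}$ to force one subscript to be zero and then works by hand with the integer recurrences and the monotonicity facts \ref{longlist}(9)--(11), whereas you keep all three subscripts, pass to the closed forms of \ref{longlist}(5), and let the strict monotonicity of $x\mapsto\sinh((x+1)\theta)/\sinh(x\theta)=\cosh\theta+\sinh\theta\coth(x\theta)$ do the work. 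I checked your two displayed identities for case 4 and the analogous pair for case 3 (using $b=4\cosh^2\theta$ and $\sinh(2\theta)=2\sinh\theta\cosh\theta$): they are right, the deductions $i+j+1=2k$ and $|i-j|=1$ give exactly $\overline{u}_m+\overline{u}_{m+1}=u_{m+1}$, and the $<e^{\theta}$ versus $>e^{\theta}$ comparison kills case 3. Your version treats all subscripts uniformly and avoids the paper's normalization step; the paper's is more elementary. The omitted positivity checks ($k\ge1$ in case 4, and $k>j$, $i>0$ in case 3 before dividing) are immediate.

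The one place you should not wave at symmetry is the $\mathcal{B}$-case, because the situation there is \emph{not} the mirror image of $\mathcal{A}$. For $e,w\in\mathcal{B}$ with $a=1$, reduction modulo $a$ gives nothing, and reduction modulo $b$ eliminates $v_i+\overline{v}_j=\overline{v}_k$ and $\overline{v}_i+\overline{v}_j=v_k$ but leaves $v_i+v_j=\overline{v}_k$ and $v_i+\overline{v}_j=v_k$ --- not the barred analogues of the surviving $\mathcal{A}$-cases. Running your hyperbolic computation on these (or reducing to $\mathcal{A}$ by applying $-\tau^N$ as in the proof of \ref{noroots}, which sends $v_i\mapsto\overline{u}_{N-i-1}$ and $\overline{v}_i\mapsto u_{N-i-1}$) produces the relation $v_i+v_{i+1}=\overline{v}_i$, which is not the relation $\overline{v}_i+\overline{v}_{i+1}=v_{i+1}$ recorded in $(*)$. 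Indeed, for $a=1$, $b=5$ one has $\overline{v}_0+\overline{v}_1=(1,5)+(4,15)=(5,20)$, which is not a root (its value under $Q$ is $25$), while $v_1+v_2=(1,4)+(3,11)=(4,15)=\overline{v}_1$. So the $\mathcal{B}$-half of $(*)$ as printed has the bars on the wrong letters, and a ``symmetric'' argument claiming to reproduce it verbatim cannot be right. Your method does settle the $\mathcal{B}$-case, but you must apply it to the cases that actually survive and state the relation you actually obtain.
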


\begin{proof}
Assume $e+w=f\in\Phi$ with $e,w\in\mathcal{A}$, the case of $e,w\in B$ being equivalent. The same arguments as in
\ref{noroots} imply that $e+w=f$ has to be one of these equalities ($i,j,k\ge0$):
3) $u_i+\overline{u}_j=\overline{u}_k$,
4) $\overline{u}_i+\overline{u}_j=u_k$.

Notice that if $ijk>0$ then $\tau^{-1}$ gives a relation of the same type with smaller subscripts. Hence, we can assume
that at least one of the subscripts $i,j,k$ is equal to zero.

Equation 3) implies
$$
d_{2i+1}+d_{2j+2}=d_{2k+2},\quad
c_{2i}+c_{2j+1}=c_{2k+1}. $$
Hence $d_{2k+2}>d_{2j+2}$ and since the sequence $\{d_{2n}\}$ is strictly increasing, we
get $k>j$. Also, since the sequence $\{c_{2n+1}\}$ is strictly increasing, we get $i>0$. Hence, we can assume $j=0$
which amounts to
$$
d_{2i+1}+1=d_{2k+2},\quad c_{2i}+1=c_{2k+1}.
$$
Then,
$$
bd_{2i}-d_{2i-1}=d_{2i+1}=d_{2k+2}-1=d_{2k+1}-d_{2k}-1=bd_{2i}-d_{2k}.
$$
Using \ref{longlist}(10) we have $d_{2i+2}>d_{2i-1}=d_{2k}<d_{2k+1}$ and so $i=k$ which is impossible since this would
imply $1+d_{2k+1}=d_{2k+2}=d_{2k+1}-d_{2k}$.

On the other side, equation 4) implies
$$
d_{2i+2}+d_{2j+2}=d_{2k+1},\quad
c_{2i+1}+c_{2j+1}=c_{2k1}. $$
In particular, $k>0$ and without loss of generality we can assume $i=0$. We have
$$
1+d_{2j+2}=d_{2k+1},\quad 1+c_{2j+1}=c_{2k}
$$
and then
\begin{align*}
1+d_{2j+2}=&d_{2k+1}=c_{2k}-d_{2k-1}=1+d_{2j+1}-d_{2k-1}\\
d_{2j+2}=&d_{2j+1}-d_{2j}
\end{align*}
and so $d_{2j}=d_{2k-1}<d_{2k+2}$ (using \ref{longlist}(10)) and $j<k+1$ because $\{d_{2n}\}$ is strictly increasing. On
the other hand, also by \ref{longlist}(10), we have $d_{2k-2}<d_{2k-1}=d_{2j}$ and since $\{d_{2n}\}$ is strictly
increasing we have $k-1<j$. Hence, $j=k$. Then, $c_{2k}=1+c_{2k+1}=1+c_{2k}-c_{2k-1}$ which yields $k=1$.
\end{proof}

The only remaining case is $a=1$, $b=4$. In this case, the roots are explicitly computed in \ref{longlist}(4) and the
following result can be easily obtained.

\begin{Prop}\label{rootsa1b4}
If $a=1$, $b=4$, $e,w\in\mathcal{A}$ (resp.\ $\mathcal{B}$) and $e+w=f\in\Phi$, then the equality $e+w=f$ is
$$\overline{u}_i+\overline{u}_{i+2k-1}=u_{i+k}$$ for some $i\ge0, k>0$ (resp.\
$\overline{v}_i+\overline{v}_{i+2k-1}=v_{i+k}$).\qed
\end{Prop}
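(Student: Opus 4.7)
The plan is to follow verbatim the scheme of Propositions \ref{noroots} and \ref{rootsa=1}. The same $\tau^N$ argument still works: if $e,w\in\mathcal{A}$ and $e+w\in\Phi$, then for $N\gg0$ we have $\tau^N(e+w)=\tau^Ne+\tau^Nw\in\Phi^+$ while we need $\tau^Ne,\tau^Nw$ to move in a controlled way, so $e+w$ must lie in $\mathcal{A}$ rather than $\mathcal{B}$. Consequently $e+w=f$ must take one of the four forms 1)--4) listed in the proof of Proposition \ref{noroots}, and my task reduces to checking each of them against the explicit formulas $d_{2n}=n$, $c_{2n}=4n$, $c_{2n+1}=d_{2n+1}=2n+1$ from \ref{longlist}(4).

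Substituting these formulas into the four candidate equalities, I expect the first three to collapse immediately. In case 1), $u_i+u_j=\overline{u}_k$, the second coordinate reads $4i+4j=2k+1$, an even-equals-odd impossibility. In case 2), $u_i+\overline{u}_j=u_k$, the second coordinate reads $4i+2j+1=4k$, again even equal to odd. In case 3), $u_i+\overline{u}_j=\overline{u}_k$, parity is no obstruction, but the two coordinate equations produce $k=2i+j+1$ from the first and $k=2i+j$ from the second, which is a contradiction.

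Only case 4), $\overline{u}_i+\overline{u}_j=u_k$, survives, and both coordinate equations collapse to the single diophantine condition $i+j=2k-1$. Since the sum $i+j$ is odd, $i$ and $j$ have opposite parities; in particular $i\ne j$, and by the symmetry of the pair I may assume $i<j$. Setting $\ell:=k-i$ then forces $\ell\ge1$ and yields $j=i+2\ell-1$ and $k=i+\ell$, which is exactly the relation $\overline{u}_i+\overline{u}_{i+2\ell-1}=u_{i+\ell}$ claimed in the proposition. The $\mathcal{B}$-case follows at once by applying $-\tau^N$ for large $N$, exactly as at the end of the proof of \ref{noroots}. There is no serious obstacle here: once \ref{longlist}(4) is available the whole proposition reduces to elementary parity checks and one reparametrization.
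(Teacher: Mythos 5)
Your treatment of the $\mathcal{A}$-case is correct and is precisely the ``easy'' verification the paper has in mind: with $u_i=(2i+1,4i)$ and $\overline{u}_i=(i+1,2i+1)$ from \ref{longlist}(4), cases 1)--3) die by parity or by the mismatch $k=2i+j+1$ versus $k=2i+j$, and case 4) collapses to the single condition $i+j=2k-1$, which reparametrizes exactly to $\overline{u}_i+\overline{u}_{i+2k-1}=u_{i+k}$. (You quietly reuse the reduction of \ref{noroots} to the four listed shapes, i.e.\ the exclusion of $u_i+u_j=u_k$ and $\overline{u}_i+\overline{u}_j=\overline{u}_k$ via the invariant quadratic form; that is legitimate since that part of \ref{noroots} does not use $a>1$, and in any case both shapes are also killed here by parity of a coordinate.)

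The one place where ``follows at once'' hides a real issue is the $\mathcal{B}$-case. The symmetry $-\tau^{N}$ does reduce type $\mathcal{B}$ to type $\mathcal{A}$, but it interchanges barred and unbarred roots: one computes $-\tau^{-N}\overline{u}_j=(d_{2(N-j)-2},c_{2(N-j)-1})=v_{N-j-1}$ and $-\tau^{-N}u_j=(d_{2(N-j)-1},c_{2(N-j)})=\overline{v}_{N-j-1}$. Transporting $\overline{u}_i+\overline{u}_{i+2k-1}=u_{i+k}$ therefore produces relations of the form $v_i+v_{i+2k-1}=\overline{v}_{i+k-1}$, not $\overline{v}_i+\overline{v}_{i+2k-1}=v_{i+k}$. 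A direct check confirms this: with $v_i=(i,2i+1)$ and $\overline{v}_i=(2i+1,4i+4)$ one has $v_0+v_1=(1,4)=\overline{v}_0$, whereas $\overline{v}_0+\overline{v}_1=(4,12)$ has both coordinates even and is not a root (indeed $\overline{v}_i+\overline{v}_j$ is never a root). So the parenthetical you are reproducing (and likewise the display $(*)$ preceding \ref{rootsa=1}) has $v$ and $\overline{v}$ swapped, and your symmetry argument, carried out honestly, would have detected this rather than confirmed the printed form. You should either verify the $\mathcal{B}$-case directly in coordinates and state the corrected relation $v_i+v_{i+2k-1}=\overline{v}_{i+k-1}$, or at least flag the discrepancy; none of this affects the use made of these propositions later (only the existence and multiplicity of such relations matters in \ref{A*B} and \ref{rootsnu+mv}), but as a proof of the statement as printed the last step fails.
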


To summarize, we have seen that in the root system $\Phi$ the sum of two positive roots \emph{of the same
type} ($\mathcal{A}$ or $\mathcal{B}$) is almost never a root. The only exceptions are
\begin{align*}
\overline{u}_i+\overline{u}_{i+1}&=u_{i+1},\; i\ge0\text{ which occur for $a=1$;}\\
\overline{u}_i+\overline{u}_{i+2k-1}&=u_{i+k},\; i\ge0,\,k>0,\text{ which occur for $(a,b)=(1,4)$}
\end{align*}
for type $\mathcal{A}$ and similarly for type $\mathcal{B}$. Our final step in this section is to compute, in each of
the cases $e+w\in\Phi$ above, the sets $$(\N e+\N w)\cap\Phi.$$

\begin{Prop}\label{rootsnu+mv}
If $e,w\in\mathcal{A}$ (resp.\ $\mathcal{B}$) and $e+w\in\Phi$, then $(\N e+\N w)\cap\Phi=\{e+w\}$.
\end{Prop}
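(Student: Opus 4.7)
The plan is to reduce the statement to a calculation with the $W$-invariant quadratic form $Q(X, Y) = bX^2 + aY^2 - abXY$ from the proof of Proposition~\ref{noroots}. First, I would use Propositions~\ref{noroots}, \ref{rootsa=1} and \ref{rootsa1b4} to see that the case $a \geq 2$ is vacuous and that when $a = 1$ the roots $e$ and $w$ always lie in a single $W$-orbit while $e + w$ lies in the other orbit. Since $Q$ is $W$-invariant, it is constant on orbits and takes the value $b$ on the orbit of $a_1$ (containing every $u_i$ and $\overline{v}_i$) and the value $a = 1$ on the orbit of $a_2$ (containing every $\overline{u}_i$ and $v_i$).

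In the $\mathcal{A}$ case, where $e, w$ are of type $\overline{u}$ and $e + w$ is of type $u$, I have $Q(e) = Q(w) = 1$ and $Q(e+w) = b$, so the associated bilinear form satisfies $2\beta(e, w) = b - 2$ and
$$
Q(ne + mw) = n^2 + m^2 + nm(b - 2) = (n - m)^2 + nmb.
$$
For $ne + mw$ to lie in $\Phi$ this value must be $1$ or $b$. With positive integers $n, m$, the value $1$ is ruled out by $nmb \geq b \geq 4$; the value $b$ forces $(n - m)^2 = b(1 - nm) \geq 0$, hence $nm = 1$ and $n = m = 1$.

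In the symmetric $\mathcal{B}$ case the two $Q$-values swap and the analogous expansion gives $Q(ne + mw) = b(n - m)^2 + nm$. Requiring this to equal $1$ again pins $(n,m) = (1,1)$. Requiring it to equal $b$ yields $nm = b(1 - (n-m)^2)$, which admits, besides $(1,1)$, only $n = m = k$ with $k^2 = b$ and $k \geq 2$, producing the vector $k(e + w)$. This candidate is excluded by the standard fact that proper integer multiples of a real root in a Kac-Moody root system are never roots.

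I expect the only genuine obstacle to be this last step in the $\mathcal{B}$ case, where the quadratic form alone leaves open the vector $k(e+w)$ when $b$ is a perfect square; its exclusion rests on the multiplicity-one property of real roots (or, in the exceptional instance $(a,b) = (1,4)$, can be verified directly from the explicit parametrization of $\Phi$ in Proposition~\ref{longlist}(4)). Everything else is routine once Propositions~\ref{rootsa=1} and \ref{rootsa1b4} have pinned down the $W$-orbits of $e$, $w$ and $e+w$.
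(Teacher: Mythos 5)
Your argument is correct, but it takes a genuinely different route from the paper's. The paper's proof is computational: after using $\tau^{-1}$ to normalize the subscripts, it solves the $2\times2$ linear systems $n\overline{u}_0+m\overline{u}_1=\overline{u}_i$ and $n\overline{u}_0+m\overline{u}_1=u_i$ explicitly for $n,m$ in terms of the sequences $c_n,d_n$, and then invokes the inequalities of \ref{longlist}(11) (and the closed formulas of \ref{longlist}(4) for $(a,b)=(1,4)$) to pin down $n=m=1$. Your proof instead exploits the $W$-invariance of $Q$ and the fact that $Q$ takes only the two values $a=1$ and $b$ on $\Phi$; this is cleaner, treats $b>4$ and $b=4$ uniformly, and avoids the sequence estimates entirely. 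The computations check out: in the $\mathcal{A}$ case $Q(ne+mw)=(n-m)^2+nmb$ forces $n=m=1$ outright, and in the $\mathcal{B}$ case $Q(ne+mw)=b(n-m)^2+nm$ leaves only the extra candidate $n=m=\sqrt{b}$ when $b$ is a perfect square (note this arises for every square $b\ge4$ via the relations $\overline{v}_i+\overline{v}_{i+1}=v_{i+1}$, not only for $(a,b)=(1,4)$). Your appeal to the multiplicity-one property of real roots to kill this candidate is legitimate, but it is also unnecessary as an external import: since $\Phi=W\{a_1,a_2\}$ and every root is an integral combination of $a_1,a_2$, the relation $k\,v_{i+1}=\omega a_j$ with $k\ge2$ would give $a_j=k\,\omega^{-1}(v_{i+1})$ with $\omega^{-1}(v_{i+1})$ an integer vector, which is impossible; alternatively, the congruences of \ref{longlist}(2) rule it out directly. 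With that one-line patch the proof is complete and self-contained, and arguably preferable to the paper's case-by-case computation.
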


\begin{proof} We know that we can assume $a=1$. Consider first the case $n\overline{u}_0+m\overline{u}_1=f\in\Phi$,
$n,m>0$. Then, there are only two possibilities: either $f=\overline{u}_i$ or $f=u_i$. In the first case, we can solve
$n\overline{u}_0+m\overline{u}_1=\overline{u}_i$ for $n,m$ and get
\begin{align*}
n&=(b-1)\,d_{2i+2}-(b-2)\,d_{2i+1}\\
m&=d_{2i+1}-d_{2i+2}.
\end{align*}
Then, $n=d_{2i+1}-(b-1)\,d_{2i}$ and then \ref{longlist}(11) yields $n=0$. If $n\overline{u}_0+m\overline{u}_1=u_i$
then we can also solve for $n,m$:
\begin{align*}
n&=(b-1)\,d_{2i+1}-(b-2)\,c_{2i}\\
m&=c_{2i}-d_{2i+1}.
\end{align*}
Hence, $n=d_{2i+1}-(b-2)\,d_{2i-1}$ and \ref{longlist}(11) yields $n=m=1$.

In the general case of $n\overline{u}_i+m\overline{u}_{i+1}\in\Phi$ we can apply $\tau^{-1}$ enough times to get
$n\overline{u}_0+m\overline{u}_1\in\Phi$ which we have already ruled out.

It remains only the case $(a,b)=(1,4)$ where we have to consider $$n\overline{u}_i+m\overline{u}_{i+2k-1}\in\Phi,\quad
i\ge0,\,k>0.$$ Using $\tau^{-i}$ we can assume that $i=0$. Recall that the explicit values of $c_n$ and $d_n$ are given
by \ref{longlist}(4). Then, the solutions for $n\overline{u}_0+m\overline{u}_{2k-1}=\overline{u}_j$ are
$$n=1-\frac{j}{2k-1},\quad m=\frac{j}{2k-1}$$ which are not possible. Also, the solutions for
$n\overline{u}_0+m\overline{u}_{2k-1}=u_j$ are $$n=2-\frac{2j-1}{2k-1},\quad m=\frac{2j-1}{2k-1}$$ which are only
possible for $n=m=1$. The proposition is proven.
\end{proof}

\section{The parabolic subgroups and the L\'{e}vi decomposition}\label{parabolicsec}

In the case of the Kac-Moody group $G$ that we are considering, the poset of the proper (positive) parabolic subgroups consists
of only three groups $P_{\varnothing}=B^+$, $P_1$, $P_2$ and \ref{Mitchell} reduces to the fact that $BG$ is the homotopy colimit
of the diagram $BP_1\leftarrow BP_{\varnothing}\rightarrow BP_2$. This is equivalent to saying that $G$ is the amalgamated
product of $P_1$ and $P_2$ over $B^+$ (\cite{Brown}, Th.\ 7.3). In this section we want to investigate the group theoretical
structure of these groups.

Like in the case of a finite Weyl group, there is a L\'{e}vi decomposition for parabolic subgroups of Kac-Moody groups
(\cite{Remy}, 6.2) $P_I=H_I\ltimes V_I$. In particular, we have that $P_{\varnothing}=B=T\ltimes U^+$ where $U^+$ is the subgroup
generated by all positive root groups. Using the notations of section \ref{roots}, we have:

\begin{Prop}\label{A*B} (\cite{KacPete}, proposition 4.3 and remark 2)
Let $U_{\mathcal{A}}$ (resp.\ $U_{\mathcal{B}}$) be the subgroup generated by all root groups $U_e$ for $e\in\mathcal{A}$ (resp.\
$e\in\mathcal{B}$). Denote $E=\oplus_0^\infty k^+$. Then,
\begin{enumerate}
\item $U^+=U_{\mathcal{A}}\ast U_{\mathcal{B}}$.
\item If $a>1$, then both $U_{\mathcal{A}}$ and $U_{\mathcal{B}}$ are isomorphic to $E$.
\item If $a=1$, then both $U_{\mathcal{A}}$ and $U_{\mathcal{B}}$ are extensions of $E$ by $E$.
\end{enumerate}
\end{Prop}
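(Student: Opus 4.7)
The plan is to deduce all three statements from Tits's commutator formula for root subgroups in a Kac-Moody group, applied to the very detailed root-sum information gathered in section \ref{roots}. Recall that for any pre-nilpotent pair $\{e,w\}\subset\Phi^+$, Tits's formula gives
$$[U_e,U_w]\subseteq\prod_{\substack{m,n\ge1\\ me+nw\in\Phi}}U_{me+nw},$$
and by Proposition \ref{prenilpotent} every same-type pair in $\mathcal{A}$ (resp.\ $\mathcal{B}$) is pre-nilpotent, so the formula applies throughout.

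For (2), I would argue as follows. When $a>1$, Proposition \ref{noroots} says $e+w\notin\Phi$ for all $e,w\in\mathcal{A}$; one then upgrades this to $me+nw\notin\Phi$ for all $m,n\ge1$ using the $W$-invariant quadratic form $Q(X,Y)=bX^2+aY^2-abXY$, whose only values on real roots are $a$ and $b$, while $Q(me+nw)$ grows with $m+n$. Hence $[U_e,U_w]=1$, so $U_{\mathcal{A}}$ is abelian, countably generated by copies of $k^+$ indexed by $\mathcal{A}$; the independence of these copies follows from the standard fact that root subgroups are in direct sum in any ambient abelian subgroup, giving $U_{\mathcal{A}}\cong E$.

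For (3), when $a=1$, Propositions \ref{rootsa=1} and \ref{rootsa1b4} classify exactly which same-type sums are roots: only $\overline{u}_i+\overline{u}_{i+1}=u_{i+1}$ (when $b>4$) and the analogous $(a,b)=(1,4)$ relations, and Proposition \ref{rootsnu+mv} asserts these are the \emph{only} positive integer combinations that land in $\Phi$. Therefore Tits's formula gives $[U_{\overline{u}_i},U_{\overline{u}_{i+2k-1}}]\subseteq U_{u_{i+k}}$ and all other commutators trivial. Let $U_u\subseteq U_{\mathcal{A}}$ be the subgroup generated by the $U_{u_i}$; by Proposition \ref{noroots}-type analysis $U_u$ is abelian ($\cong E$) and, being generated by commutators of the other generators plus a set whose elements commute with everything, is normal in $U_{\mathcal{A}}$. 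Modulo $U_u$ the images of the $U_{\overline{u}_i}$ commute, so $U_{\mathcal{A}}/U_u$ is also abelian and isomorphic to $E$, yielding the extension of $E$ by $E$ claimed in (3). The same argument works for $\mathcal{B}$.

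For (1), the free product structure, the approach is to use the $W$-action on $U^+$ to separate the two families. Concretely, $\tau^{-N}$ sends every element of $\mathcal{A}$ into $\Phi^-$ for $N$ large enough while keeping a cofinal subset of $\mathcal{B}$ in $\Phi^+$, so $U_{\mathcal{A}}$ and $U_{\mathcal{B}}$ behave as unipotent radicals of two opposite directions in the Tits cone. Combined with the $UTU^-$ normal form in a BN-pair, this yields both $U_{\mathcal{A}}\cap U_{\mathcal{B}}=\{1\}$ and a unique alternating normal form for elements of $U^+$, which is precisely the universal property of the free product. The main obstacle I anticipate is (1): the commutator analysis for (2) and (3) is essentially an inspection of Tits's formula against the results of section \ref{roots}, but showing that no non-trivial relations link $U_{\mathcal{A}}$ to $U_{\mathcal{B}}$ requires genuinely non-local information about $U^+$ and is the reason this is extracted from \cite{KacPete} rather than proved directly here.
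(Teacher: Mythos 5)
Your overall route is the same as the paper's. Part (1) is taken on faith from Kac--Peterson, exactly as the paper does; your sketched direct argument (trivial intersection of $U_{\mathcal{A}}$ and $U_{\mathcal{B}}$ plus an asserted alternating normal form) is not by itself a proof of freeness --- trivial intersection and generation never imply a free product, and the uniqueness of alternating words is the whole content --- but you flag this honestly and defer to the citation. Parts (2) and (3) are read off from the root-sum analysis of section \ref{roots} together with Tits's commutator formula, which is precisely the paper's argument; in particular your treatment of (3) (the $U_{u_i}$ generate an abelian normal subgroup isomorphic to $E$ with abelian quotient isomorphic to $E$, using \ref{rootsa=1}, \ref{rootsa1b4} and \ref{rootsnu+mv}) coincides with the paper's.

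The one step where your argument does not hold up as written is the exclusion of $me+nw\in\Phi$ for $m,n\ge1$, $m+n\ge3$, in the case $a>1$. You are right that the commutator formula requires controlling the whole interval $(\N e+\N w)\cap\Phi$ and not just $e+w$ --- a point the paper's proof passes over in silence, invoking only Proposition \ref{noroots} --- but the quadratic form does not settle it the way you claim. $Q$ is indefinite, its value on a root is $a$ or $b$ no matter how large the coordinates are, and $Q(me+nw)=m^2Q(e)+n^2Q(w)+mnB(e,w)$ ``grows with $m+n$'' only after one checks that $B(e,w)>0$ for same-type pairs (true, because $\mathcal{A}$ spans a convex cone on which both linear factors of $Q$ are positive, but you do not prove it); and even granting growth, for small $m+n$ and large $b$ the value $Q(me+nw)$ can perfectly well equal $b$, so size alone does not exclude a root. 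To close this you would need to redo the coordinate/congruence analysis of \ref{noroots} for general coefficients $m,n$ (where the congruences mod $a$ and $b$ no longer give an automatic contradiction), or simply quote Kac--Peterson's Proposition 4.3 and Remark 2 for the structure of $U_{\mathcal{A}}$ itself, which is what the attribution of the proposition indicates and what the paper effectively does.
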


\begin{proof}
(1) is in \cite{KacPete} 4.3. (2) and (3) follow from the analysis in section \ref{roots}. If $a>1$ we have seen that the sum of
two roots in $\mathcal{A}$ (resp.\ $\mathcal{B}$) is not a root. Hence, $U_{\mathcal{A}}$ and $U_{\mathcal{B}}$ are abelian
groups generated by countably many root groups, each one isomorphic to $k^+$. If $a=1$ we have seen that in some cases, the sum
of two roots in $\mathcal{A}$ can be a root (cf.\ \ref{rootsa=1}, \ref{rootsa1b4}). However, the subgroup of $U_{\mathcal{A}}$
generated by the root groups $U_{u_i}$ for $i\ge0$ is abelian, isomorphic to $E$ and normal in $U_{\mathcal{A}}$. Also,
it follows from \ref{rootsa=1}, \ref{rootsa1b4} and \ref{rootsnu+mv} that the quotient of $U_{\mathcal{A}}$ by this
normal subgroup is also abelian and isomorphic to $E$. The same holds for $U_{\mathcal{B}}$.
\end{proof}

To determine the structure of the subgroups $V_I$ we need the following lemma.

\begin{Lem}\label{coverings}
Let $K$ and $M$ be groups and assume $N$ is a normal subgroup of index $r$ in $K$. Consider the inclusion $N\ast M < K \ast M$.
Then the normal closure of $N\ast M$ in $K\ast M$ is isomorphic to the free product $ N \ast (\ast_{i=1}^r M)$.
\end{Lem}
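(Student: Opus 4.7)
The plan is to realize the normal closure $H$ of $N\ast M$ in $K\ast M$ as the fundamental group of an explicit $r$-sheeted regular cover of $BK\vee BM$, from which the stated free product decomposition of $H$ is immediate.

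First I would verify that $[K\ast M:H]=r$. The universal property of the free product gives a homomorphism $\rho\colon K\ast M\to K/N$ with $\rho|_K$ the canonical quotient and $\rho|_M$ trivial. The subgroup $\ker\rho$ is normal and contains both $N$ and $M$, so it contains $H$. Conversely, in $(K\ast M)/H$ the image of $M$ is trivial (since $M\subset H$) and the image of $K$ is a quotient of $K/N$ (since $N\subset H\cap K$), whence $|(K\ast M)/H|\le r$; combined with the surjection $(K\ast M)/H\twoheadrightarrow K/N$ induced by $\rho$, this forces $H=\ker\rho$ and $[K\ast M:H]=r$.

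Next I would pass to classifying spaces. Identifying $K\ast M$ with $\pi_1(BK\vee BM)$, the subgroup $H$ corresponds to a connected regular cover $p\colon\widetilde{Y}\to BK\vee BM$ with deck group $K/N$ and $r$ sheets. The preimage $p^{-1}(BK)$ is a cover of $BK$ classified by the $K$-action on the fiber $K/N$; this action is transitive with point-stabilizer $N$, so $p^{-1}(BK)$ is the connected cover $BN\to BK$. The preimage $p^{-1}(BM)$ is a cover of $BM$ classified by the $M$-action on $K/N$, which is trivial because $\rho|_M=1$; thus $p^{-1}(BM)$ is a disjoint union of $r$ copies of $BM$. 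The two pieces meet exactly in the fiber $p^{-1}(\ast)$, a set of $r$ points, with each copy of $BM$ attached to a single one of these points of $BN$.

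Finally I would read off $\pi_1(\widetilde{Y})$. Collapsing a tree in $BN$ joining the $r$ attachment points shows that $\widetilde{Y}$ is homotopy equivalent to the wedge $BN\vee BM\vee\cdots\vee BM$ with $r$ copies of $BM$; equivalently, $\widetilde{Y}$ is a graph of spaces whose underlying graph is a star with central vertex labeled $BN$ and $r$ leaves labeled $BM$, all edge groups trivial. The Bass--Serre formula (or iterated van Kampen) then yields
$$
\pi_1(\widetilde{Y}) \;\cong\; N\ast\underbrace{M\ast\cdots\ast M}_{r\text{ copies}} \;=\; N\ast\Bigl(\ast_{i=1}^{r}M\Bigr),
$$
and since $p_\ast\colon\pi_1(\widetilde{Y})\to\pi_1(BK\vee BM)$ is injective with image $H$, the lemma follows. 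The main technical step is the identification of the cover $\widetilde{Y}$ in the second paragraph, in particular verifying that $p^{-1}(BM)$ splits into $r$ distinct components each attaching to a single fiber point of $BN$; this is where the normality of $N$ in $K$ and the triviality of $\rho|_M$ are used essentially. Once this combinatorial picture is in place the $\pi_1$ computation is routine.
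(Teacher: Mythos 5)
Your proof is correct and takes essentially the same approach as the paper: both realize the normal closure as the fundamental group of the $r$-sheeted cover of $BK\vee BM$ obtained by attaching $r$ copies of $BM$ to $BN$ at the fiber points of $BN\to BK$. The only cosmetic difference is that you first pin down the normal closure algebraically as $\ker\rho$ and then describe the corresponding cover, whereas the paper builds the cover directly and verifies the two inclusions between its fundamental group and the normal closure.
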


\begin{proof}
There is an easy topological proof for this lemma. The inclusion $N\vartriangleleft K$ can be realized topologically by
a pointed map of classifying spaces $\pi\colon BN\to BK$ which is a regular $r$-fold covering. Let $\{x_1,\ldots,x_r\}$ be
the fiber of $\pi$ over the base point of $BK$ and consider the map $\pi'\colon \widetilde{X}\to X$ defined as follows.
$X=BK\vee BM$, $\widetilde{X}=BN\vee(BM)_1\vee\cdots\vee(BM)_r$ where each $(BM)_i$ is glued to $BN$ at the point
$x_i$ and the map $\pi'$ is defined so that $\pi'|_{BN}=\pi$ and $\pi'|_{(BM)_i}=\text{id}$.

Then, it is clear that $\pi'$ is also a regular covering and $\pi'^*$ identifies $\pi_1(\widetilde{X})\iso N \ast
(\ast_{i=1}^r M)$ to a normal subgroup of $K\ast M$. Hence, the normal closure of $N\ast M$ in $K\ast M$ is contained
in $\pi_1(\widetilde{X})$.

On the other side, if $\gamma_i$ is a path in $BN$ from the base point to the
point $x_i$, then the copy of $M$ in $\ast_{i=1}^r M$ corresponding to the summand $(BM)_i$ is sent by $\pi'^*$ to
the conjugate $[\pi\gamma_i]^{-1}M[\pi\gamma_i]$ in $K\ast M$. Hence, $\pi_1(\widetilde{X})$ is contained in the normal
closure of $N\ast M$ in $K\ast M$.
\end{proof}

We are now ready to prove that for our cohomology computations we can replace the parabolic subgroups $P_I$ by the L\'{e}vi
subgroups $H_I$.

\begin{Prop}\label{acyclic}
For each $I\varsubsetneq\{1,2\}$, the homomorphism $P_I\rightarrow H_I$ induces an isomorphism in cohomology with coefficients in
$\Fp$.
\end{Prop}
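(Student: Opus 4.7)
The plan is to show that each $V_I$ is mod $p$ acyclic, i.e.\ $H^*(BV_I;\Fp)=\Fp$ concentrated in degree zero. Granting this, the Lyndon--Hochschild--Serre spectral sequence attached to the extension $V_I\to P_I\to H_I$ collapses to its bottom row, and the edge homomorphism exhibits the projection $P_I\to H_I$ as a mod $p$ cohomology isomorphism, which is what is required.

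The acyclicity of $V_I$ will be built up from its root-subgroup constituents. Each root group $U_e\cong k^+$ is a finite abelian $\ell$-group, and hence mod $p$ acyclic since $p\neq\ell$. Writing $E=\oplus_0^\infty k^+$ as a sequential colimit of its finite partial sums, the K\"{u}nneth isomorphism together with the Milnor $\varprojlim^1$ exact sequence yields $H^*(BE;\Fp)=\Fp$. Mod $p$ acyclicity is preserved both by group extensions (another LHS argument) and by free products (since $B(G\ast H)\simeq BG\vee BH$ and reduced cohomology converts wedges into direct sums), the latter applying equally well to countable free products. Combining these with Proposition \ref{A*B} gives that $U_{\mathcal{A}}$ and $U_{\mathcal{B}}$ are mod $p$ acyclic in both cases $a=1$ and $a>1$, and hence so is their free product $U^+=U_{\mathcal{A}}\ast U_{\mathcal{B}}$. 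This already disposes of the case $I=\varnothing$, where $V_\varnothing=U^+$.

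For $I=\{1\}$ (the case $I=\{2\}$ being symmetric), the strategy is to apply Lemma \ref{coverings}. Let $U'_{\mathcal{A}}\subset U_{\mathcal{A}}$ be the subgroup generated by the root groups $U_e$ with $e\in\mathcal{A}\setminus\{u_0\}$. Using the commutator relations between root groups in a Kac--Moody group together with the combinatorics of section \ref{roots} (Propositions \ref{noroots}, \ref{rootsa=1}, \ref{rootsa1b4}, \ref{rootsnu+mv}) one shows that $U'_{\mathcal{A}}$ is a normal subgroup of $U_{\mathcal{A}}$ with quotient $U_{\mathcal{A}}/U'_{\mathcal{A}}\cong U_{u_0}\cong k^+$ of order $q$, and that $U'_{\mathcal{A}}$ is mod $p$ acyclic by the same argument used for $U_{\mathcal{A}}$. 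The goal is then to identify $V_1$ with the normal closure of $U'_{\mathcal{A}}\ast U_{\mathcal{B}}$ inside $U^+=U_{\mathcal{A}}\ast U_{\mathcal{B}}$. Once this is done, Lemma \ref{coverings} presents $V_1$ as a free product $U'_{\mathcal{A}}\ast(\ast_{i=1}^q U_{\mathcal{B}})$ of mod $p$ acyclic groups, whence $V_1$ is itself mod $p$ acyclic.

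The technical heart of the argument is thus the identification of $V_1$ with the normal closure just described. One inclusion is immediate: $V_1$ is normal in $P_1\supseteq U^+$ and contains $U'_{\mathcal{A}}\cup U_{\mathcal{B}}$. For the other inclusion, the plan is to use the universal property of $U^+=U_{\mathcal{A}}\ast U_{\mathcal{B}}$ to produce the surjection $\pi\colon U^+\to U_{\mathcal{A}}/U'_{\mathcal{A}}\cong k^+$ that kills $U_{\mathcal{B}}$, whose kernel is exactly that normal closure. Then $V_1\subseteq\ker\pi$ follows from $V_1\cap U_{u_0}=1$ (a consequence of $P_1=H_1\ltimes V_1$) together with $U^+\cap H_1=U_{u_0}$, forcing $U^+/V_1\cong U_{u_0}$ so that $V_1$ has the same index in $U^+$ as $\ker\pi$. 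I expect the most delicate point here to be verifying normality of $U'_{\mathcal{A}}$ in $U_{\mathcal{A}}$ in the case $a=1$, where Proposition \ref{rootsnu+mv} is precisely what is needed to ensure that commutators $[U_{u_0},U_e]$ for $e\in\mathcal{A}\setminus\{u_0\}$ stay inside $U'_{\mathcal{A}}$.
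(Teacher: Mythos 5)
Your proposal is correct and follows essentially the same route as the paper: reduce to showing each $V_I$ is mod $p$ acyclic via the Lyndon--Hochschild--Serre spectral sequence, dispose of $V_\varnothing=U^+$ using Proposition \ref{A*B}, and identify $V_1$ with the normal closure of the subgroup generated by the root groups $U_e$, $e\in\Phi^+\setminus\{u_0\}$, so that Lemma \ref{coverings} exhibits it as a free product of $p$-acyclic groups. The only difference is one of detail: the paper simply cites R\'{e}my 6.2 for the normal-closure description of $V_1$, whereas you reconstruct it by hand via the index argument.
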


\begin{proof}
$P_I$ is an extension of $H_I$ by $V_I$, hence it is enough to prove that the groups $V_I$ are $p$-acyclic.
Recall that we are assuming that $p$ is prime to the order of $k$ and so the group $k^+$ is $p$-acyclic.
For $I=\varnothing$ we have $V_{\varnothing}=U^+$ and Proposition \ref{A*B} shows that $U^+$ is indeed $p$-acyclic.
Consider $V_1$. According to \cite{Remy} 6.2, $V_1$ is the normal closure in $U^+$ of the subgroup generated by the root
groups $U_e$ for $e\in\Phi^+-\{u_0\}$. \ref{A*B} and \ref{coverings} allows us to compute this normal closure and we see
that it is also mod $p$ acyclic.
\end{proof}

\begin{Rm}\label{acyclicgeneral}
The mod $p$ triviality of the unipotent subgroups of Kac-Moody groups over finite fields of characteristic 
different from $p$ has been proved independently in \cite{Foley}.
\end{Rm}

This last result implies that if we take coefficients in $\Fp$, then
the cohomology of $G$ is isomorphic to the cohomology of the group $H_1\ast_TH_2$. Notice that the hypothesis
$p\ne\ell$ is crucial.

The remainder of this section is devoted to study the structure of the groups $H_i=\langle T,U_i^+,U_i^-\rangle=\langle
T,\phi_i(SL_2(k))\rangle$ as well as the homomorphism $\eta\colon T\to H_i$. Recall from section \ref{Tits} the meaning of
the integers $n_i$, $m_i$, $s_i$, $t_i$ for $i=1,2$. Recall also the action of the Weyl group $W$ on $\Lambda$ given
by $\omega_i(\lambda)=\lambda-\langle h_i,\lambda\rangle\,\alpha_i$, for $\lambda\in\Lambda$.
Since the analysis for $H_1$ is the same as for $H_2$, we omit all subscripts $i=1,2$ and we write $H$, $n$, $m$, $s$,
$t$, $\omega$ to simplify the typography.

The homomorphism $\phi\colon SL_2(k)\to H$ can have a non trivial central kernel. We say that $H$ is \emph{monic} if $\phi$ is
injective. The property $(\ast)$ in section \ref{Tits} shows that $H$ is monic if and only if $\ell=2$ or $s,t$ are relatively
prime.

The proof of the following proposition is straightforward.

\begin{Prop}\label{extensionsmonic}
Assume $H$ is monic and consider $\psi\colon \kx\to\kx\times\kx$ given by $\psi(\zeta)=(\zeta^s,\zeta^t)$. Then, there is a
split exact sequence of abelian groups
$$\xymatrix{\kx\;\Mono^-\psi&\kx\times\kx\Epi^-\pi&\kx}$$ with $\pi(\zeta,\tau)=\zeta^{-t}\tau^s$. If $s,t$ are
relatively prime, a section is given by $\sigma(\zeta)=(\zeta^{-\mu},\zeta^{\lambda})$ where $\lambda,\mu$ are integers
such that $\lambda s+\mu t=1$. If $s,t$ are both even, then $\ell=2$ and a section is given by
$\sigma(\zeta)=((\zeta^{1/2})^{-m},(\zeta^{1/2})^{n})$.
\qed
\end{Prop}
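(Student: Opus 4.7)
The plan is to verify: (i) each proposed $\sigma$ satisfies $\pi\circ\sigma = \operatorname{id}_{\kx}$, (ii) $\psi$ is injective, and (iii) $\operatorname{im}\psi = \ker\pi$. The inclusion $\operatorname{im}\psi\subseteq\ker\pi$ is immediate from $\pi(\psi(\zeta)) = \zeta^{-st+ts} = 1$, and once (i) and (ii) are in place, the reverse inclusion follows by a counting argument (since all groups are finite): the existence of a section forces $\kx\times\kx = \ker\pi \oplus \sigma(\kx)$, so $|\ker\pi| = |\kx| = |\operatorname{im}\psi|$, giving equality.

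A key preliminary observation is that the matrix identity in item~(3) of Section~\ref{Tits} forces $sn+tm = 2$ (the diagonal entry of $A$), hence $\gcd(s,t)$ divides $2$. So we are in exactly one of two situations: $\gcd(s,t)=1$, or $\gcd(s,t)=2$ (i.e., $s,t$ both even). In the latter case, monicity rules out the former, so $\ell=2$; therefore $|\kx| = q-1$ is odd and squaring is an automorphism of $\kx$, making $\zeta^{1/2}$ well-defined.

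For (i), a one-line computation suffices in each case. In the coprime case, Bezout's relation gives $\pi(\sigma(\zeta)) = \zeta^{\mu t + \lambda s} = \zeta$. In the both-even case, the identity $sn+tm=2$ yields $\pi(\sigma(\zeta)) = (\zeta^{1/2})^{mt+ns} = \zeta^{(sn+tm)/2} = \zeta$. For (ii), in the coprime case $\psi(\zeta)=1$ forces $\operatorname{ord}(\zeta)\mid\gcd(s,t)=1$; in the both-even case it forces $\operatorname{ord}(\zeta)\mid\gcd(s,t)=2$, which combined with $\operatorname{ord}(\zeta)\mid q-1$ (odd) gives $\operatorname{ord}(\zeta)=1$.

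The only nontrivial point is the preliminary observation that $\gcd(s,t)\mid 2$: this is what makes the two listed cases exhaustive and what explains the appearance of $\zeta^{1/2}$ in the second formula (it is precisely halving the exponent $sn+tm=2$ to get $1$). Every remaining step is elementary exponent arithmetic in the finite cyclic group $\kx$.
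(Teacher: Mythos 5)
Your proof is correct, and it is exactly the ``straightforward'' verification the paper alludes to but does not write out (the proposition is stated with no proof). The one point that genuinely needs to be said --- that the defining relation $sn+tm=2$ forces $\gcd(s,t)\mid 2$, so the two listed cases are exhaustive, monicity then forces $\ell=2$ in the even case, and $\zeta^{1/2}$ is well defined because $q-1$ is odd --- is precisely the observation you isolate and justify, and the remaining exponent computations and the counting argument for $\operatorname{Ker}\pi=\psi(\kx)$ are all sound.
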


\begin{Prop}\label{semidirectmonic}
 Let $E=SL_2(k)\rtimes\kx$ with action given by $\zeta\cdot\psMatrix
xyzt=\psMatrix x{\zeta^r y}{\zeta^{-r}z}t$ for some integer $r$. Then, $E\iso GL_2(k)$ if $r$ is odd and $E\iso
SL_2(k)\times\kx$ if $r$ is even.
\end{Prop}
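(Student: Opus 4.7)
The plan is to construct, in each parity case, an explicit isomorphism $\Phi\colon E\to G$ of the form $(M,\zeta)\mapsto M\cdot D(\zeta)$, where $G$ denotes the target group and $D\colon\kx\to GL_2(k)$ is a suitable group homomorphism chosen to untwist the action. The key observation is that the prescribed action $\zeta\cdot M$ is conjugation by the diagonal matrix $\psMatrix{\zeta^r}001\in GL_2(k)$, and this inner automorphism of $SL_2(k)$ is unchanged if the matrix is multiplied by any nonzero scalar. This freedom lets us replace $\psMatrix{\zeta^r}001$ by a diagonal matrix whose determinant is prescribed as we wish (subject to the ratio of its entries being $\zeta^r$), and the parity of $r$ controls whether that determinant can be forced to be $1$.

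If $r=2s$ is even, take $D(\zeta)=\psMatrix{\zeta^s}00{\zeta^{-s}}$, which is a group homomorphism $\kx\to SL_2(k)$ and whose conjugation on any $2\times 2$ matrix reproduces the given action. Then $\Phi(M,\zeta)=(M\cdot D(\zeta),\zeta)$ is a homomorphism $E\to SL_2(k)\times\kx$, multiplicativity being an immediate consequence of $\zeta\cdot M'=D(\zeta)M'D(\zeta)^{-1}$ combined with $D(\zeta\zeta')=D(\zeta)D(\zeta')$. The inverse sends $(N,\zeta)$ to $(N\cdot D(\zeta)^{-1},\zeta)$, so $\Phi$ is an isomorphism.

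If $r=2s+1$ is odd, take $D(\zeta)=\psMatrix{\zeta^{s+1}}00{\zeta^{-s}}$. Now $D$ is a homomorphism $\kx\to GL_2(k)$ with $\det D(\zeta)=\zeta$, and conjugation by $D(\zeta)$ still recovers the action since $\zeta^{(s+1)-(-s)}=\zeta^{r}$. The same calculation as above shows $\Phi(M,\zeta)=M\cdot D(\zeta)$ is a homomorphism $E\to GL_2(k)$. For a given $G\in GL_2(k)$, the equation $\Phi(M,\zeta)=G$ forces $\zeta=\det G$ and then $M=G\cdot D(\det G)^{-1}$; this $M$ automatically has determinant $1$, so $\Phi$ is a bijection.

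No step is a serious obstacle: the proof reduces to writing down the right section $D\colon\kx\to GL_2(k)$ and carrying out a routine matrix identity check. The structural content is simply that the extension $1\to SL_2(k)\to GL_2(k)\to\kx\to 1$ is split by $\zeta\mapsto\psMatrix\zeta 001$, and the $r$-twisted action admits a parallel splitting landing in $SL_2(k)$ exactly when $r$ is even; when $r$ is odd the splitting must be taken in $GL_2(k)\setminus SL_2(k)$, producing the $GL_2$-extension instead of the direct product.
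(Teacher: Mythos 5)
Your proof is correct and is essentially identical to the paper's: in both parity cases the paper writes down the same untwisting homomorphism $(M,\zeta)\mapsto M\cdot D(\zeta)$ with $D(\zeta)=\psMatrix{\zeta^{r'}}00{\zeta^{1-r'}}$ for $r=2r'-1$ and $D(\zeta)=\psMatrix{\zeta^{r'}}00{\zeta^{-r'}}$ for $r=2r'$, which match your choices of diagonal section. The only difference is that you spell out the multiplicativity and bijectivity checks that the paper leaves implicit.
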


\begin{proof}
 If $r=2r'-1$, consider the homomorphism $$(M,\zeta)\mapsto M\psMatrix{\zeta^{r'}}00{\zeta^{1-r'}}$$ from
$SL_2(k)\rtimes\kx$ to $GL_2(k)$. If $r=2r'$, consider the homomorphism $$(M,\zeta)\mapsto
\left[M\psMatrix{\zeta^{r'}}00{\zeta^{-r'}},\zeta\right]$$ from $SL_2(k)\rtimes\kx$ to $SL_2(k)\times\kx$.
\end{proof}

Let $\overline{H}$ denote any of the groups $GL_2(k)$, $SL_2(k)\times\kx$ and let $\overline{\eta}$ be the homomorphism
$\overline{\eta}\colon T\to\overline{H}$ given by $\overline{\eta}(\zeta,\tau)=\psMatrix\zeta00\tau\in GL_2(k)$ and
$\overline{\eta}(\zeta,\tau)=\left[\psMatrix\zeta00{\zeta^{-1}},\tau\right]\in SL_2(k)\times\kx$.
Let $\overline\omega$ be the automorphism of $\Lambda$ given by $\overline\omega(e,v)=(v,e)$ if $\overline{H}=GL_2(k)$
and $\overline\omega(e,v)=(-e,v)$ if $\overline{H}=SL_2(k)\times\kx$.

We say that $H$ is
\emph{split} if $n,m$ are both even. With these notations, we can state the following structure theorem.

\begin{Prop}\label{parabolicmonic}
Assume $H$ is monic. Let $\overline{H}=SL_2(k)\times\kx$ if $H$ is split and $\overline{H}=GL_2(k)$ if it is not. Then,
\begin{enumerate}
\item There is an isomorphism $\gamma\colon H\iso\overline{H}$ such that the following diagram is commutative
$$\xymatrix{
T\ar[r]^{\eta}\ar[d]_{\gamma|_T}&H\ar[d]_\iso^\gamma\\
T\ar[r]^{\overline\eta}&\overline{H}
}
$$
 \item $\gamma|_T$ is induced by $M\colon \Lambda\to\Lambda$ given by $$M=\begin{cases}\Matrix{n/2}{-t}{m/2}{s}\text{ if $H$
is split}\\ \frac12\Matrix{n-t}{-n-t}{m+s}{-m+s}\text{ if $H$ is not split}.\end{cases}$$
\item $M^{-1}\omega M=\overline\omega$.
\end{enumerate}
\end{Prop}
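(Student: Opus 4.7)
The plan is to realize $H$ as an explicit semidirect product and then apply the two preceding structural results. First I would note that $\phi(SL_2(k))$ is normal in $H$: by $(\ast\ast)$, $\eta(T)$ normalizes each root group $U^{\pm}$, hence normalizes the subgroup $\phi(SL_2(k))$ they generate; by $(\ast)$, the intersection $\eta(T)\cap\phi(SL_2(k))$ consists exactly of the elements $\phi(\mathrm{diag}(r,r^{-1}))=\eta((r^s,r^t))$, that is, $\eta(\psi(\kx))$ in the notation of Proposition \ref{extensionsmonic}. Hence $H$ fits into a short exact sequence with kernel $\phi(SL_2(k))$ and quotient $\kx$.

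Using the splitting $\sigma$ provided by \ref{extensionsmonic}, I would identify $H\iso SL_2(k)\rtimes\kx$, where $\zeta\in\kx$ acts on $SL_2(k)$ by conjugation through $\eta(\sigma(\zeta))$. Formula $(\ast\ast)$ turns this into $\zeta\cdot\psMatrix xyzt=\psMatrix{x}{\zeta^r y}{\zeta^{-r} z}{t}$ where $\sigma(\zeta)(\alpha)=\zeta^r$: in case 1 of \ref{extensionsmonic}, $r=\lambda m-\mu n$, and in case 2 the two contributions cancel so $r=0$. Proposition \ref{semidirectmonic} then yields $H\iso GL_2(k)$ when $r$ is odd and $H\iso SL_2(k)\times\kx$ when $r$ is even, which gives claim (1) once we match the parity of $r$ to the split condition.

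The crucial parity check uses $\lambda s+\mu t=1$ together with $sn+tm=2$ (from $A_{ii}=2$). A short case analysis on the parities of $(s,t)$ shows that in case 1, $r\equiv 0\pmod 2$ precisely when $n$ and $m$ are both even; in case 2, $r=0$ is automatic while the hypothesis $\ell=2$ with $s,t$ even combined with $sn+tm=2$ again forces $n,m$ even. Given (1), claim (2) then follows by writing a general $\eta(\zeta,\tau)=\eta(\psi(a))\,\eta(\sigma(b))$, applying $\gamma$ (which sends $\eta\circ\psi$ to the diagonal of $SL_2$ and $\eta\circ\sigma$ to the ``extra'' factor of $\overline{H}$), and solving for $(a,b)$ in terms of $(\zeta,\tau)$; the resulting matrix of $\gamma|_T$ on $\Lambda=\Z e_1\oplus\Z e_2$ is exactly the displayed $M$, and $\det M=1$ follows once more from $sn+tm=2$.

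For claim (3), I would compute directly: the Weyl reflection $\omega$ acts on $\Lambda$ by the matrix $\begin{psmatrix}1-sn&-tn\\-sm&1-tm\end{psmatrix}$ (obtained from $\omega(\lambda)=\lambda-\langle h,\lambda\rangle\,\alpha$), and plugging in $M$ from (2) together with $sn+tm=2$ yields $M^{-1}\omega M=\begin{psmatrix}-1&0\\0&1\end{psmatrix}$ in the split case and $\begin{psmatrix}0&1\\1&0\end{psmatrix}$ in the non-split case, matching $\overline\omega$ in each case. The main obstacle I expect is the parity bookkeeping in the middle step, where the definition of ``split'' in terms of $(n,m)$ must be consistently translated into the parity of $r$ across both branches of \ref{extensionsmonic}; everything else reduces to manipulating $sn+tm=2$ inside standard $2\times 2$ matrix identities.
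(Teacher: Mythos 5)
Your overall strategy coincides with the paper's: both realize $H$ as $SL_2(k)\rtimes\kx$ via the extension and section of Proposition \ref{extensionsmonic}, reduce the action on $SL_2(k)$ to a single exponent $r$, feed it to Proposition \ref{semidirectmonic}, and then read off $M$ from $\delta(g)=(\phi^{-1}(g\,\sigma\pi(g)^{-1}),\pi(g))$; the verification of (3) by direct matrix multiplication is likewise the same. In the main case where $s,t$ are coprime, your parity analysis ($r=\lambda m-\mu n$ is even iff $n,m$ are both even) is correct and is exactly the paper's argument.

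There is, however, one false step in your treatment of the second branch of \ref{extensionsmonic} (the case $\ell=2$ with $s,t$ both even). You assert that $sn+tm=2$ then ``forces $n,m$ even.'' It forces the opposite: writing $s=2s'$, $t=2t'$, the relation becomes $s'n+t'm=1$, so $n$ and $m$ cannot both be even. Hence $H$ is \emph{not} split in this subcase, the statement of the proposition assigns $\overline H=GL_2(k)$, and yet your (correct) computation gives $r=0$ and hence $SL_2(k)\times\kx$. The reconciliation is not a parity count but the fact that $GL_2(k)\iso SL_2(k)\times\kx$ when $\operatorname{char}(k)=2$: squaring is bijective on $\kx$, so the scalar matrices split the determinant and meet $SL_2(k)$ trivially. (The paper itself quietly sidesteps this subcase by asserting that monicity gives $s,t$ coprime; note also that the non-split formula for $M$ in (2) fails to be integral there.) So: same route as the paper, correct in the generic coprime case, but the stated justification for the $\ell=2$ branch is wrong and should be replaced by the characteristic-two isomorphism above.
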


\begin{proof}
Recall that $H=\langle\phi(SL_2(k)),T\rangle$ and $T$ normalizes $SL_2(k)$ according to the formula $(\ast\ast)$ in
section \ref{Tits}. Since $H$ is monic we have that $s,t$ are relatively prime and we can apply
\ref{extensionsmonic}.
We have a commutative diagram were each row is an exact sequence and the bottom row is the split exact sequence in
\ref{extensionsmonic}.
$$
\xymatrix{
SL_2(k)\;\Mono&SL_2(k)\rtimes\kx\Epi&\kx\\
SL_2(k)\;\ar@{=}[u]\Mono^-\phi&H\ar[u]^\delta\Epi^-\pi&\kx\ar@{=}[u]\\
\kx\ar[u]\;\Mono^-\phi&T\ar[u]^\eta\Epi^-\pi&\kx\ar@{=}[u]\\
\kx\;\ar@{=}[u]\Mono^-\psi&\kx\times\kx\ar[u]^\iso_\alpha\Epi^-\pi&\kx\ar@{=}[u]\ar@<1ex>[l]^-\sigma
}
$$
This proves that $H\iso SL_2(k)\rtimes\kx$. To compute the action, choose $\lambda,\mu$ such that $\lambda s+\mu t=1$
and let $r=\lambda m-\mu n$. It is easy to see that $r$ is even if and only if $n,m$ are both even, i.\ e.\ if $H$ is
split. Then we conclude that the action of $\kx$ on $SL_2(k)$ is given by $$\zeta\cdot\Matrix xyzt=\Matrix
x{\zeta^ry}{\zeta^{-r}z}t$$ and then \ref{semidirectmonic} proves the first part of this proposition.

To compute the matrix $M$, notice that $\delta$ is given by
$\delta(g)=(\phi^{-1}(g\,\sigma\pi(g)^{-1}),\pi(g))$. Then, some diagram chasing yields

$$
M=\begin{cases}\Matrix{\lambda-r't}{-\lambda+r't-t}{\mu+sr'}{-\mu-sr'+s},\;r=2r'-1,\text{ $H$ not split;}\\
\Matrix{\lambda-r't}{-t}{\mu+r's}s,\;r=2r',\text{ $H$ split.}
\end{cases}
$$
It is easy to see that these matrices coincide with the ones in (2). Knowing these explicit values for $M$, the
equality in (3) is immediate.
\end{proof}

Consider now the case in which $H$ is not monic. This means that $\ell\ne2$ and $s=2s'$, $t=2t'$.

\begin{Prop}\label{extensionsnotmonic}
Assume $H$ is not monic and consider $\psi\colon \kx\to\kx\times\kx$ given by $\psi(\zeta)=(\zeta^s,\zeta^t)$. Let
$d=(q-1)/2$. There is an exact sequence of abelian groups
$$\xymatrix{\kx/\{\pm1\}\;\Mono^-{\psi'}&\kx\times\kx\Epi^-{\pi'}&\kx\times\{\pm1\}}$$
where $\psi'$ is induced by $\psi$ and $\pi'$ is given by $\pi'(\zeta,\tau)=(\zeta^{-t'}\tau^{s'},(\zeta^n\tau^m)^d)$.
\end{Prop}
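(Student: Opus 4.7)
The plan is to verify the three pieces of exactness directly, everything being driven by one arithmetic identity. Since $H$ is not monic, $\ell\neq 2$ and $s=2s'$, $t=2t'$; the diagonal entry of the matrix equation that defines $A$ (restricted to the row corresponding to the subscript we have suppressed) reads $sn+tm=2$, so dividing by $2$ gives $s'n+t'm=1$, and in particular $\gcd(s',t')=1$. Every computation below will turn on this identity.

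I would begin by checking that $\psi'$ is well-defined and injective. Since $(-1)^s=(-1)^{2s'}=1$ and similarly for $t$, $\psi$ factors through $\kx/\{\pm 1\}$. If $\zeta^{2s'}=\zeta^{2t'}=1$, then $\zeta^{\gcd(2s',2t')}=\zeta^2=1$, so $\zeta\in\{\pm 1\}$ using $\ell\neq 2$; hence $\psi'$ is injective. Next, the second coordinate of $\pi'$ indeed lands in $\{\pm 1\}$ because its square equals $(\zeta^n\tau^m)^{q-1}=1$. The composite vanishes by direct computation:
$$\pi'\bigl(\zeta^{2s'},\zeta^{2t'}\bigr)=\bigl(\zeta^{-2s't'+2s't'},\zeta^{2(s'n+t'm)d}\bigr)=(1,\zeta^{q-1})=(1,1).$$

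For exactness at the middle, suppose $(\zeta,\tau)$ satisfies $\zeta^{-t'}\tau^{s'}=1$ and $(\zeta^n\tau^m)^d=1$. I would pick $\lambda,\mu$ with $\lambda s'+\mu t'=1$ and set $\eta=\zeta^\lambda\tau^\mu$. Using $\tau^{s'}=\zeta^{t'}$, a short computation gives $\eta^{s'}=\zeta$ and $\eta^{t'}=\tau$. Then $\zeta^n\tau^m=\eta^{s'n+t'm}=\eta$, so the second hypothesis becomes $\eta^d=1$. Since $\kx$ is cyclic of order $q-1=2d$, the kernel of $x\mapsto x^d$ is exactly the subgroup of squares; writing $\eta=\xi^2$ yields $(\zeta,\tau)=(\xi^{2s'},\xi^{2t'})=\psi(\xi)\in\operatorname{Im}\psi'$.

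Finally, for surjectivity of $\pi'$, the first-coordinate map already hits all of $\kx$: its image contains both $(\kx)^{s'}$ and $(\kx)^{t'}$, whose product in the cyclic group $\kx$ is $(\kx)^{\gcd(s',t')}=\kx$. On the kernel $\{(\eta^{s'},\eta^{t'}):\eta\in\kx\}$ of this first coordinate, the second coordinate evaluates to $\eta^{s'n+t'm\cdot d}=\eta^d$, which surjects onto $\{\pm 1\}$, so $\pi'$ is surjective. The one step with real content is the middle exactness: one must use B\'ezout to exhibit the element of $\kx/\{\pm 1\}$ mapping to $(\zeta,\tau)$, and then interpret the constraint $\eta^d=1$ as exactly the squareness condition that allows $\eta\in\kx$ to be traded for a representative $\xi\in\kx/\{\pm 1\}$; the rest is bookkeeping.
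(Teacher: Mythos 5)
Your proof is correct, and it follows the same overall plan as the paper's: a direct verification of injectivity of $\psi'$, of $\pi'\psi'=1$, of surjectivity of $\pi'$, and of exactness in the middle, all driven by the identity $s'n+t'm=1$ coming from the diagonal of the defining matrix equation. The difference lies in how the one substantive step, exactness at $\kx\times\kx$, is handled. The paper argues by cases on the parities of $n$ and $m$: if $n$ is even then $m$ is odd and $(\zeta^n\tau^m)^d=1$ forces $\tau$ to be a square, after which a preimage is written down by hand; the case $n,m$ both odd requires a separate quadratic-residue argument combining $\zeta^{t'}=\tau^{s'}$ with $ns'+mt'=1$. Your B\'{e}zout construction $\eta=\zeta^{\lambda}\tau^{\mu}$ with $\lambda s'+\mu t'=1$ shows in one stroke that every $(\zeta,\tau)$ with $\zeta^{t'}=\tau^{s'}$ has the form $(\eta^{s'},\eta^{t'})$ and that $\zeta^n\tau^m=\eta$, so the second condition becomes literally the statement that $\eta$ is a square; this uniformizes the paper's three cases and is arguably cleaner. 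For surjectivity the two arguments carry the same content: your generator of the kernel of the first coordinate with $\eta^d=-1$ is exactly the paper's explicit preimage $(\theta^{s'},\theta^{t'})$ of $(1,-1)$, and the paper's $(\zeta^{-m},\zeta^{n})$ plays the role of your observation that the first coordinate is onto because $\gcd(s',t')=1$. All the supporting facts you invoke (that $\ker(x\mapsto x^d)$ equals the subgroup of squares in the cyclic group of order $2d$, that $\gcd(2s',2t')=2$, that the second coordinate of $\pi'$ lands in $\{\pm1\}$) check out.
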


\begin{proof}
 Since $H$ is not monic, we have that $q$ is odd and $(s,t)=2$. We know that $ns'+mt'=1$. The injectivity of $\psi'$ as
well as the identity $\pi'\psi'=1$ are clear. Notice that $(\zeta,1)=\pi'(\zeta^{-m},\zeta^n)$. Also, if $\theta$ is a
generator of $\kx$, we have $(1,-1)=\pi'(\theta^{s'},\theta^{t'})$. This proves the surjectivity of $\pi'$. Assume
$\pi'(\zeta,\tau)=(1,1)$. If $n$ is even, then $m$ is odd and this implies that $\tau$ is a square $\tau=\gamma^2$.
Then, $(\zeta,\tau)=\psi'(\zeta^{n/2}\gamma^m)$. The case $m$ even is similar. Finally, if $n,m$ are both odd, we have
$(\zeta,\tau)^d=1$ and so $\zeta,\tau$ are both squares or both non squares. If $\zeta=\delta^2$, $\tau=\gamma^2$, then
$(\zeta,\tau)=\psi'(\delta^n\gamma^m)$. If $\zeta$, $\tau$ were both non squares, the equality $\zeta^{t'}=\tau^{s'}$
is in contradiction to $ns'+mt'=1$ with $n,m$ odd.
\end{proof}

Notice that this exact sequence is always split over $\kx\times\{1\}$, a section being given by
$\sigma(\zeta,1)=(\zeta^{-m},\zeta^n)$. But one sees easily that this exact sequence is not always split over
$\{1\}\times\{\pm1\}$.

\begin{Prop}\label{parabolicnotmonic}
Assume $H$ is not monic. Let $\eta\colon T\to PGL_2(k)\times\kx$ given by
$\overline{\eta}(\zeta,\tau)=\left(\left[\psMatrix\zeta001\right],\tau\right)$ and let
$\overline\omega\colon \Lambda\to\Lambda$ be given by $\overline\omega(e,v)=(-e,v)$. Then
\begin{enumerate}
\item There is an isomorphism $\gamma\colon H\iso PGL_2(k)\times\kx$ such that the following diagram is commutative.
$$\xymatrix{
T\ar[r]^{\eta}\ar[d]_{\gamma|_T}&H\ar[d]_\iso^\gamma\\
T\ar[r]^(0.3){\overline\eta}&PGL_2(k)\times\kx
}
$$
 \item $\gamma|_T$ is induced by $M\colon \Lambda\to\Lambda$ given by $$M=\Matrix{n}{-t/2}{m}{s/2}.$$
\item $M^{-1}\omega M=\overline\omega$.
\end{enumerate}
\end{Prop}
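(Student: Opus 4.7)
The plan is to follow the strategy of Proposition \ref{parabolicmonic}, now substituting \ref{extensionsnotmonic} for \ref{extensionsmonic} and keeping track of the now-nontrivial kernel of $\phi$. Since $s,t$ are both even, formula $(\ast)$ yields $\phi\psMatrix{-1}{0}{0}{-1}=\eta\psi(-1)=1$, so $\phi$ factors through $PSL_2(k)\hookrightarrow PGL_2(k)$, and the defining identity $\langle h,\alpha\rangle=2$ becomes the key relation $s'n+t'm=1$.

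First I realise $H$ as a central quotient of a semidirect product. Form $SL_2(k)\rtimes T$ with $T$ acting on $SL_2(k)$ as in $(\ast\ast)$, that is, by conjugation with $\psMatrix{\zeta^n\tau^m}{0}{0}{1}$. The map $(M,t)\mapsto\phi(M)\eta(t)$ is then a surjective homomorphism onto $H$, and chasing the conditions $\phi(M)=\eta(t)^{-1}$ through $(\ast)$ and the injectivity of $\eta$ identifies its kernel as the central subgroup
\[
K=\Bigl\{\Bigl(\psMatrix{\xi^{-1}}{0}{0}{\xi},\,\psi(\xi)\Bigr):\xi\in\kx\Bigr\}.
\]

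Next I define $\tilde\gamma\colon SL_2(k)\rtimes T\to PGL_2(k)\times\kx$ by
\[
\tilde\gamma\bigl(M,(\zeta,\tau)\bigr)=\Bigl(\,\Bigl[M\cdot\psMatrix{\zeta^n\tau^m}{0}{0}{1}\Bigr]\,,\,\zeta^{-t'}\tau^{s'}\,\Bigr).
\]
A short computation, in which the conjugation action on $SL_2(k)$ is absorbed upon passing to $PGL_2$, shows $\tilde\gamma$ is a homomorphism, and the relation $s'n+t'm=1$ then forces $\tilde\gamma(K)=1$. Hence $\tilde\gamma$ descends to the desired $\gamma\colon H\to PGL_2(k)\times\kx$. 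For injectivity, if $\tilde\gamma(M,(\zeta,\tau))=(I,1)$ then $M\psMatrix{\zeta^n\tau^m}{0}{0}{1}=cI$ for some $c\in\kx$ with $c^2=\zeta^n\tau^m$, so $\zeta^n\tau^m$ is a square in $\kx$; combined with $\zeta^{-t'}\tau^{s'}=1$, these are exactly the two coordinates of $\pi'(\zeta,\tau)=(1,1)$ in \ref{extensionsnotmonic}, so $(\zeta,\tau)\in\psi(\kx)$ and $(M,(\zeta,\tau))\in K$. Surjectivity then follows either from an order count or from the observation that $(\zeta,\tau)\mapsto(\zeta^n\tau^m,\zeta^{-t'}\tau^{s'})$ has integer matrix of determinant $s'n+t'm=1$, hence is an automorphism of $\kx\times\kx$. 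Reading off $\gamma\circ\eta$ from the formula gives (2) with $M=\psMatrix{n}{-t/2}{m}{s/2}$, and (3) reduces to a direct $2\times 2$ matrix computation using $\omega=I-\alpha h^{\mathrm T}$ together with $s'n+t'm=1$.

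The main obstacle is that, unlike in the monic case, the exact sequence of \ref{extensionsnotmonic} need not split over $\{1\}\times\{\pm 1\}$, so $H$ cannot simply be presented as a semidirect product along a section. Working with the quotient $(SL_2(k)\rtimes T)/K$ bypasses this, and the square-versus-non-square information packaged in the second component of $\pi'$ is precisely what forces the target to be $PGL_2(k)\times\kx$ rather than one of the split forms of \ref{parabolicmonic}.
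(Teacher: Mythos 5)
Your proof is correct, but it takes a genuinely different route from the paper. The paper proceeds via the exact sequence $PSL_2(k)\rightarrowtail H\twoheadrightarrow\kx\times\{\pm1\}$ obtained from \ref{extensionsnotmonic}: although the bottom sequence there need not split over $\{1\}\times\{\pm1\}$, the authors show the top one does by exhibiting an explicit involution $\beta=\phi\psMatrix0{-1}10\eta\alpha(\theta^{s'},\theta^{t'})$, compute the resulting action of $\kx\times\{\pm1\}$ on $PSL_2(k)$, and recognize that extension as $PGL_2(k)\times\kx$ via the determinant-class homomorphism $\epsilon\colon PGL_2(k)\to\{\pm1\}$; the matrix $M$ is then extracted by a diagram chase through $\delta(g)=(\phi'^{-1}(g\sigma(\pi'(g)^{-1})),\pi'(g))$. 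You instead present $H$ as the central quotient $(SL_2(k)\rtimes T)/K$ and write down $\gamma$ by an explicit formula, so the splitting question never arises and \ref{extensionsnotmonic} enters only at the injectivity step (to identify $\ker\pi'$ with $\operatorname{im}\psi$). This buys a cleaner derivation of (2), which is read off directly from the formula for $\tilde\gamma$ rather than recovered from an omitted computation. Two small points: first, your identification of the kernel $K$, like the exactness of the rows in the paper's diagram, silently uses that $\eta(T)\cap\phi(SL_2(k))$ is exactly the image of the diagonal torus --- worth stating, though the paper assumes the same; second, $\tilde\gamma(K)=1$ actually follows from $ts'-st'=0$ and $[\xi I]=[I]$ in $PGL_2(k)$, not from $s'n+t'm=1$, which is what you need for surjectivity and for part (3).
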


\begin{proof}
Since $H$ is not monic, $\phi\colon SL_2(k)\to H$ factors through a monomorphism
$\phi'\colon PSL_2(k)\to H$ which fits into a commutative diagram
$$
\xymatrix{
PSL_2(k)\;\Mono^-{\phi'}&H\Epi^-{\pi'}&\kx\times\{\pm1\}\\
\kx/\{\pm1\}\ar[u]\;\Mono^-{\phi'}&T\ar[u]^\eta\Epi^-{\pi'}&\kx\times\{\pm1\}\ar@{=}[u]\\
\kx/\{\pm1\}\;\ar@{=}[u]\Mono^-{\psi'}&\kx\times\kx\ar[u]^\iso_\alpha\Epi^-{\pi'}&\kx\times\{\pm1\}\ar@{=}[u]
}
$$
where the bottom row coincides with the exact sequence in \ref{extensionsnotmonic}. This bottom exact sequence is split
over $\kx\times\{1\}$ and a section is $\sigma(\zeta,1)=(\zeta^{-m},\zeta^n)$. In general, it is not split over
$\{1\}\times\{\pm1\}$. However, the top exact sequence is split. To see this, consider the element
$$\beta=\phi\psMatrix0{-1}10\eta\alpha(\theta^{s'},\theta^{t'})\in H$$ where $\theta\in\kx$ is a generator. Then
$\pi'(\beta)=(1,-1)$ and an easy computation using formulas $(\ast)$ and $(\ast\ast)$ in section \ref{Tits} shows that
$\beta^2=1$. Hence, $H$ is a semidirect product $H\iso PSL_2(k)\rtimes(\kx\times\{\pm1\})$.

Then, if we compute the induced action of $\kx\times\{\pm1\}$ on $PSL_2(k)$, it turns out that $\kx$ acts trivially,
while $\{\pm1\}$ acts through $$\psMatrix xyzt\mapsto\psMatrix t{-\zeta^{-1}z}{-\zeta y}x.$$It is easy to identify this
extension. Consider the homomorphism $\epsilon\colon PGL_2(k)\to\{\pm1\}$ which sends to $-1$ all matrices whose determinant
is not a square. The kernel of $\epsilon$ is $PSL_2(k)$ and we have an extension $$\xymatrix{
PSL_2(k)\;\Mono^-{\phi'}&PGL_2(k)\Epi^-{\epsilon}&\{\pm1\}}.$$This extension has a section given by
$-1\mapsto\psMatrix0{-\zeta^{-1}}10$ and the action of $\{\pm1\}$ on $PSL_2(k)$ is exactly the same as above. This
proves that $H\iso PGL_2\times\kx$.

We want to compute now the homomorphism
$$\xymatrix{\kx\times\kx\ar[r]^-\alpha
&T\ar[r]^\eta
&H\ar[r]^-\delta
&PSL_2(k)\rtimes(\kx\times\{\pm1\})\ar[r]
&PGL_2(k)\times\kx.}$$

We omit the details of this computation which is straightforward once we have an explicit section of the extension and we recall
that $\delta$ is given by $$\delta(g)=(\phi'^{-1}(g\sigma(\pi'(g)^{-1})),\pi'(g)).$$ Once we have the matrix $M$, the equality in
(3) follows immediately.
\end{proof}

\section{The Weyl group and its invariants }\label{Weylsec}

The Weyl group $W$ of the Kac-Moody group $G$ is an infinite dihedral group.
There are two relevant integral representation of $W$. First, there is the action of $W$ on $\Q^2$ given by the root
system, as studied in section \ref{roots}. On the other side, $W$ acts on the lattice $\Lambda$ by
$\omega_i(\lambda)=\lambda-\langle h_i,\lambda\rangle\,\alpha_i$ for $i=1,2$. In this section we are interested in the
representation given by the action of $W$ on $\Lambda\otimes\padic$.

The integral $p$-adic representations of an infinite dihedral group were classified in \cite{dihedral} (see also
\cite{arboreal}) using some explicit invariants which can be easily computed. Let us denote by $\nu_p$ the $p$-adic
valuation.

\begin{Thm}\label{dihedral}
(\cite{dihedral},\cite{arboreal}) Assume $p$ is an odd prime. There are invariants $\Gamma(\rho)\in\padic$,
$\delta_1(\rho),\delta_2(\rho)\in\{0,1,2,\ldots,\infty\}$ which classify all representations $\rho\colon W\to GL_2(\padic)$. These
invariants can take any value in their range, subject to the relation $\delta_1+\delta_2=\nu_p(\Gamma)+\nu_p(\Gamma-1)$. If a
representation $\rho$ is given by $\omega_1=\psMatrix100{-1}$ and $\omega_2=M^{-1}\psMatrix100{-1}M$ with $M=\psMatrix xyzt$ (any
representation is like that), then $\Gamma(\rho)=xt/\det(M)$, $\delta_1(\rho)=\nu_p(xz)$ and $\delta_2(\rho)=\nu_p(yt)$. \qed
\end{Thm}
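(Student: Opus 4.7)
The plan is to exploit the fact that $W$ is generated by two involutions $\omega_1,\omega_2$, so a representation $\rho\colon W\to GL_2(\padic)$ is no more than a pair of involutions in $GL_2(\padic)$, subject only to $\omega_i^2=1$. Since $p$ is odd, every non-scalar involution in $GL_2(\padic)$ has eigenvalues $\pm 1$ in $\padic$ and is therefore conjugate to $\psMatrix{1}{0}{0}{-1}$. After a global conjugation one arranges $\rho(\omega_1)=\psMatrix{1}{0}{0}{-1}$ and then $\rho(\omega_2)=M^{-1}\psMatrix{1}{0}{0}{-1}M$ for some $M=\psMatrix{x}{y}{z}{t}\in GL_2(\padic)$. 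This normal form leaves $M$ ambiguous up to the action $M\mapsto D_1 M D_2$ with $D_1,D_2$ invertible diagonal matrices: $D_1$ arises because $M$ is unique only up to left multiplication by the centralizer of $\psMatrix{1}{0}{0}{-1}$, and $D_2$ from further diagonal conjugations of $\rho$. The degenerate cases $\rho(\omega_i)=\pm I$ yield reducible representations that must be handled separately.

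\emph{Invariance and the relation.} A direct computation shows that $\Gamma=xt/\det M$, $\delta_1=\nu_p(xz)$, and $\delta_2=\nu_p(yt)$ are all fixed under $M\mapsto D_1 M D_2$, since the entries of $D_1,D_2$ are $p$-adic units. Because $\det M\in\padic^{\times}$, one has $\nu_p(\Gamma)=\nu_p(xt)$ and $\nu_p(\Gamma-1)=\nu_p(-yz/\det M)=\nu_p(yz)$, so $\nu_p(\Gamma)+\nu_p(\Gamma-1)=\nu_p(xyzt)=\delta_1+\delta_2$. As a sanity check, $\mathrm{tr}(\rho(\omega_1\omega_2))$ works out to $4\Gamma-2$, confirming that $\Gamma$ is a natural conjugation invariant.

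\emph{Realizability.} For any $\Gamma\in\padic$ and $\delta_1,\delta_2$ satisfying the relation, one builds $M$ explicitly: set $\det M=1$, factor $\Gamma=xt$ and $1-\Gamma=-yz$, and distribute the $p$-adic valuations among $x,z$ (respectively $y,t$) so that $\nu_p(xz)=\delta_1$ and $\nu_p(yt)=\delta_2$. The constraint $\delta_1+\delta_2=\nu_p(\Gamma)+\nu_p(\Gamma-1)$ is exactly what guarantees that such a factorization is always possible.

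\emph{Completeness.} The hard step, and the expected main obstacle, is to show that two matrices $M,M'$ giving the same triple $(\Gamma,\delta_1,\delta_2)$ are related by $M'=D_1MD_2$. The strategy is to use the $T(\padic)\times T(\padic)$-action to reduce $M$ to a canonical form in which $x$ and $y$ are prescribed powers of $p$ matching $\delta_1,\delta_2$; then $z$ and $t$ are forced by $\Gamma$ together with $\det M=1$. The delicate points are the case analysis according to how the valuation is distributed among the four entries, and the boundary values $\delta_i=\infty$ (some entry vanishing, producing reducible $\rho$), which account for the technical bulk of the argument given in \cite{dihedral,arboreal}.
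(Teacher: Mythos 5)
First, a point of comparison: the paper does not prove this statement at all --- it is imported verbatim from \cite{dihedral} and \cite{arboreal} and closed with a \verb|\qed|. So there is no in-paper argument to measure your proposal against; you are reconstructing a proof the authors deliberately outsourced. That said, your outline is essentially the right one, and the parts you carry out in detail are correct: for $p$ odd the idempotents $(1\pm A)/2$ split $\padic^2$ into free rank-one summands, so every non-central involution is $GL_2(\padic)$-conjugate to $\psMatrix100{-1}$; the residual ambiguity in $M$ is exactly $M\mapsto D_1MD_2$; the quantities $\Gamma$, $\nu_p(xz)$, $\nu_p(yt)$ are invariant under that action; $\Gamma-1=yz/\det M$ gives the relation $\delta_1+\delta_2=\nu_p(\Gamma)+\nu_p(\Gamma-1)$; and $\operatorname{tr}\rho(\omega_1\omega_2)=4\Gamma-2$ (consistent with the polynomial $X^2-2(2\Gamma-1)X+1$ used later in the paper).

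The one place where your sketch of completeness would not close as written is the proposed normalization ``reduce $M$ so that $x$ and $y$ are prescribed powers of $p$.'' The action $M\mapsto D_1MD_2$ multiplies each entry by a unit, hence \emph{preserves the valuation of each individual entry}; you cannot move valuation between entries, so this is a normalization of unit parts only, and a priori the quadruple $(\nu_p(x),\nu_p(y),\nu_p(z),\nu_p(t))$ looks like a finer invariant than $(\delta_1,\delta_2)=(\nu_p(xz),\nu_p(yt))$. Two observations rescue the argument, and you need to make one of them explicit. Either: since $\det M=xt-yz$ is a unit, at most one of $\nu_p(\Gamma)=\nu_p(xt)$ and $\nu_p(\Gamma-1)=\nu_p(yz)$ is positive, and this forces the four individual valuations to be determined by the triple $(\Gamma,\delta_1,\delta_2)$ (e.g.\ if $\nu_p(\Gamma-1)=0$ then $\nu_p(y)=\nu_p(z)=0$, $\nu_p(x)=\delta_1$, $\nu_p(t)=\delta_2$), after which matching unit parts via $D_1,D_2$ reduces to solving $ac=x'/x$, $ad=y'/y$, $bc=z'/z$, $bd=t'/t$, which is possible precisely because $\Gamma=\Gamma'$. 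Or, more directly: compute
$$\rho(\omega_2)=\frac{1}{\det M}\Matrix{xt+yz}{2yt}{-2xz}{-(xt+yz)},$$
so the representation depends on $M$ only through the products $xt$, $yz$, $xz$, $yt$ modulo $\det M$, and diagonal conjugation acts on the pair $(xz,yt)$ by $(u\,xz,\,u^{-1}yt)$; completeness is then a one-line verification. Without one of these two observations the case analysis you defer to the references is not merely technical --- it is where the theorem's claim that only the \emph{products} $xz$ and $yt$ matter gets justified.
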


There is also a similar result for $p=2$ which is more complex and will not be needed in this work (cf.\
\cite{dihedral},\cite{arboreal}).

Let us apply this result to classify the action of $W$ on $\Lambda\otimes\padic$. Recall from section \ref{Tits} the
meaning of the integers $a$, $b$, $\Delta$, $\nabla$.

\begin{Prop}\label{Weyl}
The $p$-adic representation of the Weyl group of $G$ given by the action of $W$ on $\Lambda\otimes\padic$
is classified by $\Gamma=ab/4$, $\delta_1=\nu_p(a)+\nu_p(\Delta)$, $\delta_2=\nu_p(b)+\nu_p(\nabla)$.
\end{Prop}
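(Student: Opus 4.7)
The plan is to compute the matrices of $\omega_1$ and $\omega_2$ acting on $\Lambda\otimes\padic$ explicitly, conjugate so that $\omega_1$ takes the normal form in Theorem~\ref{dihedral}, and then read off the invariants from the resulting $M$.

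First, from the formula $\omega_i(\lambda)=\lambda-\langle h_i,\lambda\rangle\alpha_i$ and the defining relations $\alpha_i=n_ie_1+m_ie_2$, $h_i=s_ie_1^\vee+t_ie_2^\vee$, I would write each $\omega_i$ as a $2\times 2$ matrix in the basis $\{e_1,e_2\}$. The identities $s_in_i+t_im_i=A_{ii}=2$ show each $\omega_i$ is a reflection (trace $0$, determinant $-1$), with $\omega_i(\alpha_i)=-\alpha_i$ and fixed vector proportional to $t_ie_1-s_ie_2$ (killed by $h_i$).

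Next, I would diagonalize $\omega_1$ by the change of basis whose columns are the $+1$ and $-1$ eigenvectors:
\[
P=\Matrix{t_1}{n_1}{-s_1}{m_1},\qquad \det P=s_1n_1+t_1m_1=2.
\]
Since $p$ is odd, $2\in\padic^\times$, so $P\in GL_2(\padic)$ is a legitimate change of basis; in the new basis $\omega_1=\psMatrix100{-1}$. I then express the eigenvectors of $\omega_2$ in this new basis. Using $\langle h_1,\alpha_2\rangle=A_{12}=-a$ and $\langle h_2,\alpha_1\rangle=A_{21}=-b$, a short computation with $P^{-1}=\tfrac12\psMatrix{m_1}{-n_1}{s_1}{t_1}$ gives
\[
P^{-1}(t_2e_1-s_2e_2)=\tfrac12\begin{psmatrix}-b\\ \Delta\end{psmatrix},\qquad P^{-1}\alpha_2=\tfrac12\begin{psmatrix}-\nabla\\ -a\end{psmatrix}.
\]
Collecting these as columns gives the diagonalizer $Q$ for $\omega_2$, and the relation $\Delta\nabla=4-ab$ yields $\det Q=\tfrac14(ab+\Delta\nabla)=1$. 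Setting $M=Q^{-1}$ then brings $\omega_2$ into the form $M^{-1}\psMatrix100{-1}M$ required by Theorem~\ref{dihedral}, with
\[
M=\Matrix{-a/2}{\nabla/2}{-\Delta/2}{-b/2}.
\]

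Finally, I would substitute into the formulas of Theorem~\ref{dihedral}: $\Gamma=xt/\det M=ab/4$, $\delta_1=\nu_p(xz)=\nu_p(a\Delta/4)$ and $\delta_2=\nu_p(yt)=\nu_p(b\nabla/4)$; because $p$ is odd the factor of $4$ is invisible to $\nu_p$, yielding exactly $\delta_1=\nu_p(a)+\nu_p(\Delta)$ and $\delta_2=\nu_p(b)+\nu_p(\nabla)$. No step is truly difficult; the only point requiring care is checking that the halves appearing in $P^{-1}$ and $M$ do not affect the $p$-adic valuations, which is precisely the hypothesis $p\neq 2$ used throughout the paper.
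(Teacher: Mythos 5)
Your proposal is correct and follows essentially the same route as the paper: the paper also diagonalizes each $\omega_i$ via the eigenvector matrix $P_i=\begin{psmatrix}t_i&n_i\\-s_i&m_i\end{psmatrix}$ of determinant $2$ and takes $M=P_2^{-1}P_1=\tfrac12\begin{psmatrix}-a&\nabla\\-\Delta&-b\end{psmatrix}$, which is exactly your $Q^{-1}$, before reading off $\Gamma$, $\delta_1$, $\delta_2$ from Theorem~\ref{dihedral}. The computations and the use of $p\neq2$ to discard the factors of $4$ all check out.
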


\begin{proof}
The representation is given by the matrices
$$\omega_i=\Matrix{1-s_in_i}{-t_in_i}{-s_in_i}{1-t_in_i},\quad i=1,2.$$
The matrices $P_i=\psMatrix{t_i}{n_i}{-s_i}{m_i}$ for $i=1,2$ have determinant 2 and satisfy
$P_i^{-1}\omega_iP_i=\psMatrix100{-1}$. Hence, the matrix $M$ in \ref{dihedral} is
$M=P_2^{-1}P_1=\frac12\psMatrix{-a}\nabla{-\Delta}{-b}$.
\end{proof}

The action of $W$ on $\Lambda$ gives an action of $W$ on $T=\Hom(\Lambda,\kx)$ and on $H^1(T;\Fp)$. Of course, $H^1(T;\Fp)$
vanishes unless $q\equiv1\,(p)$ and in this case $H^1(T;\Fp)=\Fp^2$ and we have a representation of $W$ in
$GL_2(\Fp)$ which is the reduction modulo $p$ of the $p$-adic representation of $W$ studied in \ref{Weyl} and factors
through some finite dihedral group $W_p$. We are interested in these representations and in their rings of invariants.
The mod $p$ reductions of the representations of an infinite dihedral group in $GL_2(\padic)$ were studied in
\cite{Kyoto} (see Table 1 in \cite{Kyoto}). Let us summarize the results of \cite{Kyoto} that we need here. We denote by
$P$ a polynomial algebra $P=\Fp[x,y]$ with two generators of degree 2, with the given action action of $W_p$. There are
six types of representations:

\begin{itemize}
\item Type \textbf{I}. $\Gamma\con0\;(p)$, $\delta_1,\delta_2>0$. $W_p$ is elementary abelian of
order 4 and its invariants are $P^W=\Fp[x^2,y^2]$.
\item Type \textbf{II}. A representation in this type is given, up to the outer automorphism of $W$,
by $\Gamma\con0\;(p)$, $\delta_1=0,\delta_2>0$. $W_p$ is a dihedral group of order $4p$ and its invariants $P^{W_p}$ form a
polynomial ring on generators $y^2$, $x^2(x^{p-1}-y^{p-1})^2$.
\item Type \textbf{III}. $W_p$ is of order two. $\Gamma\not\con0\;(p)$,
$\delta_1,\delta_2>0$, $P^W=\Fp[x,y^2]$.
\item Type \textbf{IV}. This type occurs when $\Gamma\not\con0\;(p)$,
$\delta_1=0$, $\delta_2>0$. $W_p$ turns out to be a dihedral group of order $2p$ in $GL_2(\Fp)$
which is studied in pages 128--129 of
\cite{LarryBook}. The invariants are polynomial in degrees 2 and $4p$ namely $P^W=\Fp[x,(yx^{p-1}-y^p)^2]$.
\item Type \textbf{V}. This type occurs when $\Gamma\not\con0\;(p)$, $\delta_1>0$,
$\delta_2=0$. $W_p$ turns out to be a dihedral group of order $2p$ in $GL_2(\Fp)$
which is also studied in pages 128--129 of \cite{LarryBook}.
The invariants are polynomial in degrees 4 and $2p$ namely $P^W=\Fp[y^2,x(x^{p-1}-y^{p-1})]$.
\item Type \textbf{VI}. This type occurs when $\Gamma\not\con0\;(p)$,
$\delta_1=\delta_2=0$. The order $2m$ of the dihedral group $W_p$ is twice the multiplicative order of the roots (in
$\mathbb{F}_{p^2}$) of the polynomial $X^2-2(2\Gamma-1)X+1$. Thus, $p\con\pm1\,(m)$.
In particular, the order of $W_p$ is prime to $p$ and then
the invariants $P^W$ form a polynomial ring on generators $z_4$, $t_{2m}$ of degrees 4 and $2m$, respectively. (cf.\
\cite{Kyoto}.)
\end{itemize}

Let us study in more detail the invariant theory of the representations of type VI. What follows is probably
known, but we have not been able to find references for everything that we need in this paper. Hence, this may have
some independent interest.

\begin{Prop}\label{typeVI}
Let $\rho$ be the representation of type VI of the dihedral group of order $2m$ in $GL_2(\padic)$ classified by
$\Gamma\in\padic$. Let $\theta\in\F_{p^2}$ be a primitive $m$-th root of unity. Then, there is a basis $x,y$ such that
the ring of invariants $\Fp[x,y]^{D_{2m}}= \Fp[f,g]$ where the polynomials $f,g$ can be chosen as follows:
\begin{enumerate}
\item If $p\con1\,(m)$, then $f=xy$, $g=x^m+y^m$.
\item If $p\con-1\,(m)$, then $f=x^2+(2-4\Gamma)xy+y^2$.
\item If $p\con-1\,(m)$, let $u=\theta x-y$, $v=\theta y-x$. Then, $g=u^m+v^m$ if $m$ is even, $m\ne2$, and
$g=(u^m+v^m)/(\theta-\theta^{-1})$ if $m$ is odd.
\end{enumerate}
\end{Prop}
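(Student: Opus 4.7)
Since $p$ is odd and $p\equiv\pm1\,(m)$, one has $p\nmid 2m=|D_{2m}|$, so the representation is tame and by Chevalley--Shephard--Todd the invariant ring is a polynomial ring on two homogeneous generators whose degrees multiply to $2m$. Hence it suffices to exhibit algebraically independent invariants in degrees $2$ and $m$ of the stated forms.

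The plan is to diagonalize the rotation $r=\omega_1\omega_2$ over the splitting field $\F_{p^2}$, compute invariants there, and descend to $\Fp$ by Galois theory. Over $\F_{p^2}$, $r$ has distinct eigenvalues $\theta,\theta^{-1}$ with eigenforms $\xi,\eta$. Any reflection $s$ satisfies $srs=r^{-1}$ and hence swaps the two eigenlines; after rescaling $\eta$ one may take $s\xi=\eta$ and $s\eta=\xi$. A monomial count then yields $\F_{p^2}[\xi,\eta]^{D_{2m}}=\F_{p^2}[\xi\eta,\xi^m+\eta^m]$. When $p\equiv1\,(m)$, $\theta\in\Fp$ so $\xi,\eta$ are $\Fp$-rational, and renaming them $x,y$ gives~(1).

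Assume henceforth $p\equiv-1\,(m)$, so that $\theta\in\F_{p^2}\setminus\Fp$ and the Frobenius $\sigma$ interchanges $\theta$ and $\theta^{-1}$. Choose an $\Fp$-basis $x,y$ of the representation in which $r$ has matrix $\psMatrix{\theta+\theta^{-1}}{1}{-1}{0}$; such a basis exists by a linear-algebra change of variables starting from the normal form of \ref{dihedral}, and in it a direct check shows that $u=\theta x-y$ and $v=\theta y-x$ are the $r$-eigenforms with eigenvalues $\theta,\theta^{-1}$. Computing the product, $uv=-\theta\bigl(x^2+(2-4\Gamma)xy+y^2\bigr)=-\theta f$, which differs from the $\Fp$-polynomial $f$ by a unit in $\F_{p^2}$; hence $f$ spans the degree-$2$ invariants over $\Fp$, giving~(2). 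For~(3), expand $u^m+v^m$ using $\theta^m=1$: the coefficient of $x^{m-k}y^k$ works out to $\binom{m}{k}(-1)^k\bigl(\theta^{-k}+(-1)^m\theta^k\bigr)$. Applying $\sigma$ yields $\sigma(u^m+v^m)=u^m+v^m$ when $m$ is even and $\sigma(u^m+v^m)=-(u^m+v^m)$ when $m$ is odd. Hence for $m$ even the element $u^m+v^m$ is itself $\Fp$-rational and serves as $g$; for $m$ odd, dividing by the Frobenius-antiinvariant scalar $\theta-\theta^{-1}\in\F_{p^2}^{\times}$ yields the $\Fp$-rational generator $(u^m+v^m)/(\theta-\theta^{-1})$. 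The case $m=2$ is excluded because there $\theta=-1\in\Fp$ places it under~(1), and separately $u=v$ would collapse $u^m+v^m$ to a scalar multiple of~$f$.

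The main obstacle is arranging the $\Fp$-basis so that $r$ takes the displayed matrix form and so that the chosen reflection acts on the eigenforms compatibly with $s\xi=\eta$ (so that the $\F_{p^2}$-invariants computation carries over without an extra scalar factor in the degree-$m$ term); this reduces to basis manipulations inside the classification of \ref{dihedral}. The remaining work---the coefficient calculation and its Frobenius analysis---is routine.
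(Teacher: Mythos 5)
Your argument is correct and follows essentially the same route as the paper's: extend scalars to $\F_{p^2}$, diagonalize the rotation so the invariants become $uv$ and $u^m+v^m$, then descend to $\Fp$ by checking that the binomial coefficients $\binom{m}{k}(-1)^k(\theta^{-k}+(-1)^m\theta^k)$ are Frobenius-invariant for $m$ even and anti-invariant for $m$ odd (the paper uses the explicit matrices $\omega=\psMatrix0110$, $\tau=\psMatrix\gamma1{-1}0$ and the change of basis $P=\psMatrix\theta{-1}{-1}\theta$ to carry out exactly the basis arrangement you defer to ``basis manipulations''). The only cosmetic difference is that you invoke the degree-product criterion from Chevalley--Shephard--Todd up front, whereas the paper simply exhibits the generators directly.
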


\begin{proof}
If $p\con1\,(m)$, then $\theta\in\Fp$ and the matrices
$$\omega=\Matrix0110,\quad\tau=\Matrix\theta00{\theta^{-1}}$$ generate the representation $\rho$. The computation of the
invariants of this representation is trivial: $$\Fp[x,y]^{D_{2m}}=\Fp[f,g],\quad f=xy,\,g=x^m+y^m.$$

If $m=p+1$, then $\theta\notin\Fp$, but $\theta^i+\theta^{-i}$ and $(\theta^i-\theta^{-i})/(\theta-\theta^{-1})$ are in
$\Fp$ for all values of $i$ (they are invariant under the Frobenius). Let $\gamma=\theta+\theta^{-1}$.
Actually, $\theta,\theta^{-1}$ are the roots of the irreducible polynomial $X^2-\gamma X+1$ and so $\theta\in\F_{p^2}$.
Also, $\gamma=4\Gamma-2$. Let us consider the matrices
$$\omega=\Matrix0110,\quad\tau=\Matrix\gamma1{-1}0,\quad\omega,\tau\in GL_2(\Fp).$$ If we extend scalars to
$\F_{p^2}$, then the change of basis given by $P=\psMatrix\theta{-1}{-1}\theta$ transforms $\omega,\tau$ into
$$P^{-1}\omega P=\Matrix0110,\quad P^{-1}\tau P=\Matrix\theta00{\theta^{-1}}$$ and we see that $\omega,\tau$ generate a
dihedral group $D_{2m}\subset GL_2(\Fp)$ which gives the representation $\rho$. Like before, the computation of the
invariants over $\F_{p^2}$ is trivial:
$$\F_{p^2}[u,v]^{D_{2m}}=\F_{p^2}[f,g],\quad f=uv,\,g=u^{p+1}+v^{p+1},$$ but we are interested in having explicit
descriptions of the invariants over $\Fp$.

We have $f=uv=-\theta(x^2-\gamma xy+y^2)$ and thus $\theta^{-1}f$ is a degree two invariant in $\Fp[x,y]^{D_{2m}}$. (Notice that
$\gamma^2-4=(\theta-\theta^{-1})^2$ is not a square in $\Fp$ and so no change of basis can produce a degree two
invariant of the form $xy$, in contrast to what is written in page 137 of \cite{LarryBook}.)

Let us consider now the invariant $g(u,v)$. If we write it in the basis $x,y$ we have
$$g=u^{p+1}+v^{p+1}=\sum_{i=0}^{p+1}(-1)^i\binom{p+1}i(\theta^i+\theta^{-i})\,x^iy^{p-i+1}.$$  Hence, $g\in\Fp[x,y]$ and
we have seen that $\Fp[x,y]^{D_{2m}}=\Fp[\theta^{-1}uv,u^{p+1}+v^{p+1}]$.

Consider now the general case $p\con-1\,(m)$, $m\ne2$. If $mr=p+1$, then $\omega,\tau^r\in GL_2(\Fp)$ produce the
representation $\rho$ of $D_{2m}$ in $GL_2(\Fp)$ and its invariants over $\F_{p^2}$ are generated by $f=uv$
and $g_m=u^m+v^m$. Like before, $x^2-\gamma xy+y^2$ is an invariant in degree two. The situation with respect to $g_m$
is slightly more complicated. If $m$ is even, then
$$g_m=u^m+v^m=\sum_{i=0}^m(-1)^i\binom{m}i(\theta^i+\theta^{-i})\,x^iy^{m-i}\in\Fp[x,y]$$ and
$\Fp[x,y]^{D_{2m}}=\Fp[x^2-\gamma xy+y^2,u^m+v^m]$ like before. On the other side, if $m$ is odd we
have $$g_m=u^m+v^m=\sum_{i=0}^m(-1)^{m-i}\binom{m}i(\theta^i-\theta^{-i})\,x^iy^{m-i}\notin\Fp[x,y]$$  and so
$g_m/(\theta-\theta^{-1})\in\Fp[x,y]$. Thus, we have $$\Fp[x,y]^{D_{2m}}=\Fp\left[x^2-\gamma
xy+y^2,\frac{u^m+v^m}{\theta-\theta^{-1}}\right].$$
\end{proof}

\section{Cohomology of the parabolic subgroups }\label{cohomologyH}

We have seen in \ref{acyclic} that the proper parabolic subgroups $P_I$ have the same mod
$p$ cohomology as their L\'{e}vi factors which are $T$, $H_1$, $H_2$. We have also seen in \ref{parabolicmonic} and
\ref{parabolicnotmonic} that the groups $H_1$, $H_2$ are isomorphic to $GL_2(k)$, $SL_2(k)\times\kx$ or
$PGL_2(k)\times\kx$. In this section we recollect some know facts about the cohomology of these finite groups.
Most of this can be found in \cite{Adem-Milgram} and \cite{Fiedor}. Recall that we are always assuming that
$p$ is an odd prime different from $\ell$.

If $q\not\equiv\pm1\;(p)$, then $p$ does not divide the order of these groups and cohomology is trivial in all cases.

Let $D=\kx\times\kx\iso(\Z/(q-1)\Z)^2$ be the subgroup of diagonal matrices in $GL_2(k)$. If
$q\con1\;(p)$, then $H^*(BGL_2(k);\Fp)$ is a ring of invariants, namely
$$H^*(BGL_2(k);\Fp)\iso H^*(BD;\Fp)^{C_2}$$ where $C_2$ is a cyclic group of order two acting on $D$ by permuting the
two factors. Also, if $r$ is such that $q-1=sp^r$ with $s$ prime to $p$, then $H^*(BD;\Fp)\iso H^*(B\Z/p^r\Z\times
B\Z/p^r\Z;\Fp)$ and $$H^*(BGL_2(k);\Fp)\iso\Fp[x_2,x_4]\otimes E(y_1,y_3)$$(the subscripts denote the degrees of the
generators) with secondary Bocksteins of height $r$ relating the exterior generators to the polynomial generators.

If $q\con-1\;(p)$, then $H^*(BD;\Fp)$ is trivial but $H^*(BGL_2(k);\Fp)$ is also a ring of invariants. If $q+1=sp^r$
with $s$ prime to $p$, then $$H^*(BGL_2(k);\Fp)\iso H^*(B\Z/p^r\Z;\Fp)^{C_2}\iso\Fp[x_4]\otimes E(y_3)$$ where $C_2$
acts on $\Z/p^r\Z$ by multiplication by $-1$.

The case of $SL_2(k)=Sp_2(k)$ is similar. If $q\equiv-1\;(p)$ then
$$H^*(B(SL_2(k)\times\kx);\Fp)\iso H^*(BSL_2(k);\Fp)\iso H^*(BGL_2(k);\Fp).$$
If $q\equiv1\;(p)$ then $$H^*(BSL_2(k);\Fp)\iso H^*(B(D\cap SL_2(k));\Fp)^{C_2}\iso\Fp[x_4]\otimes E(y_3)$$
where $C_2$ acts on $D\cap SL_2(k)\iso\Z/(q-1)\Z$ as multiplication by $-1$.

Let us consider now the case of $PGL_2(k)$. If $q\con-1\;(p)$ then $H^*(B\kx;\Fp)$ is trivial and the extension
$\kx\rightarrowtail GL_2(k)\twoheadrightarrow PGL_2(k)$ gives an isomorphism $$H^*(BPGL_2(k);\Fp)\iso
H^*(BGL_2(k);\Fp).$$ On the other side, if $q\con1\;(p)$ and $r$ is such that $q-1=sp^r$ with $s$ prime to $p$, then the
Sylow $p$-subgroup of $PGL_2(k)$ is cyclic $$Syl=\left\{\left[\psMatrix
x001\right]\;|\;x\in\kx,\,x^{p^r}=1\right\}\iso\Z/p^r\Z$$ and it is easy to compute the
stable elements in $H^*(BSyl;\Fp)$. It turns out that the only automorphism of $Syl$ that appears is the involution
$x\mapsto x^{-1}$ and so $$H^*(BPGL_2(k);\Fp)\iso H^*(BSyl;\Fp)^{C_2}\iso\Fp[x_4]\otimes E(y_3).$$

If we apply all this to the parabolic subgroups $H_1$, $H_2$ in $G$, we obtain the following.

\begin{Prop}\label{H*H}
Let $H$ be one of the L\'{e}vi subgroups of $G$ and let $\omega\in W$ be the corresponding generating reflection. Then,
$H^*(BH;\Fp)$ is trivial unless $q\con\pm1\;(p)$. If $q\con1\;(p)$ then $H^*(BH;\Fp)\iso
H^*(BT;\Fp)^\omega$. If $q\con-1\;(p)$ then $H^*(BT;\Fp)$ is trivial and $H^*(BH;\Fp)\iso\Fp[x_4]\otimes E(y_3)$ with a
Bockstein relating $y_3$ to $x_4$.
\end{Prop}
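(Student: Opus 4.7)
The plan is to invoke the structure theorems \ref{parabolicmonic} and \ref{parabolicnotmonic} to replace $H$ by one of the three models $\overline{H}\in\{GL_2(k),\,SL_2(k)\times\kx,\,PGL_2(k)\times\kx\}$, via an isomorphism $\gamma$ that intertwines $\eta$ with the standard torus embedding $\overline\eta$; by part (3) of either proposition the change-of-basis matrix $M$ conjugates $\omega$ into the explicit standard reflection $\overline\omega$. With this reduction the proof becomes a trichotomy according to the residue of $q$ modulo $p$, and in each case the cohomology of $\overline{H}$ has already been recalled in this section.

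First I would dispose of the case $q\not\con\pm1\;(p)$: the orders of $T$, $GL_2(k)$, $SL_2(k)\times\kx$ and $PGL_2(k)\times\kx$ all have trivial $p$-part, so the mod $p$ cohomology is trivial. For $q\con-1\;(p)$ one has $p\nmid(q-1)$, hence $H^*(BT;\Fp)=\Fp$ and the $\kx$-factor in the product models is mod $p$ acyclic; together with the recollections above this gives $H^*(B\overline{H};\Fp)\iso\Fp[x_4]\otimes E(y_3)$ with Bockstein relating $y_3$ to $x_4$ in each of the three cases.

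The substance is the case $q\con1\;(p)$. Here part (3) of the cited propositions, together with naturality of invariants, yields $H^*(BT;\Fp)^\omega\iso H^*(BT;\Fp)^{\overline\omega}$, so it suffices to compute $H^*(BT;\Fp)^{\overline\omega}$ in each model and to compare it with $H^*(B\overline{H};\Fp)$. In the $GL_2$ model $\overline\omega$ swaps the two $\kx$-factors of $T$ and the recollection $H^*(BGL_2(k);\Fp)\iso H^*(BD;\Fp)^{C_2}$ is already the desired identification. In the other two models $\overline\omega(e,v)=(-e,v)$; writing $H^*(BT;\Fp)\iso\Fp[x_2,x_2']\otimes E(y_1,y_1')$ with $\overline\omega$ negating the unprimed generators, a direct invariant-theoretic computation produces the polynomial--exterior algebra on $x_2',\,x_2^2,\,y_1',\,x_2y_1$ in degrees $2,4,1,3$, matching $H^*(B(SL_2(k)\times\kx);\Fp)$ and $H^*(B(PGL_2(k)\times\kx);\Fp)$ obtained from the recollections above via the K\"unneth formula. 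The main obstacle will be verifying that the secondary Bocksteins of height $r$ (with $q-1=sp^r$) are preserved under the identification with $H^*(BT;\Fp)^\omega$; this follows by naturality of the Bockstein applied to the commutative square in part (1) of \ref{parabolicmonic} and \ref{parabolicnotmonic}.
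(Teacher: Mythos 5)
Your proposal is correct and follows essentially the same route as the paper, whose proof is just the one-line observation that the statement follows from Propositions \ref{parabolicmonic} and \ref{parabolicnotmonic} together with the cohomology recollections for $GL_2(k)$, $SL_2(k)\times\kx$ and $PGL_2(k)\times\kx$ earlier in that section. You simply make explicit the case-by-case matching of $H^*(BT;\Fp)^{\overline\omega}$ with $H^*(B\overline{H};\Fp)$ that the paper leaves to the reader.
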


\begin{proof}
This follows from \ref{parabolicmonic}, \ref{parabolicnotmonic} and the discussion above.
\end{proof}

In this paper we are assuming that $p$ is an odd prime. It happens that the situation at the prime two is far more complex, even
for the L\'{e}vi factors. For example, if $p=2$ and $q\con3\;(4)$, we have $H^*(BD;\Fp)\iso H^*(B\Z/2\Z\times
B\Z/2\Z;\F_2)\iso\mathbb{F}_2[u_1,v_1]$ and $H^*(BGL_2(k);\mathbb{F}_2)$ is the subalgebra of $H^*(BD;\mathbb{F}_2)$ generated by
the elements
\begin{align*}
x_1&=u_1+v_1\\
x_3&=u_1v_1^2+u_1^2v_1\\
x_4&=u_1^2v_1^2
 \end{align*}
(see \cite{Fiedor}, page 342). Then, $H^*(BGL_2;\mathbb{F}_2)$ is neither a polynomial ring nor a ring of invariants.
Actually, $$H^*(BGL_2(k);\mathbb{F}_2)\iso\mathbb{F}_2[x_4,x_1,x_3]/(x_3^2+x_4x_1^2).$$

\section{The structure of $\Coker(\phi)$ }\label{Coker}

In this section we will continue the study of the invariant theory of a dihedral group of type VI that we introduced in section
\ref{Weylsec}. So, $p$ is an odd prime and $D$ is a dihedral group with a representation in $GL_2(\Fp)$ of type VI, classified by
$\Gamma\in\Fp$, $\Gamma\ne0$. The order of $D$ is $2m$ where $m$ is the order of the roots of $X^2-2(2\Gamma-1)X+1$. Depending on
the values of $a$, $b$, $\Delta$, $\nabla$ (see Proposition \ref{Weyl}), the Weyl group of the Kac-Moody group $G$
acting on $H^1(T;\Fp)$ provides an example of such a representation, but this section is written independently of the
theory of Kac-Moody groups.

We use the following notation. $P=\Fp[v,v']$ is a polynomial algebra on two generators of degree two. $S=\Fp[v,v']\otimes
E(u,u')$ is the tensor product of $P$ and an exterior algebra on two generators of degree one. We consider $P$ as a subalgebra of
$S$. From a topological point of view, we can think of $P$ and $S$ as $H^*(BS^1\times BS^1;\Fp)$ and $H^*(B\Z/p\times
B\Z/p;\Fp)$, respectively.

Let us consider the following (graded) derivations acting on $S$. $d$ has degree $-1$ and is given by
$$d(u)=d(u')=0,\;d(v)=u,\;d(v')=u'.$$ $\delta$ has degree $+1$ and is given by
$$\delta(v)=\delta(v')=0,\;\delta(u)=v,\;\delta(u')=v'.$$  Under the identification
$S\iso H^*(B\Z/p\times B\Z/p;\Fp)$, $\delta$ corresponds to the Bockstein homomorphism. It is easy to see that if $x\in P$ is a
polynomial of degree $2n$, then $\delta d(x)=nx$.

$D$ acts on the algebras $P$ and $S$ and it is clear that the derivations $d$ and $\delta$ commute with this action. From
\ref{typeVI} we know that the ring of invariants of $P$ under $D$ is a polynomial algebra $P^D=\Fp[x_4,x_{2m}]$ and we have
explicit descriptions of the generators $x_4, x_{2m}$. If we consider the action of $D$ on $S$, theorem 9.3.2 in \cite{LarryBook}
says that the ring of invariants is
$$S^D=\Fp[x_4,x_{2m}]\otimes E(dx_4,dx_{2m}).$$ $D$ is generated by two elements $\omega_1,\,\omega_2$ of order two and we
can also consider the rings of invariants of each generating reflection $S^{\omega_1}$, $S^{\omega_2}$.

Let $\phi$ be the $P^D$ linear map $$S^{\omega_1}\oplus S^{\omega_2}\to S$$ defined by $\phi(t,h)=t+h$. We want to investigate
the structure of $\mathcal{S}=\Coker(\phi)$ as a $P^D$-module.

Let us introduce now two \emph{relative invariants} $\alpha$, $J$ of $D$ in $S$. $$\alpha=uu',\quad
J=\left|\begin{matrix}\frac{\partial x_4}{\partial v}&\frac{\partial x_4}{\partial v'}
\\\frac{\partial x_{2m}}{\partial v}&\frac{\partial x_{2m}}{\partial v'}\end{matrix}\right|.$$Of course, $J$ is the
Jacobian of the algebraically independent polynomials $x_4$ and $x_{2m}$ and so it does not vanish. It is a polynomial
of degree $2m$ in $P$. One sees immediately that both $\alpha$ and $J$ are relative invariants of $D$ with respect to
the determinant: for any $g\in D$ we have $g\alpha=\det(g)\,\alpha$ and $gJ=\det(g)\,J$.

Since the derivations $d$ and $\delta$ commute with the action of $D$ on $S$, we can define two new relative invariants as
follows. $$\alpha'=\delta(\alpha)=vu'-v'u,\quad J'=\frac1m\,d(J).$$ Notice that $\delta(J')=J$.

\begin{Prop}\label{independent}
 $[\alpha]$, $[\alpha']$, $[J]$, $[J']$ are $P^D$-linearly independent elements in $\mathcal{S}$.
\end{Prop}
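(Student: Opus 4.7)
The plan is to apply the relative-invariant (``sign'') projection
\[
\pi_-(x)=\frac{1}{|D|}\sum_{g\in D}\det(g)\,g(x),
\]
which makes sense because in type VI the order $|D|=2m$ is prime to $p$ (the integer $m$ divides $p^2-1$), and which is $P^D$-linear since $g(f)=f$ for $f\in P^D$. I verify two properties. First, $\pi_-$ vanishes on $S^{\omega_i}$ for each $i=1,2$: partitioning $D$ into left cosets of $\langle\omega_i\rangle$, a pair $g,\,g\omega_i$ contributes $\det(g)g(x)+\det(g\omega_i)g(\omega_i x)=\det(g)g(x)-\det(g)g(x)=0$ whenever $\omega_i x=x$. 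Hence $\pi_-$ kills $\mathrm{Im}(\phi)$.

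Second, each of $\alpha,\alpha',J,J'$ is a relative invariant for the sign character, i.e.\ $g(\cdot)=\det(g)\cdot$ for all $g\in D$: for $\alpha=uu'$ this follows because $D$ acts on $(u,u')$ by the same matrix as on $(v,v')$, a fact forced by the $D$-equivariance of $\delta$; for $J$ it is the classical identity $g(J)=\det(g)\,J$ obtained from the chain rule applied to the $D$-invariants $x_4,x_{2m}$; and for $\alpha'=\delta(\alpha)$ and $J'=d(J)/m$ it then follows because $\delta$ and $d$ commute with the $D$-action. Consequently $\pi_-$ is the identity on each of $\alpha,\alpha',J,J'$ and, by $P^D$-linearity, on any $P^D$-combination of them. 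Applying $\pi_-$ to a hypothetical dependence $A\alpha+A'\alpha'+BJ+B'J'\in\mathrm{Im}(\phi)$ with $A,A',B,B'\in P^D$ therefore reduces the problem to showing that $A\alpha+A'\alpha'+BJ+B'J'=0$ in $S$ forces $A=A'=B=B'=0$.

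I next split by exterior degree: $\omega_1,\omega_2$ preserve the exterior degree of $S=P\otimes E(u,u')$ and the coefficients lie in $P^D\subset P$ (exterior degree zero), so the four terms sit in exterior degrees $2,1,0,1$ respectively, and the equation decouples into
\[
BJ=0,\qquad A\alpha=0,\qquad A'\alpha'+B'J'=0.
\]
The first yields $B=0$ because $P$ is an integral domain and $J\neq 0$, and the second yields $A=0$ after identifying $P\,uu'\cong P$.

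The only non-routine case is the third equation, in $P\langle u,u'\rangle\cong P^{\oplus 2}$. Writing $\alpha'=-v'u+vu'$ and $J'=m^{-1}(\partial J/\partial v)\,u+m^{-1}(\partial J/\partial v')\,u'$ as $P$-columns in the basis $(u,u')$, their $P$-linear independence is controlled by
\[
\det\begin{pmatrix}-v' & m^{-1}\partial J/\partial v\\ v & m^{-1}\partial J/\partial v'\end{pmatrix}=-m^{-1}\bigl(v\,\partial J/\partial v+v'\,\partial J/\partial v'\bigr).
\]
Since $J$ is homogeneous in $v,v'$ of polynomial degree $m$, Euler's identity collapses this determinant to $-J$, which is nonzero in the domain $P$. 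Hence $\alpha'$ and $J'$ are $P$-linearly independent, a fortiori $P^D$-linearly independent, and so $A'=B'=0$. The main obstacle in carrying out this plan is purely bookkeeping: verifying the four relative-invariance statements (especially for $\alpha$, where one must invoke the $D$-equivariance of $\delta$ to transfer the action on $(v,v')$ to $(u,u')$) and checking that $|D|$ is invertible in $\Fp$ so that the sign projection is defined. Once those are in hand, the splitting by exterior degree and the Euler identity make the remainder essentially automatic.
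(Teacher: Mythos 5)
Your proof is correct. The first half --- passing from a relation in $\mathcal{S}$ to a relation in $S$ --- is in substance the same device the paper uses: both exploit that $\alpha,\alpha',J,J'$ are relative invariants for the determinant character and that $|D|=2m$ is invertible mod $p$. The paper telescopes the sum $p_2-(\omega_2\omega_1)^m p_2$ by hand to arrive at $-2m(f\alpha'+gJ)=0$, while you package the same cancellation as the sign-isotypic projector $\pi_-$, which is cleaner and treats all four terms simultaneously. The second half genuinely diverges. The paper first proves independence of the pair $[\alpha'],[J]$ alone (by the exterior-degree observation that $gJ\in P$ forces $f\alpha'\in P$, impossible unless $f=g=0$) and then reduces the general four-term relation to this two-term case by applying the derivation $\delta$, using $\delta(\alpha)=\alpha'$, $\delta(J')=J$ and the fact that $\delta$ preserves $S^{\omega_1}+S^{\omega_2}$. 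You instead decompose $S=P\otimes E(u,u')$ by exterior degree, which splits the relation into three independent equations, and settle the only nontrivial one, $A'\alpha'+B'J'=0$ in exterior degree one, by computing the $2\times 2$ coefficient determinant and collapsing it to $-J\ne 0$ via Euler's identity (legitimate here because $J$ is homogeneous of polynomial degree $m$ and $m$ is prime to $p$ in type VI). Your route avoids the $\delta$-reduction entirely and makes the role of the Jacobian explicit; the paper's route avoids the Euler computation. Both arguments are complete, and your justification of the relative invariance of $\alpha$ via the equivariance of $\delta$, and of the invertibility of $|D|$ in $\Fp$, covers the points that need checking.
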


\begin{proof}
 Let $f[\alpha']+g[J]=0$ in $\mathcal{S}$ with $f,g\in P^D$. This means that there are $p_1\in S^{\omega_1}$, $p_2\in
S^{\omega_2}$ such that $$f\alpha'+gJ=p_1+p_2.$$applying $\omega_1$ and subtracting we get
$$2f\alpha'+2gJ=p_2-\omega_1p_2.$$ Applying $\omega_2$ we get $$-2f\alpha'-2gJ=p_2-\omega_2\omega_1p_2.$$ Notice that
$\alpha'$ and $J$ are left invariant by $\omega_2\omega_1$. Hence, applying $\omega_2\omega_1$ and adding, we get
$$-4f\alpha'-4gJ=p_2-(\omega_2\omega_1)^2p_2.$$ Inductively, we get $$-2mf\alpha'-2mgJ=p_2-(\omega_2\omega_1)^mp_2=0.$$
Since $m$ is prime to $p$, this yields $f\alpha'+gJ=0$ in $S$. But $gJ\in P$ and so $f\alpha'\in P$ and this can only
happen if $f,g=0$.

We have proven that $[\alpha']$ and $[J]$ are $P^D$-linearly independent in $\mathcal{S}$. Consider now a vanishing linear
combination $f[\alpha]+g[\alpha']+h[J]+l[J']=0$. This means that there are $p_1\in S^{\omega_1}$, $p_2\in S^{\omega_2}$ such that
$$f\alpha+g\alpha'+hJ+lJ'=p_1+p_2.$$ If we apply the derivation $\delta$ we get
$$f\alpha'+hJ=\delta(p_1)+\delta(p_2)=0$$ and the argument above yields $f=h=0$. Then, $g\alpha'+lJ=p_1+p_2$ and again
this implies $g=l=0$.
\end{proof}

The following consequence of the proposition above is a key step in our computation of the cohomology of the Kac-Moody
group $G$.

\begin{Thm}\label{free}
$\mathcal{S}$ is a free $P^D$-module with basis $\{[\alpha],[\alpha'],[J],[J']\}$.
\end{Thm}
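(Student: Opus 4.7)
The plan is to identify $\mathcal{S}$ with the module of sign-anti-invariants in $S$ and decompose the latter by exterior degree. Linear independence of $\{[\alpha],[\alpha'],[J],[J']\}$ is already granted by Proposition~\ref{independent}, so only generation remains to be checked.

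Since the representation is of type~VI, the order $|D|=2m$ of $D$ is prime to $p$. Let $\epsilon\colon D\to\{\pm1\}$ be the determinant character and set $A=\{x\in S:gx=\epsilon(g)x\text{ for all }g\in D\}$. The anti-symmetrization $\pi_A(x)=\frac{1}{|D|}\sum_{g\in D}\epsilon(g)\,g^{-1}x$ is a $P^D$-linear retraction of $S$ onto $A$. A short induction on the length of $g$, based on the identity $\epsilon(g\omega_j)(g\omega_j)^{-1}-1=(\epsilon(g)g^{-1}-1)-\epsilon(g)(1+\omega_j)g^{-1}$, shows that $\epsilon(g)g^{-1}-1$ lies in the right ideal $(1+\omega_1)\Fp[D]+(1+\omega_2)\Fp[D]$ for every $g\in D$. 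Applied to $x\in S$, this puts $\pi_A(x)-x$ inside $(1+\omega_1)S+(1+\omega_2)S$; and since $p$ is odd, $S^{\omega_i}=(1+\omega_i)S$, so this is exactly the image of $\phi$. Hence $\pi_A$ descends to an isomorphism $\mathcal{S}\iso A$, and it suffices to show that $A$ is $P^D$-free on $\{\alpha,\alpha',J,J'\}$.

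The direct-sum decomposition $S=P\oplus(Pu\oplus Pu')\oplus P\alpha$ by exterior degree is $D$-stable, so $A=A_0\oplus A_1\oplus A_2$. Since $\omega_i(\alpha)=\det(\omega_i)\,\alpha=-\alpha$, one sees at once that $A_2=P^D\cdot\alpha$. For $A_0$, the classical result on relative invariants of a reflection group (valid here since $p\nmid|D|$) identifies it with the free $P^D$-module $P^D\cdot J$ generated by the Jacobian. For $A_1$, use the $D$-equivariant derivation $\delta\colon S_1\to P$, $\delta(fu+gu')=fv+gv'$. Coprimality of $v,v'$ in the UFD $P$ forces $\ker(\delta|_{S_1})=P\cdot\alpha'$; intersecting with $A_1$ and using $\omega_i(\alpha')=-\alpha'$, the anti-invariance condition on $h\alpha'$ reduces to $h\in P^D$, giving $\ker(\delta|_{A_1})=P^D\cdot\alpha'$. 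On the other hand $\delta(J')=J$ (from $\delta d=n\cdot\mathrm{id}$ on the degree-$2n$ part of $P$, with $\deg J=2m$), so the submodule $P^D\cdot J'\subseteq A_1$ maps onto $A_0=P^D\cdot J$. This produces a short exact sequence $0\to P^D\alpha'\to A_1\xrightarrow{\delta}P^D\cdot J\to 0$ that splits $P^D$-linearly via $fJ\mapsto fJ'$, yielding $A_1=P^D\alpha'\oplus P^D J'$. Altogether $A=P^D\alpha\oplus P^D\alpha'\oplus P^D J\oplus P^D J'$, as required.

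The delicate step is the averaging reduction $\mathcal{S}\iso A$, which crucially uses that $p\nmid|D|$ (type~VI); once that is in place the remainder is a clean exterior-degree-by-exterior-degree computation, with the classical identification $A_0=P^D\cdot J$ the main nontrivial input.
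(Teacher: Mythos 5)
Your proof is correct, but it takes a genuinely different route from the one in the paper. The paper's argument is a counting argument: Proposition \ref{independent} exhibits the free $P^D$-module on $\{[\alpha],[\alpha'],[J],[J']\}$ as a submodule of $\mathcal{S}$, the kernel of $\phi$ is identified with $S^D$, and one checks that the Poincar\'e series of $\mathcal{S}=\Coker(\phi)$ (computed from those of $S$, $S^{\omega_1}\oplus S^{\omega_2}$ and $S^D$) agrees with that of a free module on generators of degrees $2$, $3$, $2m-1$, $2m$, whence the submodule is everything. You instead exploit that $|D|=2m$ is prime to $p$ (true in type VI) to split $S$ as the image of $\phi$ plus the module $A$ of $\det$-relative invariants via the twisted averaging idempotent --- your verification that $\epsilon(g)g^{-1}-1$ lies in the right ideal $(1+\omega_1)\Fp[D]+(1+\omega_2)\Fp[D]$ and that $(1+\omega_i)S=S^{\omega_i}$ is correct --- and then you compute $A$ exterior-degree by exterior-degree: $A_2=P^D\alpha$ trivially, $A_0=P^D\cdot J$ by the classical theorem on relative invariants of a non-modular reflection group, and $A_1=P^D\alpha'\oplus P^DJ'$ from the split exact sequence induced by $\delta$ (the splitting $fJ\mapsto fJ'$ is well defined because $P$ is a domain and $J\neq0$). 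Your route is longer but more structural: it does not actually need Proposition \ref{independent} at all, since freeness falls out of the direct-sum decomposition; it identifies $\mathcal{S}$ conceptually with the relative invariants and so explains why $\alpha$, $\alpha'=\delta\alpha$, $J$, $J'=\frac1m dJ$ are the natural generators; and it replaces the paper's unwritten power-series computation by the standard input $P^{D,\det}=P^D\cdot J$. Both arguments use the coprimality of $p$ and $|D|$ in an essential way.
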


\begin{proof}
 We know that the free $P^D$-module with basis $\{[\alpha],[\alpha'],[J],[J']\}$ is a submodule of $\mathcal{S}$. It is
enough to prove that both graded vector spaces have the same Poincar\'{e} series. The kernel of $\phi$ is $S^D$. It is very easy
to write down the Poincar\'{e} series of $S$, $S^{\omega_1}\oplus S^{\omega_2}$ and $S^D$. Then, a straightforward computation
with power series proves that the Poincar\'{e} series of $\mathcal{S}$ is as expected.
\end{proof}

The computation of the multiplicative structure of $H^*(BG;\Fp)$ in the next section requires that we work out some
relations that hold in $S$. Let $y_3=dx_4$, $y_{2m-1}=dx_{2m}$. Let $\theta,\gamma$ be as in the proof of \ref{typeVI}.

\begin{Thm}\label{Jacobians}
 The following identities hold in $S$:
\begin{enumerate}
 \item $y_3J'=m\lambda x_{2m}\alpha$, $y_{2m-1}J'=2m^2\mu x_4^{m-1}\alpha$.
\item $y_3J=2x_4J'-m\lambda x_{2m}\alpha'$, $y_{2m-1}J=mx_{2m}J'-2m^2\mu x_4^{m-1}\alpha'$.
\end{enumerate}
where the coefficients $\lambda$ and $\mu$ are as follows:
\begin{enumerate}
 \item[(a)] $\lambda=1$ if $p\equiv1\,(m)$; $\lambda=\gamma^2-4$ if $p\equiv-1\,(m)$.
\item[(b)] $\mu=1$ if $p\equiv1\,(m)$; $\mu=\gamma^2-4$ if $p\equiv-1\,(m)$ and $m\ne2$ is even; $\mu=-1$ if
$p\equiv-1\,(m)$ and $m$ is odd.
\end{enumerate}
\end{Thm}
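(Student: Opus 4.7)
The plan is to interpret the derivation $d$ as a de Rham--type exterior derivative. Since $d$ is a graded derivation with $d(v)=u$, $d(v')=u'$, $d(u)=d(u')=0$, for every $f\in P$ one has $d(f)=\partial_v f\cdot u+\partial_{v'}f\cdot u'$. Multiplying two such expressions and using $u^2=u'^2=0$, $uu'=-u'u$ gives
\[
d(s)\cdot d(t)\;=\;\bigl(\partial_v s\cdot\partial_{v'}t-\partial_{v'}s\cdot\partial_v t\bigr)\,\alpha
\]
for any $s,t\in P$. Applied to the pairs $(x_4,J)$ and $(x_{2m},J)$ this yields
\[
y_3\,J'=\tfrac{1}{m}\,\mathrm{Jac}(x_4,J)\,\alpha,\qquad y_{2m-1}\,J'=\tfrac{1}{m}\,\mathrm{Jac}(x_{2m},J)\,\alpha,
\]
so that identities (1) become the purely polynomial assertions
\[
\mathrm{Jac}(x_4,J)=m^2\lambda\,x_{2m},\qquad \mathrm{Jac}(x_{2m},J)=2m^3\mu\,x_4^{m-1}.
\]

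I would then verify these Jacobian identities case by case, starting from the explicit generators of Proposition~\ref{typeVI}. The case $p\equiv1\,(m)$ is immediate: with $x_4=vv'$ and $x_{2m}=v^m+v'^m$ one finds $J=m(v'^m-v^m)$ and direct computation gives $\mathrm{Jac}(x_4,J)=m^2 x_{2m}$, $\mathrm{Jac}(x_{2m},J)=2m^3 (vv')^{m-1}=2m^3 x_4^{m-1}$, so $\lambda=\mu=1$. In the case $p\equiv -1\,(m)$ I would extend scalars to $\F_{p^2}$ and pass to the basis $A=\theta v-v'$, $B=\theta v'-v$, in which the dihedral action is monomial; there $AB=-\theta\,x_4$ and $A^m+B^m$ equals $x_{2m}$ for $m$ even or $(\theta-\theta^{-1})\,x_{2m}$ for $m$ odd, and the standard computation yields $\mathrm{Jac}_{A,B}(AB,A^m+B^m)=m(B^m-A^m)$ and $\mathrm{Jac}_{A,B}(A^m+B^m,m(B^m-A^m))=2m^3(AB)^{m-1}$. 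The Jacobians in the two bases are related by $\mathrm{Jac}_{v,v'}=\det(N)^{-1}\mathrm{Jac}_{A,B}$ with $N=\psMatrix\theta{-1}{-1}\theta$ and $\det(N)=\theta^2-1=\theta(\theta-\theta^{-1})$; combining this with the scaling factors needed to push $x_4$ and $x_{2m}$ into $\F_p[v,v']$ recovers exactly $\lambda=\gamma^2-4=(\theta-\theta^{-1})^2$ and the claimed values of $\mu$ in each parity.

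Identities (2) then follow from (1) by applying the graded derivation $\delta$. The key facts are $\delta(\alpha)=\alpha'$, $\delta(x_4)=\delta(x_{2m})=0$, and the identity $\delta d(f)=n\cdot f$ for $f\in P$ of degree $2n$, which yields $\delta(y_3)=2x_4$, $\delta(y_{2m-1})=m\,x_{2m}$ and $\delta(J')=\delta d(J)/m=J$. Applying the graded Leibniz rule to $y_3 J'=m\lambda\,x_{2m}\,\alpha$ gives
\[
2x_4\,J'-y_3\,J \;=\; m\lambda\,x_{2m}\,\alpha',
\]
which is the first identity in (2) after rearrangement; an identical computation starting from $y_{2m-1}J'=2m^2\mu\,x_4^{m-1}\alpha$ supplies the second.

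The main obstacle is the bookkeeping in the $p\equiv -1\,(m)$ cases: the scalars $\lambda$ and (more delicately) $\mu$ must be extracted correctly from the composition of the $\F_{p^2}$ change of basis with the normalizations forcing $x_4,x_{2m}\in\F_p[v,v']$, and the parity of $m$ shifts which of those normalizations absorbs an extra factor of $\theta-\theta^{-1}$. Once the scalars are tracked through, the three values of $\mu$ are forced, and the deduction of identities (2) from (1) via $\delta$ is purely formal.
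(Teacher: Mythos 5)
Your proposal is correct and follows essentially the same route as the paper: reduce identities (1) to the double Jacobian computations $\mathcal{J}(x_4,\mathcal{J}(x_4,x_{2m}))=m^2\lambda x_{2m}$ and $\mathcal{J}(x_{2m},\mathcal{J}(x_4,x_{2m}))=2m^3\mu x_4^{m-1}$, verify these directly for $p\equiv1\,(m)$ and via the $\F_{p^2}$ change of basis (tracking the determinant $\theta^2-1$ and the normalizations from Proposition \ref{typeVI}) for $p\equiv-1\,(m)$, then obtain (2) by applying $\delta$ with the graded Leibniz rule. The bookkeeping you flag is exactly what the paper's proof does, and your values of $\lambda$ and $\mu$ agree.
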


\begin{proof}
 The identities in (2) follow from (1) applying the derivation $\delta$. To prove (1) consider
$$y_3J'=\frac1m\,dx_4\,dJ=\frac1m\mathcal{J}(x_4,\mathcal{J}(x_4,x_{2m}))\alpha$$
$$y_{2m-1}J'=\frac1m\,dx_{2m}\,dJ=\frac1m\mathcal{J}(x_{2m},\mathcal{J}(x_4,x_{2m}))\alpha$$
where $\mathcal{J}(f,g)$ denotes the determinant of the Jacobian matrix of the polynomials $f,g$. To compute these
double Jacobians, notice that the Jacobian is a relative invariant of the determinant and so if we change bases then
the Jacobian is multiplied by the determinant of the change of basis matrix.

We will use the descriptions of $x_4$ and $x_{2m}$ as given in \ref{typeVI}.

If $p\equiv1\,(m)$, then $x_4=vv'$, $x_{2m}=v^m+(v')^m$. Then, a trivial direct calculation gives
$$\mathcal{J}(x_4,\mathcal{J}(x_4,x_{2m}))=m^2x_{2m}$$$$
\mathcal{J}(x_{2m},\mathcal{J}(x_4,x_{2m}))=2m^3x_4^{m-1}.$$

If $p\equiv-1\,(m)$, we extend scalars as to have the $m$-th root of unity $\theta$ in our field of coefficients. Then $x_4$ and
$x_{2m}$ have simple descriptions in a new base $w,w'$ which is related to the basis $v,v'$ through a matrix $P$ of determinant
$\theta^2-1$ (see the proof of \ref{typeVI}).

If $p\equiv-1\,(m)$ and $m$ is even, $m\ne2$, then $x_4=-\theta^{-1}ww'$ and $x_{2m}=w^m+(w')^m$. In this new basis, the
computation of the double Jacobians is as easy as in the case $p\equiv1\,(m)$. We get
$$\mathcal{J}(x_4,\mathcal{J}(x_4,x_{2m}))=\theta^{-2}(\theta^2-1)^2m^2x_{2m}=m^2(\gamma^2-4)x_{2m}$$$$
\mathcal{J}(x_{2m},\mathcal{J}(x_4,x_{2m}))=-\theta^{-1}(\theta^2-1)^22m^3(ww')^{m-1}=2m^3(\gamma^2-4)x_4^{m-1}.$$

If $p\equiv-1\,(m)$ and $m$ is odd, then $x_4=-\theta^{-1}ww'$ and $x_{2m}=(w^m+(w')^m)/(\theta-\theta^{-1})$. Then
$$\mathcal{J}(x_4,\mathcal{J}(x_4,x_{2m}))=\theta^{-2}(\theta-\theta^{-1})^{-1}
(\theta^2-1)^2m^2(w^m+(w')^m)=m^2(\gamma^2-4)x_{2m}$$$$
\mathcal{J}(x_{2m},\mathcal{J}(x_4,x_{2m}))=-\theta^{-1}(\theta-\theta^{-1})^{-2}(\theta^2-1)^22m^3(ww')^{m-1} =-2m^3x_4^ { m-1 }
.$$
\end{proof}

\section{The cohomology of $BG$}\label{cohomologyG}

The preceding sections have provided all ingredients that we need to compute the cohomology algebra of the Kac-Moody group $G$
with coefficients in the field $\Fp$. By \ref{Mitchell} we know that $BG$ is homotopy equivalent to
the push out of $BP_1\leftarrow BP_{\varnothing}\rightarrow BP_2$. By \ref{acyclic} we know that $H^*(BG;\Fp)\iso
H^*(B(H_1\ast_TH_2);\Fp)$. We have computed $H_1$ and $H_2$ in section \ref{parabolicsec} and we have computed
$H^*(BH_i;\Fp)$ in \ref{H*H}. Then, the obvious tool to use to compute $H^*(B(H_1\ast_TH_2);\Fp)$ is the Mayer-Vietoris
exact sequence. The following lemma on the behavior of the connecting homomorphism of the Mayer-Vietoris exact sequence
will be useful.

\begin{Lem}\label{Dold}
 Let
$$\xymatrix{
A\ar[r]^{i_1}\ar[d]^{i_2}\ar[dr]^l&B_1\ar[d]^{j_1}\\
B_2\ar[r]^{j_2}&X
}
$$
be a push out diagram and let $\Delta\colon H^*(A)\to H^*(X)$ be the connecting homomorphism of the corresponding
Mayer-Vietoris exact sequence (coefficient ring omitted). Then, for any $x\in H^*(X)$, $t\in H^*(A)$, we have
$\Delta(l^*(x)\,t)=x\,\Delta(t)$.
\end{Lem}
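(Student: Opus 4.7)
The statement is the familiar fact that the connecting homomorphism of the Mayer--Vietoris sequence is $H^*(X)$-linear, when $H^*(A)$ is viewed as an $H^*(X)$-module via $l^*$. My plan is to derive it from a direct cochain level computation, exploiting the standard zig-zag construction of $\Delta$.

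First, I would realise the Mayer--Vietoris long exact sequence as the cohomology long exact sequence of the short exact sequence of cochain complexes
$$0\longrightarrow C^*(X)\xrightarrow{(j_1^*,\,j_2^*)} C^*(B_1)\oplus C^*(B_2)\xrightarrow{i_1^*-i_2^*} C^*(A)\longrightarrow 0.$$
This short exact sequence is available after a suitable choice of cochain model for the push-out (for instance, using open-cover Alexander--Whitney cochains, or replacing the push-out by the double mapping cylinder before taking singular cochains). The connecting homomorphism $\Delta$ is then given by the usual zig-zag: given a cocycle $t\in C^n(A)$, pick a lift $(s_1,s_2)\in C^n(B_1)\oplus C^n(B_2)$ with $i_1^*(s_1)-i_2^*(s_2)=t$; the pair $(ds_1,ds_2)$ then pulls back to $dt=0$ on $A$, hence descends to a unique cocycle $\hat\Delta(t)\in C^{n+1}(X)$ representing the class $\Delta(t)$.

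Second, I pick a cocycle $\tilde x\in C^*(X)$ representing $x$ and set $x_i=j_i^*(\tilde x)\in C^*(B_i)$, so that $i_1^*(x_1)=i_2^*(x_2)=l^*(\tilde x)$. Because the restriction maps are ring homomorphisms at the cochain level, the pair $(x_1\cdot s_1,\,x_2\cdot s_2)$ is a lift of the cocycle $l^*(\tilde x)\cdot t$ under $i_1^*-i_2^*$. Since $\tilde x$, and hence each $x_i$, is a cocycle, the Leibniz rule gives $d(x_i\cdot s_i)=(-1)^{|x|}x_i\cdot ds_i$, and the identity $j_i^*(\tilde x\cdot\hat\Delta(t))=x_i\cdot ds_i$ then exhibits the representative of $\Delta(l^*(x)\cdot t)$ as $(-1)^{|x|}\tilde x\cdot\hat\Delta(t)$. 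This yields the claimed formula, with the possible sign $(-1)^{|x|}$ either absorbed by the sign convention used to define $\Delta$ or harmless in the applications of this paper, where $x$ will typically be of even degree.

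The main technical obstacle is ensuring that the chosen cochain model supports simultaneously the Mayer--Vietoris short exact sequence and a cup product strictly compatible with the restriction maps $j_i^*$, $i_i^*$; this is standard but requires some care (working with open covers or with Alexander--Whitney models). A more conceptual, model-free alternative is to identify $\Delta$ with the composite $H^n(A)\cong\widetilde H^{n+1}(\Sigma A)\to H^{n+1}(X)$ arising from the Puppe sequence of the cofibration $A\to B_1\sqcup B_2\to X$, and then to deduce the $H^*(X)$-linearity of $\Delta$ directly from the naturality of the cup product under pullback along the map $X\to\Sigma A$.
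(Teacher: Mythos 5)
Your proof is correct in substance, but it takes a different route from the paper's. You work at the cochain level: you realise the Mayer--Vietoris sequence via the short exact sequence $0\to C^*(X)\to C^*(B_1)\oplus C^*(B_2)\to C^*(A)\to 0$ (in a suitable model) and trace a multiplicative cocycle through the snake-lemma zig-zag. The paper instead factors $\Delta$ as $r^*(k^*)^{-1}\delta$, where $\delta$ is the connecting homomorphism of the pair $(B_1,A)$ and $k^*$ is the excision isomorphism $H^*(X,B_2)\cong H^*(B_1,A)$, and then simply quotes Dold's module property $\delta(i_1^*(b_1)\,t)=b_1\,\delta(t)$ for the connecting map of a pair. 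The paper's argument is shorter and avoids having to justify a cochain model that is simultaneously exact and strictly multiplicative (the point you correctly flag as the technical cost of your approach); your argument is self-contained and makes the mechanism visible. Two small caveats. First, your computation honestly produces the sign $(-1)^{|x|}$; the paper's citation of Dold quietly has the same issue (Dold's formula is sign-free only when the $H^*(B_1)$-factor sits on the appropriate side), and in the application the module elements $y_3$, $y_{2m-1}$ are of odd degree, so the sign is not always vacuous --- though it only affects the normalisation of the relations in Theorem \ref{Main}, not their structure. Second, your ``model-free alternative'' via the map $X\to\Sigma A$ is sketchier than you suggest: naturality of the cup product along that map controls products of classes pulled back from $\Sigma A$, but the $H^*(X)$-module linearity of $\Delta$ additionally requires the coaction $\Sigma A\to \Sigma A\wedge X_+$ of the cofibre sequence; as written that alternative would need more justification, so your cochain argument should remain the primary one.
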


\begin{proof}
Let us assume that all maps in the diagram are cofibrations. Then, $\Delta$ is defined through this commutative diagram
(cf.\ \cite{Dold}, p.\ 49):
$$\xymatrix{
H^*(A)\ar[r]^{\Delta}\ar[d]^{\delta}&H^*(X)\\
H^*(B_1,A)&\ar[l]_{k^*}^{\cong}H^*(X,B_2)\ar[u]^{r^*}
}
$$
where $r$ and $k$ are the obvious inclusions, $\delta$ is the connecting homomorphism for the couple $(B_1,A)$ and $k$
is an isomorphism because of excision. The way that $\delta$ behaves with respect to cup products (cf.\ \cite{Dold},
p.\ 220) gives $$\delta(i_1^*(b_1)\,t)=b_1\,\delta(t),\quad\text{for }b_1\in H^*(B_1),\;t\in H^*(A).$$ Then, an easy
calculation using $\Delta=r^*(k^*)^{-1}\delta$ gives the formula of the lemma.
\end{proof}

Let us deal first with the easy case in which $q\not\equiv1\;(p)$. If $(A_1,\epsilon_1)$ and $(A_2,\epsilon_2)$ are augmented
graded $\Fp$-algebras, let us denote by $A_1\vee A_2$ the augmented graded algebra defined as the kernel of
$\epsilon_1-\epsilon_2\colon A_1\oplus A_2\to\Fp$.

\begin{Prop}\label{trivial}
 If $q\not\equiv\pm1\;(p)$ then $H^*(BG;\Fp)\iso\Fp$. If $q\equiv-1\;(p)$ then $H^*(BG;\Fp)\iso A\vee A$ with
$A=\Fp[x_4]\otimes E(y_3)$ (subscripts denote degrees).
\end{Prop}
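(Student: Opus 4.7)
The plan is to invoke Theorem \ref{Mitchell} together with Proposition \ref{acyclic} to identify the mod $p$ cohomology of $BG$ with that of the pushout $BH_1 \cup_{BT} BH_2$, and then run the Mayer-Vietoris long exact sequence using the cohomology rings of $T$, $H_1$ and $H_2$ tabulated in Proposition \ref{H*H}.

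When $q \not\equiv \pm 1 \pmod p$, the orders of $T$, $H_1$ and $H_2$ each divide $q(q-1)^2(q+1)$, and since $p$ is odd, different from $\ell$, and coprime to both $q-1$ and $q+1$, it divides none of these orders. All three cohomology rings therefore reduce to $\Fp$ concentrated in degree zero, and the Mayer-Vietoris sequence collapses immediately to $H^*(BG;\Fp) \cong \Fp$.

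When $q \equiv -1 \pmod p$, Proposition \ref{H*H} gives $H^*(BT;\Fp) = \Fp$ (concentrated in degree zero) and $H^*(BH_i;\Fp) \cong A$. The Mayer-Vietoris sequence then breaks up into short exact sequences
\[
0 \to H^n(BG;\Fp) \to H^n(BH_1;\Fp) \oplus H^n(BH_2;\Fp) \to 0, \quad n \geq 1,
\]
while in degree zero $H^0(BG;\Fp) = \Fp$ embeds diagonally into $\Fp \oplus \Fp$. Additively this identifies $H^*(BG;\Fp)$ with $A \vee A$.

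For the ring structure, note that the resulting injection $H^*(BG;\Fp) \hookrightarrow H^*(BH_1;\Fp) \oplus H^*(BH_2;\Fp)$ is induced by the two canonical maps $BH_i \to BG$ and is therefore a homomorphism of graded algebras with respect to componentwise multiplication on the target; positive-degree classes pulled back from the two different summands automatically multiply to zero, which is precisely the relation defining $A \vee A$. I do not foresee a serious obstacle; the one point worth underlining is that the Mayer-Vietoris injection is multiplicative, so that the wedge relation is forced without any further cup-product computation on $BG$.
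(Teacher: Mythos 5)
Your proposal is correct and follows exactly the paper's own route: the paper's proof is a one-line appeal to the Mayer--Vietoris sequence for the pushout $BH_1\leftarrow BT\rightarrow BH_2$ together with Proposition \ref{H*H}, and your write-up simply supplies the routine details (vanishing of $H^*(BT;\Fp)$ in positive degrees, surjectivity of $\phi$ in degree zero, and multiplicativity of the restriction map identifying the image with $\ker(\epsilon_1-\epsilon_2)=A\vee A$). No gaps.
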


\begin{proof}
 This follows immediately from the Mayer-Vietoris exact sequence
$$\xymatrix@C=20pt{\cdots\ar[r]&H^{*-1}(BT)\ar[r]^{\Delta}&H^*(BG)\ar[r]^(.35){\psi}&H^*(BH_1)\oplus
H^*(BH_2)\ar[r]^(.66){\phi}&H^*(BT) \ar[r]&\cdots}$$ and Proposition \ref{H*H}.
\end{proof}

Of course, the relevant case is when $q\equiv1\;(p)$. In this case, the computation of $H^*(BG;\Fp)$ will be made under
the following \textbf{stability hypothesis}:

\medskip
\textbf{(SH)} The prime $p$ is large enough so that $ab(ab-4)$ is prime to $p$.

\medskip
This hypothesis implies that the representation of the Weyl group $W$ of $G$ is of Type VI (cf.\ \ref{Weyl}). We use
the notations of section \ref{Coker}. In particular $S^W=\Fp[x_4,x_{2m}]\otimes E(y_3,y_{2m-1})$ and
$P^W=\Fp[x_4,x_{2m}]$. We are ready for the main result of this paper:

\begin{Thm}\label{Main}
Assume that $q\equiv1\;(p)$ and the stability hypothesis (SH) is satisfied. Then $H^*(BG;\Fp)$ is a $S^W$-module with
generators $1,\alpha_3,\alpha_4,J_{2m},J_{2m+1}$ (subscripts denote degrees) and relations
\begin{enumerate}
\item $y_3\alpha_3=0,\;y_{2m-1}\alpha_3=0$.
\item $2x_4\alpha_3-y_3\alpha_4=0,\;mx_{2m}\alpha_3-y_{2m-1}\alpha_4=0$
\item $y_3J_{2m}-m\lambda x_{2m}\alpha_3=0,\;y_{2m-1}J_{2m}-2m^2\mu x_4^{m-1}\alpha_3=0$
\item $y_3J_{2m+1}+m\lambda x_{2m}\alpha_4-2x_4J_{2m}=0,\;y_{2m-1}J_{2m+1}+2m^2\mu x_4^{m-1}\alpha_4-mx_{2m}J_{2m}=0$
\end{enumerate}
where the parameters $\lambda$ and $\mu$ are as follows:
\begin{enumerate}
\item If $p\equiv1\,(m)$, then $\lambda=\mu=1$ .
\item If $p\equiv-1\,(m)$, then $\lambda=ab(ab-4)$ and $\mu=ab(ab-4)$ if $m\ne2$ is even and $\mu=-1$ if $m$ is odd.
\end{enumerate}
The algebra structure of $H^*(BG;\Fp)$ is determined by the above relations
and the fact that all products between the generators $\alpha_3,\alpha_4,J_{2m},J_{2m+1}$ vanish.
\end{Thm}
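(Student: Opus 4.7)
The plan is to combine Mitchell's pushout description of $BG$ with the acyclicity of the unipotent radicals to reduce to a Mayer--Vietoris calculation, and then to use Lemma \ref{Dold} to transport the $P^W$-module description of $\mathcal{S}=\Coker(\phi)$ from Theorem \ref{free} into explicit multiplicative relations in $H^*(BG;\Fp)$, with the Jacobian identities of Theorem \ref{Jacobians} supplying the crucial $S^W$-action formulas.

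Concretely, the first step is to apply Theorem \ref{Mitchell} and Proposition \ref{acyclic} to write $H^*(BG;\Fp)\iso H^*(B(H_1\ast_T H_2);\Fp)$ and then feed the pushout $BH_1\cup_{BT}BH_2$ into Mayer--Vietoris:
$$
\cdots\to H^{n-1}(BT;\Fp)\xrightarrow{\Delta}H^n(BG;\Fp)\xrightarrow{\psi}S^{\omega_1}\oplus S^{\omega_2}\xrightarrow{\phi}S^n\to\cdots
$$
where $S=H^*(BT;\Fp)=\Fp[v,v']\otimes E(u,u')$ and the identifications $H^*(BH_i;\Fp)=S^{\omega_i}$ come from Proposition \ref{H*H}. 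Under (SH), Proposition \ref{Weyl} places the Weyl action in Type VI, so $S^W=\Fp[x_4,x_{2m}]\otimes E(y_3,y_{2m-1})$. Since $\ker\phi\iso S^W$ (via $s\mapsto(s,-s)$) and $\Coker\phi=\mathcal{S}$, the long exact sequence collapses into a short exact sequence of graded $S^W$-modules
$$
0\to\mathcal{S}[-1]\to H^*(BG;\Fp)\to S^W\to 0,
$$
split by the unit map, and Theorem \ref{free} then endows $H^*(BG;\Fp)$ with the claimed additive structure generated over $S^W$ by $1$, $\alpha_3=\Delta(\alpha)$, $\alpha_4=\Delta(\alpha')$, $J_{2m}=\Delta(J')$ and $J_{2m+1}=\Delta(J)$.

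The multiplicative structure is then read off via Lemma \ref{Dold}: for $x\in S^W\subset H^*(BG;\Fp)$ and $t\in S$ one has $\Delta(x\,t)=x\,\Delta(t)$. Relations (1) follow because $y_3\alpha=(dx_4)(uu')=0$ and $y_{2m-1}\alpha=0$ already hold in $S$, since $dx_4$ is a $P$-combination of $u$ and $u'$ and is therefore annihilated by right multiplication by $uu'$. Relations (2) follow from the direct identity $d(x)\cdot\alpha'=n\,x\,\alpha$ for any $x\in P$ of degree $2n$ (a one-line check using $\alpha'=vu'-v'u$), which specializes to $y_3\alpha'=2x_4\alpha$ and $y_{2m-1}\alpha'=mx_{2m}\alpha$. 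Relations (3) and (4) are the $\Delta$-images of the identities in Theorem \ref{Jacobians}, with the coefficients $\lambda,\mu$ matched via $\gamma=4\Gamma-2=ab-2$ from Proposition \ref{Weyl}, giving $\gamma^2-4=ab(ab-4)$ in the $p\equiv-1\,(m)$ cases. Finally, all products among $\alpha_3,\alpha_4,J_{2m},J_{2m+1}$ vanish because each lies in the image of $\Delta$ and therefore restricts to zero on each $BH_i$; applying Lemma \ref{Dold} with $x=\Delta(s)$ gives $\Delta(s)\Delta(t)=\Delta(l^*(\Delta(s))\cdot t)=0$ by exactness.

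The main obstacle is the careful translation between the abstract $P^W$-module structure of $\mathcal{S}$ and the full $S^W$-module structure that appears in the statement: the odd-degree generators $y_3,y_{2m-1}$ act non-trivially via the Jacobian identities and carry the subtle coefficients $\lambda,\mu$ that require case analysis on $p\bmod m$ and on the parity of $m$. Once these are pinned down, completeness of the relations is a dimension count comparing the Poincar\'e series of the algebra described in the statement with the one produced by the short exact sequence above.
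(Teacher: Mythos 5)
Your proposal is correct and follows essentially the same route as the paper: Mayer--Vietoris for the pushout $BH_1\cup_{BT}BH_2$, the short exact sequence $0\to\mathcal{S}\to H^*(BG;\Fp)\to S^W\to 0$ with a multiplicative section giving the $S^W$-module structure, Lemma \ref{Dold} for the $S^W$-linearity of $\Delta$, freeness over $P^W$ from Theorem \ref{free}, and Theorem \ref{Jacobians} for relations (3)--(4). The only cosmetic differences are that you derive relations (2) from the Euler identity $d(x)\,\alpha'=nx\,\alpha$ where the paper applies the derivation $\delta$ to relations (1), and you deduce the vanishing of products of $\Delta$-images from Lemma \ref{Dold} together with $l^*\Delta=0$, where the paper invokes the factorization of $\Delta$ through a map $BG\to\Sigma BT$.
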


\begin{proof}
As discussed above, we have a Mayer-Vietoris exact sequence (coefficients in $\Fp$ are assumed)
$$\xymatrix@C=20pt{\cdots\ar[r]&H^{*-1}(BT)\ar[r]^{\Delta}&H^*(BG)\ar[r]^(.35){\psi}&H^*(BH_1)\oplus
H^*(BH_2)\ar[r]^(.66){\phi}&H^*(BT) \ar[r]&\cdots}$$
By \ref{H*H} we know that $H^*(BH_i)\iso S^{\omega_i}$, $i=1,2$, where $\omega_1,\omega_2$ are the generators of the
Weyl group $W$. Then, the kernel of $\phi$ is $S^W$ and if we denote $\mathcal{S}=\Coker(\phi)$ as in section
\ref{Coker}, we have a short exact sequence
$$\xymatrix{0\ar[r]&\mathcal{S}\ar[r]^(.35){\overline{\Delta}}&H^*(BG)\ar[r]^(.6){\overline{\psi}}&S^W\ar[r]&0}.$$
Notice that $\overline{\psi}$ is a ring homomorphism and $S^W$ is a free graded algebra. Hence, we can choose a section
$\sigma$ of $\phi$ which is a ring homomorphism. This section turns $H^*(BG)$ into an $S^W$-module. Then, lemma
\ref{Dold} implies that $\Delta$ is $S^W$-linear. We have that the above short exact sequence is an exact sequence of
$P^W$-modules. Both $\mathcal{S}$ and $S^W$ are free $P^W$-modules (Proposition \ref{free}) and so $H^*(BG)$ is a free
$P^W$-module with basis
\begin{gather*}
1,\sigma(y_3),\sigma(y_{2m-1}),\sigma(y_3)\sigma(y_{2m-1}),\\ \alpha_3=\overline{\Delta}[\alpha],
\alpha_4=\overline{\Delta}[\alpha'],
J_{2m+1}=\overline{\Delta}[J],
 J_{2m}=\overline{\Delta}[J'].
\end{gather*}

To complete the proof we should compute all products between these generators. First of all, notice that $\Delta$ is a connecting
homomorphism which arises from a map $BG\to\Sigma BT$ and so all products in the image of $\Delta$ vanish. In $S$ we have
$y_3\alpha=y_{2m-1}\alpha=0$ and this implies the relations (1) in $H^*(B\G)$. Applying $\delta$ we get the relations (2).
Relations (3) and (4) follow from the identities in $S$ proven in Theorem \ref{Jacobians} and the fact that $\gamma^2-4=ab(ab-4)$
in this case. \end{proof}

\section{Comparison between $BG$ and the fixed points by an Adams map}\label{Adams}

There is a well know result by Friedlander (\cite{Friedlander}) that relates the classifying space of a Chevalley group
over a finite field $k$ to the homotopy fixed points of and unstable Adams map from the classifying space of the
corresponding compact connected Lie group, after completion at a prime different to the characteristic of $k$. More
precisely, let $p,\ell$ be different primes, $k$ a finite field of order $q$ and characteristic $\ell$, $K$ a compact
connected Lie group and $K(q)$ the Chevalley group over $k$ of type $K$. Then, there is a homotopy pullback diagram
$$\xymatrix{
BK(q)\pcom\ar[r]\ar[d]&BK\pcom\ar[d]^{\Delta}\\
BK\pcom\ar[r]^(.4){(1,\psi^q)}&BK\pcom\times BK\pcom
}$$
where $\psi^q$ is an unstable Adams map of exponent $q$. This result was generalized in \cite{BrotoMoller} to the case
in which $K$ is a $p$-compact group. Then, $K(q)$ turns out to be a $p$-local finite group. In this final section we
want to investigate if such a pullback diagram could also exist in the context of Kac-Moody groups.

Let $K$ denote the unitary form of a connected, simply connected, not compact Kac-Moody group as in \cite{KacMSRI},
\cite{tesiNitu}, \cite{CarlesNitu} and assume that $K$ is of rank two, uniquely determined by two positive integers $a,b$ with
$ab>4$. In the case of rank two, there is a satisfactory theory of Adams maps $\psi^q\colon BK\to BK$ developed in \cite{Advances} and
so, for a prime $p$ and a prime power $q$ with $p\nmid q$, it makes sense to consider the space $X$ defined as the homotopy
pullback
$$\xymatrix{
X\ar[r]\ar[d]&BK\pcom\ar[d]^{\Delta}\\
BK\pcom\ar[r]^(.4){(1,\psi^q)}&BK\pcom\times BK\pcom
}$$

\emph{If} Friedlander theorem were true for Kac-Moody groups, then this space $X$ should have the same mod
$p$ cohomology as the classifying space of the discrete Kac-Moody group $G_{\mathcal{D}}(\F_q)$ associated to the data
$\mathcal{D}$ consisting of the generalized Cartan matrix $A=\psMatrix2{-a}{-b}2$ and the matrix
$\psMatrix{n_1}{n_2}{m_1}{m_2}=\psMatrix1001$ (see section \ref{Tits}). Interestingly enough, this is \emph{not} true.

Let us choose the data $a,b,p,q$ such that the following holds. $p$ and $q$ are odd and relatively prime. $p$ does not
divide $ab(ab-4)$, $q\not\equiv\pm1\,(p)$ and $q^m\equiv1\,(p)$ where $m$ is the multiplicative order in $\F_{p^2}$ of
the roots of $X^2+(2-ab)X+1$. It is easy to see that these choices are possible. Under these hypothesis, the mod
$p$ cohomology of the classifying space of the (discrete) Kac-Moody group $G$ is trivial (Proposition \ref{trivial}).

The mod $p$ cohomology of the classifying space of the unitary, connected, simply
connected, Kac-Moody group $K=K(a,b)$ is computed in \cite{Kyoto}:
$$H^*(BK;\Fp)=\F_p[x_4,y_{2m}]\otimes E(z_{2m+1}).$$

Let $\psi^q\colon BK\to BK$ be an unstable Adams map as defined in \cite{Advances}. When restricted to a maximal torus of
$K$, $\psi^q$ is the $q$-power map. Let $X$ be defined by the homotopy pullback above. We will show that the mod $p$
cohomology of $X$ does not vanish in positive degrees.

Consider the Serre spectral sequence in mod $p$ cohomology for the fibration sequence
$$\xymatrix{K\ar[r]&BK\ar[r]^(.4){\Delta}&BK\times BK.}$$
The cohomology of the topological group $K$ is as follows (\cite{tesiNitu}):
$$H^*(K;\Fp)=E(\tilde{x}_3,\tilde{y}_{2m-1})\otimes\Gamma[\tilde{z}_{2m}].$$
($\Gamma$ denotes a divided power algebra.) Then, it is obvious that the elements $\tilde{x}_3$ and
$\tilde{y}_{2m-1}$ in the spectral sequence have to be transgressive to $x_4\otimes1-1\otimes x_4$ and
$y_{2m}\otimes1-1\otimes y_{2m}$, respectively. Then, pulling back by $(1,\psi^q)^*$ to the Serre spectral sequence for
$\xymatrix{K\ar[r]&X\ar[r]&BK}$ we see that in this spectral sequence the element $\tilde{y}_{2m-1}$ is transgressive
to $(1-q^m)y_{2m}=0$ and so it survives to $H^{2m-1}(X;\Fp)$.

A similar phenomenon happens also in the case $q\equiv\pm1\,(p)$.

This discrepancy between classifying spaces of infinite Kac-Moody groups over finite fields and the homotopy fixed
points of the corresponding Adams maps deserves further study.

\bibliographystyle{alpha}
\bibliography{biblio} 

\end{document}